\newcommand\Sym[1]{\mathfrak{S}_{#1}}
\newcommand\Rank[1]{\mathfrak{S}^{\prime}_{#1}}
\newcommand\Sn{\mathfrak{S}_{n}}
\newcommand\1[1]{\mathds{1}_{#1}}
\newcommand\n{\llbracket n \rrbracket}
\newcommand\nk{\llbracket k \rrbracket}
\newcommand{\supp}{\operatorname{supp}}
\newcommand{\Cycle}{\operatorname{Cycle}}
\newcommand{\cyc}{\operatorname{cyc}}
\newcommand{\Part}{\operatorname{Part}}
\theoremstyle{plain}
\newtheorem{theorem}{Theorem}
\newtheorem{corollary}{Corollary}
\newtheorem{lemma}{Lemma}
\newtheorem{proposition}{Proposition}
\theoremstyle{remark}
\newtheorem{example}{Example}
\newtheorem{remark}{Remark}
\theoremstyle{definition}
\newtheorem{definition}{Definition}
\newtheorem{algorithm}{Algorithm}
\begin{document}

\title{Multiresolution Analysis of Incomplete Rankings
\thanks{This work was supported by Agence Nationale de la Recherche (France) grant ANR-11-IDEX-0003-02.} }

\date{}

\author[1]{St\'ephan Cl\'emen\c{c}on}
\author[2]{J\'er\'emie Jakubowicz}
\author[1]{Eric Sibony\thanks{Corresponding author - email: eric.sibony@telecom-paristech.fr - postal address: Telecom ParisTech 37-39 rue Dareau, 75014 Paris, France.}}
\affil[1]{LTCI UMR No. 5141 Telecom ParisTech/CNRS\\ Institut Mines-Telecom, Paris, 75013, France}
\affil[2]{SAMOVAR UMR No. 5157 Telecom SudParis/CNRS\\ Institut Mines-Telecom, Paris, 75013, France}

\maketitle

\begin{abstract}
Incomplete rankings on a set of items $\{1,\; \ldots,\; n\}$ are orderings of the form $a_{1}\prec\dots\prec a_{k}$, with $\{a_{1},\dots a_{k}\}\subset\{1,\dots,n\}$ and $k < n$. Though they arise in many modern applications, only a few methods have been introduced to manipulate them, most of them consisting in representing any incomplete ranking by the set of all its possible linear extensions on $\{1,\; \ldots,\; n\}$. It is the major purpose of this paper to introduce a completely novel approach, which allows to treat incomplete rankings directly, representing them as injective words over $\{1,\; \ldots,\; n\}$. Unexpectedly, operations on incomplete rankings have very simple equivalents in this setting and the topological structure of the complex of injective words can be interpretated in a simple fashion from the perspective of ranking. We exploit this connection here and use recent results from algebraic topology to construct a multiresolution analysis and develop a wavelet framework for incomplete rankings. Though purely combinatorial, this construction relies on the same ideas underlying multiresolution analysis on a Euclidean space, and permits to localize the information related to rankings on each subset of items. It can be viewed as a crucial step toward nonlinear approximation of distributions of incomplete rankings and paves the way for many statistical applications, including preference data analysis and the design of recommender systems.\\

\noindent
\textbf{Keywords.} Incomplete rankings, Multiresolution Analysis, Wavelets, Injective Words.
\end{abstract}

\section{Introduction}
\label{intro}

Data expressing rankings or preferences have become ubiquitous in the Big Data era. Operating continuously on still more content, modern applications such as recommendation systems and search engines generate and/or exploit massive data of this nature. The design of statistical machine-learning algorithms, tailored to this type of data, is crucial to optimize the performance of such systems (\textit{e.g.} rank documents by degree of relevance for a specific query in information retrieval, propose a sorted list of items/products to a prospect she/he is most liable to buy in e-commerce). A well studied situation is when raw data are of the form of ``full rankings'' on a given set of items indexed by $\n = \{1,\; \ldots,\; n\}$ and are then described by permutations $\sigma$ on $\n$ that map an item to its rank, $a_1 \prec \ldots \prec a_n$ with $a_i=\sigma^{-1}(i)$ for $i\in \n$. The variability of observations is represented by a probability distribution $p$ on the set $\Sn$ of all the permutations on $\n$, which can be seen as an element of the space
\[
L(\Sn) = \{ f : \Sn \rightarrow \mathbb{R}\},
\]
such that $p(\sigma)\geq 0$ for all $\sigma\in\Sn$ and $\sum_{\sigma\in \Sn}p(\sigma)= 1$.
Though empirical estimation of $p$ may appear as a problem of disarming simplicity at first glance,  it is actually a great statistical challenge because the number of possible rankings (\textit{i.e.} $\Sn$'s cardinality) explodes as $n!$ with the number of instances to be ranked. Traditional methods in machine-learning and statistics quickly become either intractable or inaccurate in practice and many approaches have been proposed these last few years to deal with preference data and overcome these challenges in different situations (\textit{e.g.} \cite{FISS03}, \cite{HFCB08}, \cite{MM09}, \cite{HW09}, \cite{LM08}, \cite{HG09}, \cite{Sun2012}). Whatever the type of task considered (supervised, unsupervised), machine-learning algorithms generally rest upon the computation of statistical quantities such as averages or medians, summarizing/representing efficiently the data or the performance of a decision rule candidate applied to the data. However, summarizing ranking variability is far from straightforward and extending simple concepts such as an average or a median in the context of preference data raises a certain number of deep mathematical and computational problems, see \cite{Bar81}, and call for new constructions.

One approach, much documented in the literature, consists in exploiting the algebraic structure of the (noncommutative) group $\Sn$ and perform a harmonic analysis on $L(\Sn$), see for example \cite{Diaconis1988}, \cite{Rockmore2002}, \cite{Kondor2010}, \cite{Jagabathula2011}, \cite{Kakarala2011}. This corresponds to a decomposition of the form
\[
L(\Sn) \cong \bigoplus_{\lambda}d_{\lambda}S^{\lambda},
\]
where the $S^{\lambda}$'s are irreducible spaces invariant under the action of the translations $f\mapsto f(\sigma_{0}^{-1}.)$ for all $\sigma_{0}\in\Sn$, the $\lambda$'s correspond to ``frequencies'', and the $d_{\lambda}$'s are positive integers. The sign $\cong$ above means that the two spaces are isomorphic, the spaces $S^{\lambda}$ being not necessarily subspaces of $L(\Sn)$. This decomposition allows to localize the different spectral components of any function $f\in L(\Sn)$. Furthermore, it is possible to define a (partial) order on the $\lambda$'s that indicates the different level of ``smoothness'' of the elements of the corresponding $S^{\lambda}$'s (for instance, the smoothest component is the space of constant functions), thus providing a natural framework for linear approximation in $L(\Sn)$, see \cite{HGG09} or \cite{Irurozki2011}. This framework also extends to the analysis of full rankings with ties, referred to as \textit{bucket orders} (or \textit{partial rankings} sometimes): for $1\leq r \leq n$ and $\mu=(\mu_1,\; \ldots,\; \mu_r)\in \mathbb{N}^{*r}$ such that $\mu_{1} +\dots + \mu_{r} = n$, orderings of the type $a_{1,1},\; \ldots,\; a_{1,\mu_1}\prec \ldots \prec a_{r,1},\; \ldots,\; a_{r,\mu_r}$ described by mappings $\sigma: \n \rightarrow \{1,\; \ldots,\; r\}$ such that $\sigma^{-1}(\{i\})=\{a_{i,1},\; \ldots,\; a_{i,\mu_i} \}$ for any $i\in\{1,\; \ldots, \; r\}$. Among bucket orders, top-$k$ rankings received special attention. They correspond to orderings of the form $a_{1} \prec \dots  \prec a_{k} \prec \textit{the rest}$. The same notion of translation can be defined on the space $M^{\mu}$ of real-valued functions on partial rankings with fixed form $\mu = (\mu_{1}, \dots, \mu_{r})$, leading to a similar decomposition, called Young's rule,
\[
M^{\mu} \cong \bigoplus_{\lambda}K_{\lambda,\mu}S^{\lambda},
\]
where the $S^{\lambda}$'s are the same as before and the $K_{\lambda,\mu}$'s are integers $\geq 0$ called the Kotska numbers.

This ``$\Sn$-based'' harmonic analysis is however not suited for the analysis of ranked data of the form  $a_{1}\prec \dots \prec a_{k}$ with $k < n$, \textit{i.e.} when the rankings do not involve all the items. Such data shall be referred to as \textit{incomplete rankings} throughout the article. Indeed, though \cite{Kondor2010} provides a remarkable application of $\Sn$-based harmonic analysis to incomplete rankings, the decomposition into $\Sn$-based translation-invariant components is by essence inadequate to localize the information relative to incomplete rankings on specific subsets of items. Yet incomplete rankings arise in many modern applications (such as recommending systems), where the number of objects to be ranked is very high whereas preferences are generally observed for a small number of objects only. In statistical signal and image processing, novel harmonic analysis tools such as wavelet bases and their extensions have recently revitalized structured data analysis and lead to sparse representations and efficient algorithms for a wide variety of statistical tasks: estimation, prediction, denoising, compression, clustering, etc. Inspired by advances in computational harmonic analysis and its applications to high-dimensional data analysis, our goal is to develop new concepts and algorithms to handle preference data taking the form of incomplete rankings, in order to solve statistical learning problems, motivated by the applications aforementioned, such as efficient/sparse representation of rankings, ranking aggregation, prediction of rankings. More precisely, it is the purpose of this paper to extend the principles of wavelet theory and construct a multiresolution analysis tailored for the description of incomplete rankings.

Let us introduce some preliminary notations to be more specific. For a finite set $E$ of cardinality $\vert E\vert$ and $k\in\{0,\; \ldots,\; \vert E\vert\}$, we denote by $\binom{E}{k}$ the set of all subsets of $E$ with $k$ elements and we set $\mathcal{P}(E) = \bigcup_{j=2}^{\vert E\vert}\binom{E}{j}$. By definition, a ranking over a subset $A\in\mathcal{P}(\n)$ is described by a bijective mapping $\pi : A \rightarrow \{1, \dots, \vert A\vert\}$ that assigns to each item $a\in A$ its rank (with respect to $A$). The ensemble $\Rank{A}$ of such mappings can thus be viewed as the set of the incomplete rankings on $\n$ involving the items of $A$ solely. Notice that unless $A = \{1, \dots, k\}$ with $k\in\{2, \dots, n\}$, this set is different from  -- yet in one-to-one correspondence with -- the set $\Sym{A}$ of permutations on $A$, \textit{i.e.} bijective mappings $\tau : A\rightarrow A$. The variability of incomplete rankings is then represented by a family $(P_{A})_{A\in\mathcal{P}(\n)}$, where $P_{A}$ is a probability distribution on $\Rank{A}$. In order to guarantee that this family describes the distribution of the preferences of a statistical population, it is unavoidable to assume that the following ``projectivity'' property holds: for any $A = \{a_{1}, \dots, a_{k}\}\in\mathcal{P}(\n)$ with $k < n$ and $b\in \n\setminus A$,
\begin{multline}
\label{rationality}
P_{A}(a_{i_{1}}\prec \ldots\prec a_{i_{k}}) = P_{A\cup\{b\}}(a_{i_{1}}\prec \ldots\prec a_{i_{k}}\prec b) + P_{A\cup\{b\}}(a_{i_{1}}\prec \ldots\prec b\prec a_{i_{k}}) +\\ \ldots + P_{A\cup\{b\}}(a_{i_{1}}\prec b\prec \ldots\prec a_{i_{k}}) + P_{A\cup\{b\}}(b\prec a_{i_{1}}\prec \ldots\prec a_{i_{k}}).
\tag{$\ast$}
\end{multline}
It simply means that the probability of a ranking should be conserved when a new item is added. It is straightforward to see that this assumption is equivalent to that stipulating the existence of a probability distribution $p$ on $\Sn$ such that for all $A\in\mathcal{P}(\n)$,
\[
P_{A}(\pi) = \sum_{\sigma\in\Sn(\pi)}p(\sigma),
\]
where $\Sn(\pi)$ is the set of all the permutations $\sigma\in\Sn$ that extend $\pi$, \textit{i.e.} such that for all $(a,b)\in A^2$, $\pi(a) < \pi(b) \Rightarrow \sigma(a) < \sigma(b)$.  For a function $f\in L(\Sn)$, we define its ``marginal'' on the subset $A\in\mathcal{P}(\n)$ by $f_{A}(\pi) = \sum_{\sigma\in\Sn(\pi)}f(\sigma)$. Assumption \eqref{rationality} then states that the $P_{A}$'s are the marginals of a global probability distribution $p$ on $\Sn$. Now, in practical applications, incomplete rankings are not observed on all the subsets of $\mathcal{P}(\n)$ but only on a collection $\mathcal{A}\subset\mathcal{P}(\n)$, called the observation design, and the variability of the observed incomplete rankings is represented by the sub-family $(P_{A})_{A\in\mathcal{A}}$ of $(P_{A})_{A\in\mathcal{P}(\n)}$. Defining the linear operators

\vspace{-0.2cm}
\begin{tabular}{ccc}
\parbox{.3\linewidth}{
\begin{align*}
M_{A} : L(\Sn) &\rightarrow L(\Rank{A})\\
f &\mapsto f_{A}
\end{align*}
} & and &
\parbox{.5\linewidth}{
\begin{align*}
M_{\mathcal{A}} = \bigoplus_{A\in\mathcal{A}}M_{A} : L(\Sn) &\rightarrow \bigoplus_{A\in\mathcal{A}} L(\Rank{A})\\
f &\mapsto (f_{A})_{A\in\mathcal{A}},
\end{align*}
}
\end{tabular}
\vspace{-0.2cm}

\noindent
the analysis of preference data must then be performed in the space
\[
\mathbb{M}_{\mathcal{A}} = M_{\mathcal{A}}(L(\Sn)).
\]
Whereas the space $L(\Sn)$ has been thoroughly studied, $\mathbb{M}_{\mathcal{A}}$ has never been investigated in contrast. Defining an explicit basis for this space or even simply calculating its dimension is indeed far from obvious. Furthermore, unless $\mathcal{A}$ is of the form $\bigcup_{j\in J}\binom{\n}{j}$ with $J\subset\{2, \dots, n\}$, $\Sn$-based translations cannot be defined and $\Sn$-based harmonic analysis cannot previously cannot be applied. Instead, one needs a decomposition that localizes the information related to each subset of items (which is by nature not invariant under $\Sn$-based translations).

\subsection{Main contributions}

In this article, we construct for any $A\in\mathcal{P}(\n)$, the subspace $W_{A}$ of $L(\Sn)$ that localizes the information that is specific to marginals on $A$ and not to marginals on other subsets. Denoting by $\psi_{0}$ the constant function in $L(\Sn)$ equal to $1$ and by $V^{0} = \mathbb{R}\psi_0$ the subspace of constant functions, the major contribution of the present paper is to establish the linear decomposition
\begin{equation}
\label{general-decomposition}
L(\Sn) = V^{0}\oplus\bigoplus_{B\in\mathcal{P}(\n)}W_{B}.
\end{equation}
Notice that this decomposition is an equality and not an isomorphism, because the $W_{B}$'s are subspaces of $L(\Sn)$. Denoting by $\ker M$ the null space of any linear operator $M$, our construction of the spaces $W_{B}$ then allows to localize the information of the marginal on any subset $A\in\mathcal{P}(\n)$ via
\begin{equation*}
L(\Sn) = \ker M_{A} \oplus \left[V^{0}\oplus\bigoplus_{B\in\mathcal{P}(A)}W_{B}\right],
\end{equation*}
and more generally the information of the marginals on the subsets of any collection $\mathcal{A}$ via
\begin{equation*}
L(\Sn) = \ker M_{\mathcal{A}} \oplus \left[V^{0}\oplus\bigoplus_{B\in\bigcup_{A\in\mathcal{A}}\mathcal{P}(A)}W_{B}\right].
\end{equation*}
This last decomposition gives the multiresolution decomposition of the space $\mathbb{M}_{\mathcal{A}}$
\begin{equation}
\label{observation-decomposition}
\mathbb{M}_{\mathcal{A}} = M_{\mathcal{A}}\left(V^{0}\right)\oplus\bigoplus_{B\in\bigcup_{A\in\mathcal{A}}\mathcal{P}(A)}M_{\mathcal{A}}\left(W_{B}\right),
\end{equation}
where $M_{\mathcal{A}}\left(V^{0}\right)$ is the component related to constant functions and for each $B\in\bigcup_{A\in\mathcal{A}}\mathcal{P}(A)$, $M_{\mathcal{A}}\left(W_{B}\right)$ is the component that localizes the information specific to the marginal on $B$. Our result relies on recent advances in algebraic topology about the homological structure of the complex of injective words established in \cite{Reiner13}. We call the decomposition \eqref{general-decomposition} a ``multiresolution decomposition'' because the subspaces localize meaningful parts of the global information of incomplete rankings at different ``scales''. We nonetheless draw attention on the fact that this decomposition is not orthogonal (as we shall see in section \ref{sec:wavelet}) and it is not a ``multiresolution analysis'' in the strict sense. Indeed, the discrete nature of $\Sn$ does not allow to define any dilation operator. However, as shall be seen later in the paper, translation and "dezooming" operators can still be defined to reinforce the analogy between our construction and standard multiresolution analysis, see subsection \ref{subsec:multiresolution-analysis}.

In order to use this decomposition to perform approximation in $\mathbb{M}_{\mathcal{A}}$ in practice, one needs an explicit basis for each space $M_{\mathcal{A}}\left(W_{B}\right)$. The effective construction of such an explicit basis is far from being obvious, because each space $W_{B}$ is defined by many linear constraints based on the complex combinatorial structure of $\Sn$. However, this problem can be related to that of constructing a basis for the homology of certain types of simplicial complexes (namely boolean complexes of Coxeter systems), for which a solution was recently established in \cite{RT11}. Here we adapt the results from \cite{RT11} to exhibit an explicit basis $\Psi_{B}$ for each space $W_{B}$. The concatenated family $\Psi= \{\psi_{0}\}\cup\bigcup_{B\in\mathcal{P}(\n)}\Psi_{B}$ is then a basis of $L(\Sn)$ adapted to the multiresolution decomposition \eqref{general-decomposition}, which shall be referred to as a \textit{wavelet basis} here. From the basis $\Psi$, one obtains, for any collection $\mathcal{A}$ of subsets of $\n$, the wavelet basis
\[
\{ M_{\mathcal{A}}(\psi_{0}) \}\cup \bigcup_{B\in\bigcup_{A\in\mathcal{A}}\mathcal{P}(A)} \{ M_{\mathcal{A}}(\psi) \}_{\psi\in\Psi_{B}},
\]
adapted to the multiresolution decomposition \eqref{observation-decomposition} of the space $\mathbb{M}_{\mathcal{A}}$. Again we draw attention on the fact that $\Psi$ is not a wavelet basis in the strict sense, obtained from the dilations and translations of a ``mother wavelet'', because of the nature of decomposition \eqref{general-decomposition}. It happens however that the choice of the algorithm adapted from \cite{RT11} to generate each $\Psi_{B}$ for $B\in\mathcal{P}(\n)$ leads to a global structure for $\Psi$ encoded in two general relations, strengthening the analogy with classic wavelet bases, see subsection \ref{subsec:structure-wavelet-basis}.

\subsection{Related work}

To the best of our knowledge, only three approaches are documented in the literature to analyze incomplete rankings. The first method is based on the Luce-Plackett model (see \cite{Luce59}, \cite{Plackett75}), the sole parametric statistical model on the group of permutations that can be straightforwardly extended to incomplete rankings. It relies on a strong assumption, referred to as \textit{Luce's choice axiom}, which reduces the complexity of the model, encapsulated by $n$ parameters only (contrasting with the cardinality of $\Sn$). It has been used in a wide variety of applications and several algorithms have been proposed to infer its parameters, see \cite{Hunter04} or \cite{AS13} for instance. Several numerical experiments on real datasets have shown however that its capacity to fit real data is limited, the model being too rigid to handle singularities observed in practice, refer to \cite{Marden96} and \cite{Sun2012}. The two other approaches are non-parametric kernel methods. The one proposed in \cite{Kondor2010} is a diffusion kernel in the Fourier domain, and the one proposed in \cite{Sun2012} is a triangular kernel with respect to the Kendall's tau distance. Though leading to efficient algorithms, both approaches deal with sets $\Sn(\pi)$'s and not directly with incomplete rankings $\pi$'s. This tends to blend the estimated probabilities of the incomplete rankings and thus induces a statistical bias. In contrast, our framework relies on the natural multiresolution structure of incomplete rankings and is the first to allow the definition of approximation procedures directly on this type of ranked data.

We point out that an alternative construction of a multiresolution analysis on $L(\Sn)$ has already been proposed in \cite{Kondor2012}. It is a first breakthrough to deal with singularities of probability distributions on rankings, however it entirely relies on the algebraic structure of $\Sn$. It may be thus viewed as a refinement of harmonic analysis for full or bucket rankings, but does not apply efficiently to the analysis of incomplete rankings. Several approaches have been proposed to generalize the construction of multiresolution analysis and wavelet bases on discrete spaces, mostly on trees and graphs, see for instance \cite{Coifman06}, \cite{Gavish2010}, \cite{Hammond2011}, \cite{REC11} and \cite{Rustamov11}. None of them leads however to the construction for incomplete rankings we promote in this paper, which crucially relies on the topological properties of the complex of injective words.

The use of topological tools to analyze ranked data has been introduced in \cite{JLYY11} and then pursued in several contributions such as in \cite{DSS2012} or \cite{OBO13}. Their approach consists in modeling a collection of pairwise comparisons as an oriented flow on the graph with vertices $\n$ where two items are linked if the pair appears at least once in the comparisons. They show that this flow admits a ``Hodge decomposition'' in the sense that it can be decomposed as the sum of three components, a ``gradient flow'' that corresponds to globally consistent rankings, a ``curl flow'' that corresponds to locally inconsistent rankings, and a ``harmonic flow'', that corresponds to globally inconsistent but locally consistent rankings. Our construction also relies on results from topology but it decomposes the information in a quite different manner, and is tailored to the situation where incomplete rankings can be of any size.

\subsection{Outline of the paper}

The article is structured as follows. In section \ref{sec:localization}, the mathematical formalism that gives a rigorous definition for the concept of information localization is introduced. It is explained how group-based harmonic analysis fits in this framework and why it is not adapted to localize information related to incomplete rankings, and the analysis of the latter is formulated in the setting of injective words. Section \ref{sec:multiresolution-decomposition} contains our major contribution: the spaces $W_{A}$ are constructed and the multiresolution decomposition of $L(\Sn)$ in function of these spaces is exhibited. These results are interpreted in terms of multiresolution analysis and the connection with group-based harmonic analysis is thoroughly discussed. In section \ref{sec:wavelet}, we construct an explicit wavelet basis adapted to the multiresolution decomposition thus built. We establish its main properties and investigate its mathematical structure. Some concluding remarks are collected in section \ref{sec:perspectives}, where several lines of further research are also sketched. Technical proofs are deferred to the Appendix section.

\section{Information localization} \label{sec:localization}

It is the purpose of this section to define concepts which the subsequent analysis fully rests on, while giving insights into the relevance of our construction.

\subsection{Notations}
\label{subsec:notations}

Here an throughout the article, the inclusion between two sets is denoted by $\subset$, the strict inclusion by $\subsetneq$ and the disjoint union by $\sqcup$. Given a finite set $E$, denote by $L(E) = \{ f : E \rightarrow \mathbb{R} \}$ the $\vert E\vert$-dimensional Euclidean space of real valued functions on $E$ equipped with the canonical inner product defined by $\left\langle f, g\right\rangle = \sum_{x\in E}f(x)g(x)$ for any $(f,g)\in L(E)^{2}$. We denote by $\delta_{x}$ the Dirac function at any point $x\in E$ and by $\1{S}$ the indicator function of any $S\subset E$. A partition of $E$ is a collection of nonempty pairwise disjoint subsets $\{ S_{1}, \dots, S_{r}\}$ such that $\bigsqcup_{i=1}^{r}S_{i} = E$.

\subsection{Localizing information through marginals}
\label{subsec:marginal-localization}

Let $\mathcal{X}$ and $\mathcal{Y}$ be two finite sets with $\vert\mathcal{Y}\vert \leq \vert\mathcal{X}\vert$ and $\Pi$ be a mapping $\Pi : \mathcal{X} \rightarrow \mathcal{Y}$. The image of a probability distribution $p$ on $\mathcal{X}$ by $\Pi$ is denoted by $p_{\Pi}$. It is the probability distribution on $\mathcal{Y}$ defined by $p_{\Pi}(y) = \sum_{x\in\Pi^{-1}(\{y\})}p(x)$. If $p$ is the probability distribution of a random variable $X$ on $\mathcal{X}$, then $p_{\Pi}$ is the probability distribution of the random variable $\Pi(X)$ on $\mathcal{Y}$, and for $y\in\mathcal{Y}$, $p_{\Pi}(y)$ is the probability that $\Pi(X) = y$. It is straightforward to see that the two following conditions on $\Pi$ are equivalent:
\begin{enumerate}
	\item $\Pi$ is surjective on $\mathcal{Y}$ and the value of $\vert\Pi^{-1}(\{y\})\vert$ is the same for all $y\in\mathcal{Y}$.
	\item The image by $\Pi$ of the uniform distribution on $\mathcal{X}$ is the uniform distribution on $\mathcal{Y}$, \textit{i.e.} if $p(x) = 1/\vert\mathcal{X}\vert$ for all $x\in\mathcal{X}$ then $p_{\Pi}(y) = 1/\vert\mathcal{Y}\vert$ for all $y\in\mathcal{Y}$.
\end{enumerate}
We assume that these conditions are satisfied in the sequel and call the mapping $\Pi$ a ``marginal transformation''. For $p$ a probability distribution on $\mathcal{X}$, we call $p_{\Pi}$ the ``marginal of p associated to $\Pi$''. More generally, for any function $f\in L(\mathcal{X})$, its marginal associated to $\Pi$, denoted by $f_{\Pi}\in L(\mathcal{Y})$, is defined by
\[
f_{\Pi}(y) = \sum_{x\in\Pi^{-1}(\{y\})}f(x).
\]
If the function $f$ represents a signal over the space $\mathcal{X}$ such as a probability distribution or a discrete image for example, the idea is to interpret $f_{\Pi}$ as the degraded version of $f$ obtained when we observe it through the transformation $\Pi$. It is "degraded" in the sense that $\Pi$ being not injective in general (it can be injective only if $\vert\mathcal{Y}\vert = \vert\mathcal{X}\vert$, and in this case it is isomorphic to the identity transform), $f_{\Pi}$ is an averaged version of $f$ and therefore ``contains less information''. Without being specific about any information measure, the uniform probability distribution can be naturally interpreted as that containing no information, \textit{i.e.} the ``less localized''. The assumption made on $\Pi$ implies that if the original signal $f$ on $\mathcal{X}$ contains no information, then its degraded version on $\mathcal{Y}$ still contains no information.

With a marginal transformation $\Pi$ is naturally associated the marginal operator
\begin{align*}
M_{\Pi} : L(\mathcal{X}) &\rightarrow L(\mathcal{Y})\\
f &\mapsto f_{\Pi}.
\end{align*}
Notice that $M_{\Pi}$ is not a projection because $L(\mathcal{Y})$ is not a subspace of $L(\mathcal{X})$ (unless $\mathcal{Y} = \mathcal{X}$).

If $V$ is a supplementary subspace of $\ker M_{\Pi}$ in $L(\mathcal{X})$, and $f = f_{\ker M_{\Pi}} + f_{V}$ is the corresponding decomposition of a function $f\in L(\mathcal{X})$, one has immediately $f_{\Pi} = M_{\Pi}(f) = M_{\Pi}(f_{V})$. We can thus claim that $f_{V}$ provides the same amount of information as $f_{\Pi}$. This means that data analysis on the space $L(\mathcal{Y})$ can be done equivalently on any supplementary space of $\ker M_{\Pi}$ in $L(\mathcal{X})$. The most natural choice is then surely the orthogonal supplementary $(\ker M_{\Pi})^{\perp}$, because the latter is exactly the space of functions in $L(\mathcal{X})$ that are constant on each $\Pi^{-1}(\{y\})$ for $y\in\mathcal{Y}$ (the proof is straightforward and left to the reader).

In practice however, the signal $f$ is observed through a finite family of marginal transformations $(\Pi_{i})_{i\in I}$, and we would like to ``localize'' as much as possible the information related to a specific transformation $\Pi$ and not to the others. For two marginal transformations $\Pi_{1}$ and $\Pi_{2}$, we say that the subspace $W_{1}$ of $L(\mathcal{X})$ ``fully localizes'' the information related to $\Pi_{1}$ with respect to $\Pi_{2}$ if it satisfies the two conditions listed below:
\begin{itemize}
	\item $W_{1}\cap\ker M_{\Pi_{1}} = \{0\}$ (it localizes information related to $\Pi_{1}$),
	\item $W_{1}\subset\ker M_{\Pi_{2}}$ (it localizes information that is not contained in $\Pi_{2}$).
\end{itemize}
Notice that there is no reason that $(\ker M_{\Pi_{1}})^{\perp}$ satisfies the latter condition for any marginal transformation $\Pi_{2}$.

In the general case, the definition of a space that localizes the information related to a marginal transformation $\Pi$ with respect to the others depends on the relations between all the transformations of the considered family. One particularly important relation is the \textit{refinement}. We say that the transformation $\Pi_{2}$ is a refinement of $\Pi_{1}$ if $\ker M_{\Pi_{2}} \subset \ker M_{\Pi_{1}}$. In that case, there exists a surjective linear mapping from the image of $\Pi_{2}$ to the image of $\Pi_{1}$, and we can say that $\Pi_{1}$ degrades the information more than $\Pi_{2}$ in the sense that the information related to $\Pi_{1}$ can be recovered from the marginal associated to $\Pi_{2}$ (through this surjective linear mapping) whereas the opposite is not true.

\subsection{Group-based harmonic analysis on $\Sn$}
\label{subsec:group-based-harmonic-analysis}

When the original signal space $\mathcal{X}$ is a finite group $G$, we can consider marginal transformations defined through its actions (see the Appendix section for some background in group theory). Let $\mathcal{Y}$ be a finite set on which $G$ acts transitively, by $(g,y)\mapsto g\cdot y$. To each $y_{0}\in\mathcal{Y}$, we associate the marginal transformation
\begin{align*}
\Pi_{y_{0}} : G &\rightarrow \mathcal{Y}\\
g &\mapsto g\cdot y_{0}.
\end{align*}
It satisfies the two conditions of a marginal transformation. First, $\Pi_{y_{0}}$ is surjective on $\mathcal{Y}$ because the action of $G$ on $\mathcal{Y}$ is transitive. Second, for $y\in\mathcal{Y}$, the set $\Pi_{y_{0}}^{-1}(\{y\}) = \{g\in G \;\vert\; g\cdot y_{0} = y \}$ is a left coset of the stabilizer of $y_{0}$, $\{g\in G \;\vert\; g\cdot y_{0} = y_{0} \}$, and thus have same cardinality. The interpretation behind this marginal transformation is as follows. If $p$ is a probability distribution on $G$, then $p(g)$ is the probability of drawing the element $g$ in $G$ and $p_{\Pi_{y_{0}}}(y)$ is the probability of drawing an element $g\in G$ such that $g\cdot y_{0} = y$. For a function $f\in L(G)$, the collection of all its marginals associated to the transformations $\Pi_{y_{0}}$ for $y_{0}\in\mathcal{Y}$ can be gathered in the $\vert\mathcal{Y}\vert$-squared matrix $T_{\mathcal{Y}}(f)$ defined by
\[
\left[T_{\mathcal{Y}}(f)\right]_{y,y_{0}} = f_{\Pi_{y_{0}}}(y),
\]
each column representing a marginal. Now, by linearity, $T_{\mathcal{Y}}(f) = \sum_{g\in G}f(g)T_{\mathcal{Y}}(\delta_{g})$ for all $f\in L(G)$, and it is easy to see that for $g\in G$, $T_{\mathcal{Y}}(\delta_{g})$ is actually the translation operator on $L(\mathcal{Y})$, \textit{i.e.} for all $F\in L(\mathcal{Y})$ and $y\in\mathcal{Y}$,
\[
\left(T_{\mathcal{Y}}(\delta_{g})F\right)(y) = F(g^{-1}\cdot y).
\]
In other words, $g \mapsto T_{\mathcal{Y}}(\delta_{g})$ is the permutation representation of $G$ on $L(\mathcal{Y})$ associated to the action $(g,y)\mapsto g\cdot y$. This means that the information contained in the collection of marginals $(\Pi_{y_{0}})_{y_{0}\in\mathcal{Y}}$ can be decomposed using group representation theory, which is the general principle of harmonic analysis. Harmonic analysis on a finite group $G$ is defined as the decomposition of $L(G)$ into irreducible representations of $G$, see \cite{Diaconis1988}. These components are invariant under translation and each localize the information related to a specific ``frequency''. The symmetric group has an additional particularity: each of its irreducible representations is ``associated to'' one specific meaningful permutation representation and thus allows to localize the information of the associated marginals. Let us develop this interpretation.

The symmetric group $\Sn$ is the set of all the bijective mappings $\sigma : \n \rightarrow \n$ equipped with the composition law $(\sigma,\tau)\mapsto \sigma\tau$ defined by $\sigma\tau (i) = \sigma(\tau(i))$ for $i\in \n$. Irreducible representations of $\Sn$, are indexed by partitions of $n$, \textit{i.e.} tuples $\lambda = (\lambda_{1}, \dots, \lambda_{r})\in\n^{r}$ such that $\lambda_{1} \geq \dots \geq \lambda_{r}$ and $\lambda_{1} + \dots + \lambda_{r} = n$, with $r\in\{1, \dots n\}$. The irreducible representation indexed by $\lambda$ is denoted by $S^{\lambda}$, and its dimension by $d_{\lambda}$. The harmonic decomposition of $L(\Sn)$ thus writes
\[
L(\Sn) \cong \bigoplus_{\lambda\,\vdash\, n} d_{\lambda}S^{\lambda},
\]
where $\lambda\vdash n$ means that the sum is taken on all the partitions $\lambda$ of $n$ (see the Appendix section). The spaces $S^{\lambda}$ are called the Specht modules. For $\mathcal{B} = (B_{1}, \dots, B_{r})$ an ordered partition of $\n$ and $\sigma\in \Sn$, we set
\[
\sigma\cdot\mathcal{B} = (\sigma(B_{1}), \dots, \sigma(B_{r})),
\]
where $\sigma(B) = \{\sigma(b) \;\vert\; b\in B\}$, for $B\subset\n$. This defines an action of $\Sn$ on the set of all ordered partitions of $\n$. The shape of an ordered partition of $\n$ $\mathcal{B} = (B_{1}, \dots, B_{r})$ is the tuple $(\vert B_{1}\vert, \dots, \vert B_{r}\vert)$. It is easy to see that the orbits of $\Sn$ are the sets of ordered partitions of $n$ of a given shape. For $\lambda \vdash n$, we denote by $\Part_{\lambda}(\n)$ be the set of ordered partitions of $\n$ of shape $\lambda$, and define $M^{\lambda} = L(\Part_{\lambda}(\n))$, called a Young module. In this context, the marginal transformation associated to a $\mathcal{B}_{0}\in\Part_{\lambda}(\n)$ is defined by
\begin{align*}
\Pi_{\mathcal{B}_{0}} : \Sn &\rightarrow \Part_{\lambda}(\n)\\
\sigma &\mapsto \sigma\cdot\mathcal{B}_{0}.
\end{align*}
The marginal of a function $f\in L(\Sn)$ associated to this transformation is denoted by $f_{\mathcal{B}_{0}}$ and is called a $\lambda$-marginal. The $\vert\Part_{\lambda}(\n)\vert$-square matrix $T_{\lambda}(f)$ that gathers all the $\lambda$-marginals of $f$ is equal to the sum $\sum_{\sigma\in\Sn}f(\sigma)T_{\lambda}(\delta_{\sigma})$, where $T_{\lambda}(\delta_{\sigma})$ is the matrix of the permutation representation of $\Sn$ on $M^{\lambda}$ taken in $\sigma$. Now it happens that the Specht module $S^{\lambda}$ can be defined as a subspace of $M^{\lambda}$, see \cite{Diaconis1988}. This means that the information localized by $S^{\lambda}$ in the harmonic decomposition of $L(\Sn)$ is contained in the $\lambda$-marginals. This information is also specific to $\lambda$-marginals in a certain way.

The dominance order on partitions of $n$ is the partial order defined, for $\lambda = (\lambda_{1}, \dots, \lambda_{r})$ and $\mu = (\mu_{1}, \dots, \mu_{s})$ by $\lambda \unrhd \mu$ if for all $j\in \{1, \dots, r\}$, $\sum_{i=1}^{j}\lambda_{i} \geq \sum_{i=1}^{j}\mu_{i}$. When $\lambda \unrhd \mu$ and $\lambda\neq\mu$ we write $\lambda\rhd\mu$. The decomposition of the Young module $M^{\lambda}$ for $\lambda\vdash n$ is given by Young's rule (see \cite{Diaconis1988}):
\[
M^{\lambda} \cong \bigoplus_{\mu\,\vdash\, n}K_{\mu,\lambda}S^{\mu}, \qquad\text{where}\quad  \left\{
\begin{aligned}
K_{\mu,\lambda} &= 0\quad \text{if } \mu\lhd\lambda\\
K_{\lambda,\lambda} &= 1\\
K_{\mu,\lambda} &\geq 1\quad \text{if } \mu \rhd \lambda\\
\end{aligned}\right. ,
\]
\textit{i.e.}
\[
M^{\lambda} \cong S^{\lambda}\oplus\bigoplus_{\mu \rhd \lambda}K_{\mu,\lambda}S^{\mu}.
\]
This means that for a given $\lambda \vdash n$, the Specht module $S^{\lambda}$ localizes the information of $M^{\lambda}$ that is not contained in the $M^{\mu}$'s for $\mu \rhd \lambda$. In this sense, $S^{\lambda}$ contains the information that is specific to $\lambda$-marginals and not the others.

\subsection{``Absolute'' and ``relative'' rank information}
\label{subsec:absolute-relative-rank-information}

The precedent subsection shows that harmonic analysis on $\Sn$ consists in localizing information specific to collections of marginal transformations $(\Pi_{\mathcal{B}})_{\mathcal{B}\in\Part_{\lambda}(\n)}$, for $\lambda \vdash n$. Let $p$ be a probability distribution on $\Sn$ and $\Sigma$ a random permutation of law $p$. If $S\subset\n$ represents an event $\mathcal{E}$ on the random variable $X$, we denote by $\mathbb{P}[\mathcal{E}]$ the probability of this event, \textit{i.e.} $\mathbb{P}[\mathcal{E}] = \sum_{\sigma\in S}p(\sigma)$. For $\lambda = (\lambda_{1}, \dots, \lambda_{r})\vdash n$ and $\mathcal{B} = (B_{1}, \dots, B_{r})\in\Part_{\lambda}(\n)$, we have for any $\mathcal{B}^{\prime} = (B^{\prime}_{1}, \dots, B^{\prime}_{r})\in\Part_{\lambda}(\n)$,
\[
p_{\mathcal{B}}(\mathcal{B}^{\prime}) = \mathbb{P}\left[\Sigma(B_{1}) = B^{\prime}_{1},\dots, \Sigma(B_{r}) = B_{r}^{\prime}\right].
\]
To gain more insight into the interpretation of these marginals, let us consider first the simple case $\lambda = (n-1, 1)$. Ordered partitions of $\n$ of shape $(n-1,1)$ are necessarily of the form $(\n\setminus\{i\}, \{i\})$, with $i\in\n$. Then for $(i,j)\in\n^{2}$, we have the simplification
\[
\mathbb{P}\left[\Sigma(\n\setminus\{i\}) = \n\setminus\{j\},\ \Sigma(\{i\}) = \{j\}\right] = \mathbb{P}\left[\Sigma(i) = j\right].
\]
The marginal of $p$ associated to $(\n\setminus\{i\}, \{i\})$ is thus the probability distribution $(\mathbb{P}[\Sigma(i) = j])_{j\in\n}$ on $\n$. From a ranking point of view, this is the law of the rank of item $i$. The matrix $T_{(n-1,1)}(p)$ that gathers all the $(n-1,1)$-marginals of $p$ is given by
\[
T_{(n-1,1)}(p) = \left(
\begin{array}{ccc}
\mathbb{P}[\Sigma(1) = 1] & \cdots & \mathbb{P}[\Sigma(n) = 1]\\
\vdots & \ddots & \vdots \\
\mathbb{P}[\Sigma(1) = n] & \cdots & \mathbb{P}[\Sigma(n) = n]\\
\end{array}\right),
\]
where the marginal of $p$ associated to $(\n\setminus\{i\}, \{i\})$ is represented by column $i$. It is easy to see that this matrix is bistochastic, and that the row $i$ represents the probability distribution $(\mathbb{P}[\Sigma^{-1}(i) = j])_{j\in\n}$ on $\n$. This is the probability distribution of the index of the element ranked at the $i^{th}$ position. In both cases, the distribution captures the information about an ``absolute rank'', in the sense that it is the rank of an item inside a ranking implying all the $n$ items. Either we consider the distribution of the absolute ranks of a fixed item $i$, or else we consider the distribution of the item having a fixed absolute rank $i$.

More generally for $k\in\{1, \dots, n-1\}$, elements of $\Part_{(n-k,k)}(\n)$ are of the form $(\n\setminus A, A)$ with $A\in\binom{\n}{k}$, and the marginal law of $p$ associated to $(\n\setminus A, A)$ is the probability distribution $(\mathbb{P}[\Sigma(A) = B])_{\vert B\vert = k}$ on $\binom{\n}{k}$. From a ranking point of view, $\mathbb{P}[\Sigma(A) = B]$ is the probability that the items of $A$ are ranked at the absolute positions of $B$, regardless of their order inside these positions. In the general case, for $\lambda = (\lambda_{1}, \dots, \lambda_{r})\vdash n$, $\mathcal{B} = (B_{1}, \dots, B_{r})\in\Part_{\lambda}(\n)$ and $\mathcal{B}^{\prime} = (B^{\prime}_{1}, \dots, B^{\prime}_{r})\in\Part_{\lambda}(\n)$, $\mathbb{P}[\Sigma\cdot\mathcal{B} = \mathcal{B}^{\prime}]$ is the probability that the items of $B_{i}$ are ranked at the absolute positions of $B^{\prime}_{i}$, for $i\in\{1, \dots, r\}$.

\begin{example}
We give an illustration of this type of marginals on a real dataset with $n = 4$, studied in \cite{Diaconis1998}. It is composed of $2262$ answers of German citizens who were asked to rank the desirability of four political goals, that we consider as items $1$, $2$, $3$ and $4$. Each ranking of these four items received a certain number of votes. Normalizing by $2262$, the total number of votes,we obtain a probability distribution $p$ on $\Sym{4}$. It is represented in figure \ref{fig:German-full}, where the $x$-axis represents the $24$ elements of $\Sym{4}$, denoted by $a_{1}a_{2}a_{3}a_{4}$ instead of $a_{1}\prec a_{2}\prec a_{3}\prec a_{4}$ and ordered by the lexicographic order, \textit{i.e.} $1234, 1243, ..., 4321$.

\begin{figure}[h!]
\centering
\includegraphics[scale=0.5]{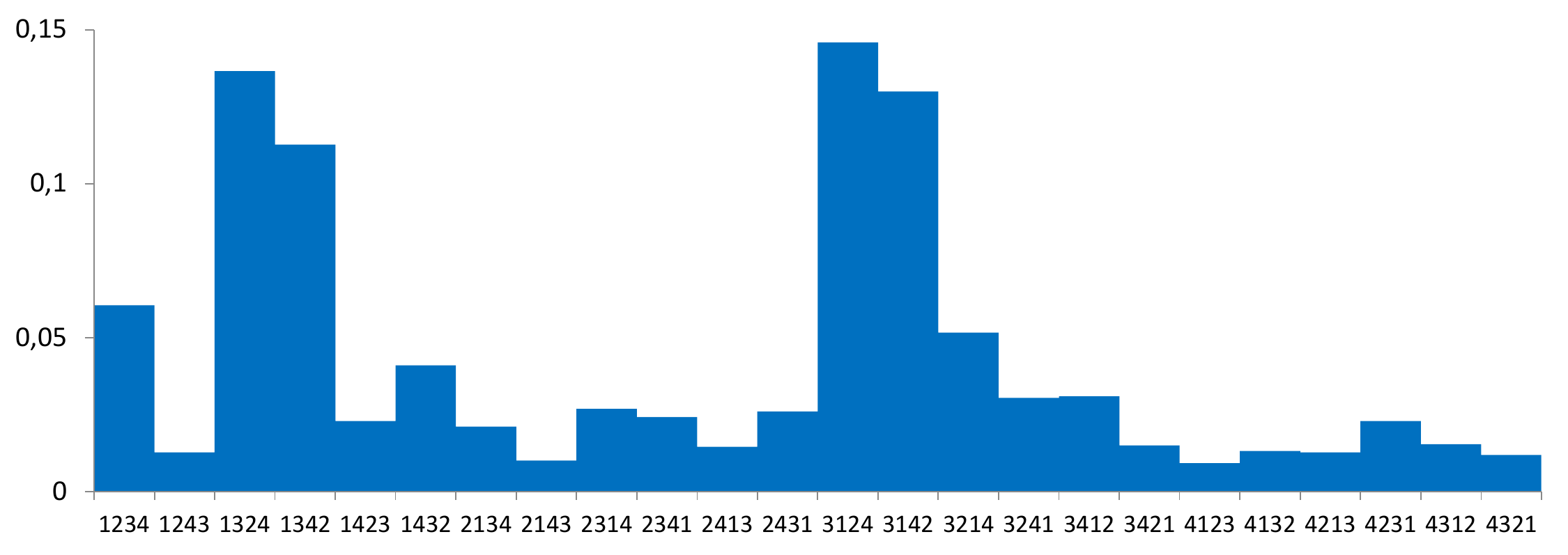}
\caption{Probability distribution $p$ of the rankings of the four political goals}
\label{fig:German-full}
\end{figure}

There are $5$ partitions of $4$: $(4), (3,1), (2,2), (2,1,1), (1,1,1,1)$. There is only one $(4)$-marginal, it is the constant $\sum_{\sigma\in\Sym{4}}p(\sigma) = 1$, and there are $24$ $(1,1,1,1)$-marginals, the translations of $p$. We consider the marginals of the three other types. Let $\Sigma$ be a random permutation of law $p$. The $(3,1)$ marginals are the laws of the random variables $\Sigma(i)$, representing the rank of item $i$, for $i\in\llbracket 4\rrbracket$. The $(2,2)$-marginals are the laws of the random variables $\{\Sigma(i), \Sigma(j)\}$ and the $(2,1,1)$ are the laws of the random variables $(\Sigma(i), \Sigma(j))$, for $(i,j)\in\n^{2}$ with $i\neq j$. All these marginals are represented in figure \ref{fig:German-absolute-marginals} (with different scales).

\begin{figure}[h!]
\centering
\includegraphics[scale=0.45]{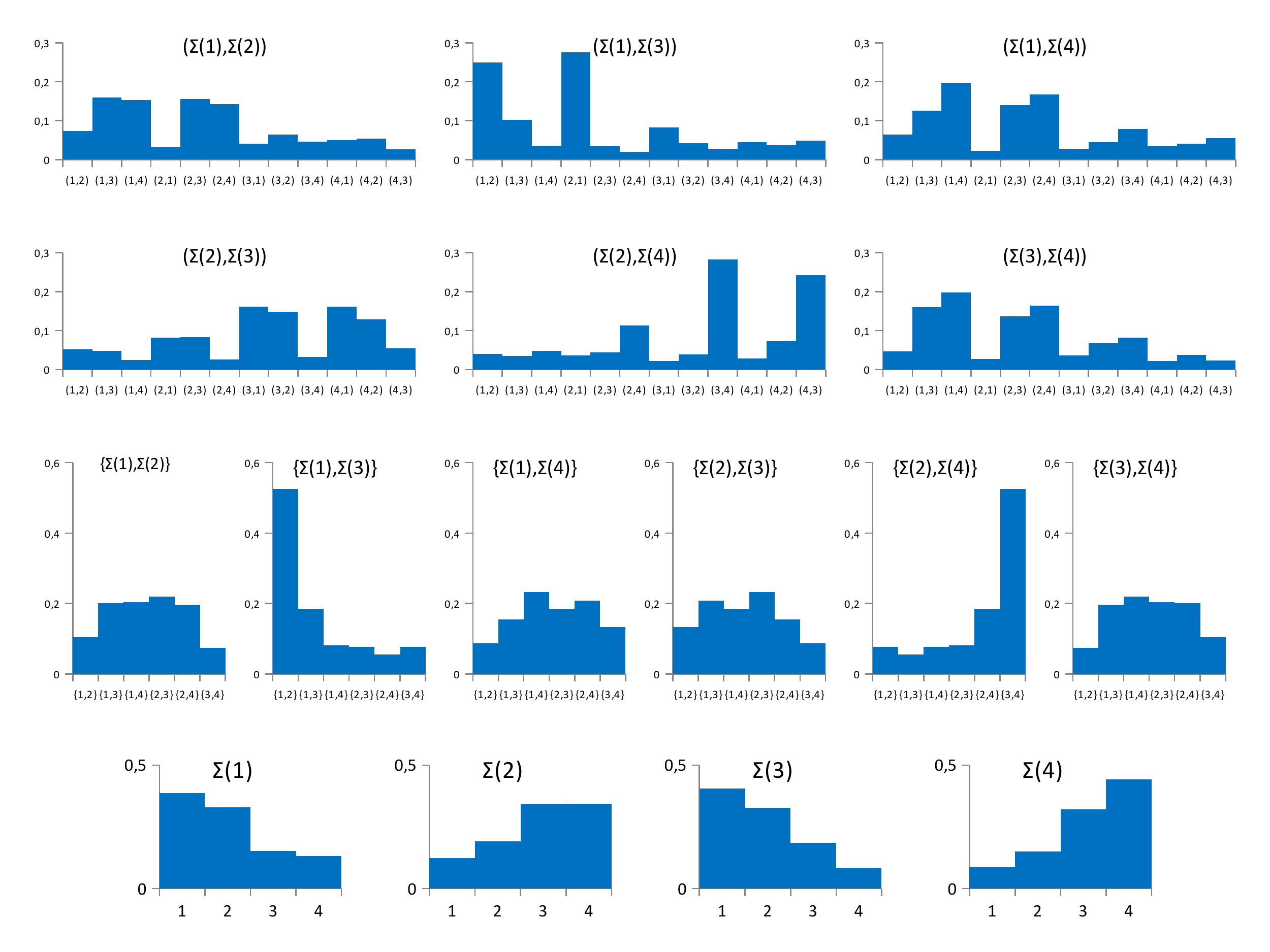}
\caption{Absolute $\lambda$-marginals of $p$, for $\lambda = (3,1), (2,2), (2,1,1)$}
\label{fig:German-absolute-marginals}
\end{figure}

\end{example}

In the analysis of incomplete rankings, we are not interested in absolute rank information, but in ``relative'' rank information. When incomplete rankings are observed on a subset of items $A\in\mathcal{P}(\n)$, the information we have access to is about the ranks of the items of $A$ relatively to $A$. In the same way, the prediction of a ranking on $A$ only involves the information about the ranks of the items of $A$ relatively to $A$. This is the fundamental difference between the analysis of full rankings or bucket orders (also called partial rankings) and the analysis of incomplete rankings. This implies that the marginal transformations and the information localization involved are completely different. Let $\Sigma$ be a random permutation of law $p$ on $\Sn$. In the analysis of incomplete rankings, we are interested in probabilities of the form
\[
\mathbb{P}[\Sigma(a_{1}) < \dots < \Sigma(a_{k})],
\]
for $k\in\{2, \dots, n\}$ and $\{a_{1}, \dots, a_{k}\}\subset\n$. So we are not interested in the values $\Sigma(a_{1}), \dots, \Sigma(a_{k})$, but only in their ordering, which induce a ranking of the items $a_{1}, \dots, a_{k}$. We are thus interested in the marginals $p_{A}$ of $p$ defined in the Introduction section, for $A\in\mathcal{P}(\n)$.

\begin{example}
Considering the same example as before, we represent all the marginals of $p$ involved in the analysis of incomplete rankings. For each $A\in\mathcal{P}(\llbracket 4\rrbracket)$, the marginal $p_{A}$ is represented in figure \ref{fig:German-relative-marginals} by a graph with the $x$-axis constituted of the elements of $\Rank{A}$ ordered by the lexicographic order.

\begin{figure}[h!]
\centering
\includegraphics[scale=0.38]{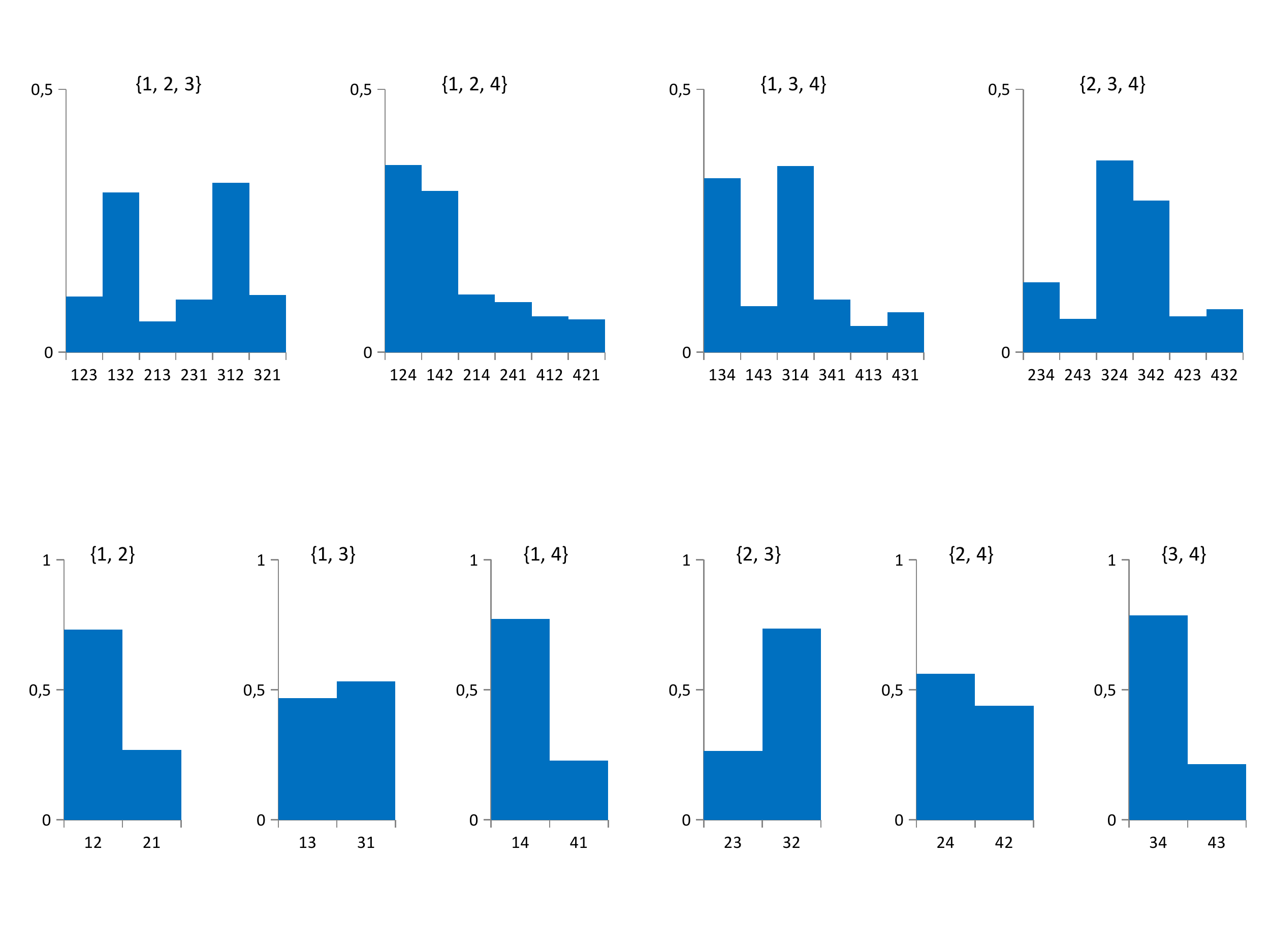}
\caption{Relative marginals of $p$ on subsets $A\subset\llbracket 4\rrbracket$ with $\vert A\vert = 2$ or $3$}
\label{fig:German-relative-marginals}
\end{figure}

\end{example}

It is obvious that each of the two families of marginal transformations leads to the analysis of completely different objects: full rankings or bucket orders involving absolute rank information, or incomplete rankings involving relative rank information. There is a way to handle incomplete rankings with $\Sn$-based harmonic analysis, as it is done in \cite{Kondor2010}, but it is not really adapted and it does not provide a powerful general framework. This can be achieved only by fully exploiting the structure of incomplete rankings and considering the right marginal transformations.

In this case, the marginal transformations suited for the analysis of incomplete rankings map a permutation $\sigma$ to the ranking it induces on a subset of items $A = \{a_{1}, \dots, a_{k}\}\in\mathcal{P}(\n)$, through the order of the values $\sigma(a_{1}), \dots, \sigma(a_{k})$. This definition is however not easy to use and thus not  convenient to characterize the structure of incomplete rankings. It happens that they can be defined from another point of view that fits with the mathematical structure of incomplete rankings. It comes from the observation that the ranking induced by a full ranking on a subset of items $A$ is obtained by keeping only the items of $A$, in the same order. More specifically, if $\sigma$ corresponds to the full ranking $a_{1} \prec \dots \prec a_{n}$ on $\n$, the ranking it induces on $A$ is given by $a_{i_{1}} \prec \dots \prec a_{i_{\vert A\vert}}$ where $i_{1} < \dots < i_{\vert A\vert}$ and $A = \{a_{i_{1}}, \dots, a_{i_{\vert A\vert}}\}$. This perspective is best expressed in the language of injective words.

\subsection{Analysis of incomplete rankings through injective words}
\label{subsec:injective-words}

An injective word over $\n$ is an expression $\omega = \omega_{1}\dots\omega_{k}$ where $1\leq k\leq n$ and $\omega_{1}, \dots, \omega_{k}$ are distinct elements of $\n$. The content of the word $\omega = \omega_{1}\dots\omega_{k}$ is $c(\omega) = \lbrace \omega_{1}, \dots, \omega_{k}\rbrace$, and its size is $\vert\omega\vert := \vert c(\omega)\vert$. The empty word $\overline{0}$ is by convention the unique word of size $0$ and content $\emptyset$. A subword of a word $\omega = \omega_{1}\dots\omega_{k}\in\Gamma_{n}$ is an expression $\omega_{i_{1}}\dots\omega_{i_{r}}$ with $1 \leq i_{1} < \dots < i_{r} \leq k$. We denote by $\Gamma_{n}$ the set of injective words over $\n$ and for $A\subset\n$ and $k\in\{0, \dots, n\}$, we set $\Gamma(A) = \{\omega\in\Gamma_{n} \;\vert\; c(\omega) = A\}$ and $\Gamma^{k} = \{ \omega\in\Gamma_{n} \;\vert\; \vert\omega\vert = k \}$. We thus have
\begin{equation}
\label{decomposition Gamma_n}
\Gamma_{n} = \bigsqcup_{k=0}^{n}\Gamma^{k} = \bigsqcup_{k=0}^{n}\bigsqcup_{\vert A\vert = k}\Gamma(A).
\end{equation}

To each incomplete ranking $\pi = a_{1} \prec\dots\prec a_{k}$, we associate the corresponding injective word $a_{1}\dots a_{k}$, and we still denote it by $\pi$. The sets $\Rank{A}$ and $\Gamma(A)$ are thus identified for $A\in\mathcal{P}(\n)$, in particular $\Sn$ is identified to $\Gamma(\n)$, and both interpretations will be used indifferently in the sequel.

The language of injective words has two major advantages for the analysis of incomplete rankings. The first is that it is well suited to express the marginal transformations that we want to consider and their properties. Let $(A,B)\in\mathcal{P}(\n)^{2}$ with $A\subset B$ and $\sigma = a_{1}\dots a_{\vert B\vert}\in\Rank{B}$ representing the ranking $a_{1} \prec\dots\prec a_{\vert B\vert}$. Then the ranking induced by $\sigma$ on $A$ is represented by the unique subword of $\sigma$ with content equal to $A$. The latter is obtained by deleting from $a_{1}\dots a_{\vert B\vert}$ all the $a_{i}$'s that do not belong to $A$. We denote by $\sigma_{\vert A}$ the ranking induced by $\sigma$ on $A$ as well as the injective word representing it. The marginal transformations of interest in the analysis of incomplete rankings are thus defined by
\begin{align*}
\Pi_{A} : \Sn &\rightarrow \Rank{A}\\
	   	\sigma &\mapsto \sigma_{\vert A},
\end{align*}
for $A\in\mathcal{P}(\n)$. We denote respectively by $f_{A}$ and $M_{A}$ the marginal of a function $f\in (\Sn)$ and the marginal operator associated to $\Pi_{A}$ (these notations are the same as in the introduction). Recall that for $\pi\in\Rank{A}$ viewed as a mapping $A\rightarrow\{1,\dots,\vert A\vert\}$, the set $\Sn(\pi)$ is defined as $\Sn(\pi) = \{\sigma\in\Sn \;\vert\; \text{for all }(a,b)\in A^2 \text{ such that }\pi(a) < \pi(b),\ \sigma(a) < \sigma(b)\}$. Viewing now $\pi\in\Rank{A}$ as an injective word, it is clear that $\Sn(\pi) = \{\sigma\in\Sn \;\vert\; \sigma_{\vert A} = \pi\} = \Pi^{-1}(\{\pi\})$. More generally we define, for $(A,B)\in\mathcal{P}(\n)^{2}$ with $A\subset B$ and $\pi\in\Rank{A}$, $\Rank{B}(\pi) = \{\sigma\in\Rank{B} \;\vert\; \sigma_{\vert A} = \pi\}$, with $\Sn(\pi) := \Rank{\n}(\pi)$. The fact that $\Pi_{A}$ is a marginal transformation is thus a direct consequence of the following lemma (for $B = \n$), its technical proof is postponed to the Appendix section.

\begin{lemma}
\label{combinatorics}
Let $(A,B)\in\mathcal{P}(\n)^{2}$ with $A\subset B$. Then $\{\Rank{B}(\pi)\}_{\pi\in\Rank{A}}$ is a partition of $\Rank{B}$ and for all $\pi\in\Rank{A}$, $\vert \Rank{B}(\pi)\vert = \vert B\vert !/\vert A\vert !$.
\end{lemma}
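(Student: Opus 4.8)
The plan is to view $\Rank{B}(\pi)$ as the fiber over $\pi$ of the restriction map $r : \Rank{B} \to \Rank{A}$, $\sigma \mapsto \sigma_{\vert A}$, and to settle both assertions by directly enumerating these fibers. First I would verify that $r$ is genuinely well defined: for $\sigma = \sigma_{1}\dots\sigma_{\vert B\vert}\in\Rank{B}$, deleting every letter $\sigma_{i}\notin A$ produces an injective word whose content is exactly $A$ (since $A\subset B = c(\sigma)$ forces each element of $A$ to occur exactly once in $\sigma$), hence a bona fide element of $\Rank{A}$. Once $r$ is known to be an honest map, the sets $\Rank{B}(\pi) = r^{-1}(\{\pi\})$ are automatically pairwise disjoint and cover $\Rank{B}$, so the only point left for the partition claim is that each fiber is nonempty. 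This will drop out of the cardinality computation below (or, if one prefers, from the explicit extension obtained by appending the letters of $B\setminus A$ in any fixed order after $\pi$).

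For the count, set $a = \vert A\vert$ and $b = \vert B\vert$ and fix $\pi = \pi_{1}\dots\pi_{a}\in\Rank{A}$. The key observation is that an extension $\sigma\in\Rank{B}(\pi)$ is determined by two independent pieces of data: the set $P\subset\{1,\dots,b\}$ of the $a$ positions of $\sigma$ occupied by letters of $A$, and an arbitrary arrangement of the $b-a$ letters of $B\setminus A$ in the remaining positions. Indeed, once $P$ is chosen the constraint $\sigma_{\vert A} = \pi$ forces the letters of $A$ into the positions of $P$ in the prescribed order (the $j$-th smallest element of $P$ must receive $\pi_{j}$), leaving no freedom; conversely every such choice yields a word of content $B$ restricting to $\pi$ on $A$. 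This sets up a bijection between $\Rank{B}(\pi)$ and the pairs $(P,g)$ with $P\in\binom{\{1,\dots,b\}}{a}$ and $g$ a bijection from the $b-a$ complementary positions onto $B\setminus A$, whence $\vert\Rank{B}(\pi)\vert = \binom{b}{a}(b-a)! = b!/a!$, as claimed.

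I expect the main obstacle to be bookkeeping rather than anything conceptual: making the ``delete the letters outside $A$, then refill the gaps'' correspondence into a rigorously checked bijection requires carefully distinguishing positions from letters in the injective-word formalism and confirming that $\sigma_{\vert A}$ equals the fixed $\pi$ precisely when the letters of $A$ sit in the positions of $P$ in $\pi$-order. A clean alternative that avoids the position bookkeeping is induction on $\vert B\setminus A\vert$, using the transitivity relation $(\sigma_{\vert B\setminus\{c\}})_{\vert A} = \sigma_{\vert A}$ valid for $A\subset B\setminus\{c\}\subset B$. The base case $A = B$ gives $r = \mathrm{id}$ and fibers of size $1 = b!/a!$; in the inductive step one removes a single letter $c\in B\setminus A$, notes that each extension of $\tau\in\Rank{B\setminus\{c\}}(\pi)$ to $B$ is obtained by inserting $c$ into one of the $b$ available slots, and concludes $\vert\Rank{B}(\pi)\vert = b\cdot\vert\Rank{B\setminus\{c\}}(\pi)\vert = b\cdot (b-1)!/a! = b!/a!$, the disjointness being inherited through the same transitivity relation. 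Either route establishes both claims simultaneously.
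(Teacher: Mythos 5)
Your proof is correct, but it takes a genuinely different route from the paper's. You enumerate each fiber of the restriction map $r:\Rank{B}\rightarrow\Rank{A}$ explicitly: choosing the set $P$ of positions occupied by the letters of $A$ (whose internal order is then forced by $\sigma_{\vert A}=\pi$) and an arbitrary arrangement of $B\setminus A$ in the remaining slots gives the bijection yielding $\binom{\vert B\vert}{\vert A\vert}(\vert B\vert-\vert A\vert)! = \vert B\vert!/\vert A\vert!$; your inductive variant, inserting one letter of $B\setminus A$ at a time into $\vert B\vert$ slots, is likewise sound and in fact mirrors the mechanism of the paper's Lemma \ref{ranking-extension} and the proof of Proposition \ref{refinement}. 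The paper instead argues by symmetry: the group $\Sym{A}$ acts on both $\Rank{A}$ and $\Rank{B}$, the restriction map is equivariant, and transitivity of the action on $\Rank{A}$ gives at once surjectivity (hence nonempty fibers) and equality of all fiber cardinalities, so the count follows by dividing $\vert\Rank{B}\vert = \vert B\vert!$ by $\vert\Rank{A}\vert = \vert A\vert!$ with no fiber ever being enumerated. The trade-off is clear: the paper's equivariance argument is shorter, avoids the position-versus-letter bookkeeping you flag as the main hazard, and is in keeping with the group-action machinery recalled in the appendix; your argument is more elementary and self-contained, and it buys an explicit, constructive parametrization of $\Rank{B}(\pi)$ (positions plus filling), which is more informative than a pure counting identity and is the kind of description one would want when designing algorithms over these fibers.
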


The refinement relations inside the family of marginal transformations $(\Pi_{A})_{A\in\mathcal{P}(\n)}$ rely on the structure of injective words. For $\pi\in\Gamma_{n}$ with $\vert\pi\vert < n$, $b\in\n\setminus c(\pi)$ and $i\in\{ 1, \dots, \vert\pi\vert + 1\}$, we denote by $\pi\triangleleft_{i} b$ the word obtained by inserting $b$ in $i^{\text{th}}$ position in $\pi$. The following lemma, the proof of which is straightforward, is the base of the refinement relations.

\begin{lemma}
\label{ranking-extension}
Let $A\subsetneq B\subset\n$ and $\pi\in\Rank{A}$. For all $b\in B\setminus A$, $\Rank{B}(\pi) = \bigsqcup_{i = 1}^{\vert A\vert +1}\Rank{B}(\pi\triangleleft_{i}b)$. In particular, $\Rank{A\cup\{b\}}(\pi) = \{\pi\triangleleft_{1}b, \dots, \pi\triangleleft_{\vert A\vert + 1}b \}$.
\end{lemma}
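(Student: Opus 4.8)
The plan is to prove the two assertions of Lemma~\ref{ranking-extension} directly from the definitions of the insertion operation $\triangleleft_i$ and the induced-ranking restriction $\sigma_{\vert A}$, reducing the general statement (for arbitrary $B$) to the elementary single-item case $A\cup\{b\}$ and then building up to $B$ by iterating. The crucial observation is that a word $\sigma\in\Rank{B}$ restricts to $\pi$ on $A$ precisely when, after deleting every letter of $c(\sigma)\setminus A$, the surviving subword equals $\pi$. I would organize the argument around the single new letter $b$ first, since that is where the combinatorial content lives.

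First I would establish the ``in particular'' clause, namely that $\Rank{A\cup\{b\}}(\pi) = \{\pi\triangleleft_1 b,\dots,\pi\triangleleft_{\vert A\vert+1}b\}$. A word $\tau\in\Rank{A\cup\{b\}}$ satisfies $\tau_{\vert A}=\pi$ if and only if deleting the single letter $b$ from $\tau$ yields $\pi$. Since $\tau$ has exactly $\vert A\vert+1$ letters, the letter $b$ occupies one of the $\vert A\vert+1$ positions, and removing it from the word with $b$ in position $i$ recovers $\pi$ exactly when the remaining letters read off $\pi$ in order; this forces $\tau=\pi\triangleleft_i b$. Conversely each $\pi\triangleleft_i b$ manifestly restricts to $\pi$ on $A$ by construction. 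This is a clean bijection between valid one-step extensions and insertion positions, and the $\vert A\vert+1$ listed words are pairwise distinct because $b$ sits in a different position in each.

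Next I would prove the disjoint-union identity $\Rank{B}(\pi) = \bigsqcup_{i=1}^{\vert A\vert+1}\Rank{B}(\pi\triangleleft_i b)$ for general $B$ with $b\in B\setminus A$. The containment $\supseteq$ is immediate: if $\sigma_{\vert A\cup\{b\}}=\pi\triangleleft_i b$ then restricting further to $A\subset A\cup\{b\}$ gives $\sigma_{\vert A}=(\pi\triangleleft_i b)_{\vert A}=\pi$, using transitivity of restriction (deleting letters outside $A\cup\{b\}$ and then the letter $b$ is the same as deleting all letters outside $A$). For $\subseteq$, take $\sigma\in\Rank{B}(\pi)$; then $\sigma_{\vert A\cup\{b\}}$ is a word in $\Rank{A\cup\{b\}}$ whose restriction to $A$ is $\pi$, so by the already-proved case it equals $\pi\triangleleft_i b$ for a unique $i$, placing $\sigma$ in exactly one block. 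Disjointness follows from this uniqueness: a given $\sigma$ determines a single value of $i$ via the position of $b$ in $\sigma_{\vert A\cup\{b\}}$, so the blocks $\Rank{B}(\pi\triangleleft_i b)$ cannot overlap.

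The main obstacle, such as it is, is purely bookkeeping: one must verify carefully that the restriction operation is \emph{transitive} in the sense that $(\sigma_{\vert B'})_{\vert A}=\sigma_{\vert A}$ whenever $A\subset B'$, since every step above silently uses this. This holds because $\sigma_{\vert A}$ is characterized as the unique subword of $\sigma$ with content $A$, and a subword of a subword is again a subword of the original with the smaller content; deleting letters in two stages (first those outside $B'$, then those in $B'\setminus A$) deletes exactly the letters outside $A$, preserving the relative order throughout. Once this transitivity is in hand the lemma follows formally, which is why the authors note the proof is straightforward.
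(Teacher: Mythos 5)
Your proof is correct, and it supplies exactly the ``straightforward'' argument the paper itself omits (the authors state the lemma without proof): the one-step case follows from the bijection between insertion positions of $b$ and words in $\Rank{A\cup\{b\}}$ restricting to $\pi$, and the general case follows by applying this to $\sigma_{\vert A\cup\{b\}}$ together with transitivity of restriction, which you rightly identify and verify as the one point needing care. Nothing is missing.
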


\begin{proposition}
\label{refinement}
For $A, B\in\mathcal{P}(\n)$ with $A\subset B$, $\Pi_{B}$ is a refinement of $\Pi_{A}$, \textit{i.e.}
\[
\ker M_{B} \subset \ker M_{A}.
\]
\end{proposition}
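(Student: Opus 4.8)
The plan is to show that restriction to $A$ factors through restriction to $B$, so that the $A$-marginal of any function is a further marginalization of its $B$-marginal; the kernel inclusion then drops out.

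First I would record the transitivity of restriction. Viewing $\sigma\in\Sn$ as an injective word, $\sigma_{\vert A}$ is obtained by deleting every letter outside $A$. Since $A\subset B$, deleting the letters outside $B$ and then those outside $A$ yields the same word as deleting the letters outside $A$ in a single step, so $(\sigma_{\vert B})_{\vert A} = \sigma_{\vert A}$. Introducing the restriction map $\Pi_{A}^{B} : \Rank{B}\rightarrow\Rank{A}$, $\tau\mapsto\tau_{\vert A}$, this reads $\Pi_{A} = \Pi_{A}^{B}\circ\Pi_{B}$ as maps $\Sn\rightarrow\Rank{A}$.

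Next I would check that $\Pi_{A}^{B}$ is itself a marginal transformation. Its fiber over $\pi\in\Rank{A}$ is exactly $\Rank{B}(\pi)=\{\tau\in\Rank{B}\;\vert\;\tau_{\vert A}=\pi\}$, and Lemma \ref{combinatorics} tells us that $\{\Rank{B}(\pi)\}_{\pi\in\Rank{A}}$ partitions $\Rank{B}$ into nonempty blocks of constant cardinality $\vert B\vert!/\vert A\vert!$; hence $\Pi_{A}^{B}$ is surjective with equicardinal fibers. The key step is then the compatibility of marginals with composition: decomposing the fiber as $\Pi_{A}^{-1}(\{\pi\}) = \bigsqcup_{\tau\in\Rank{B}(\pi)}\Pi_{B}^{-1}(\{\tau\})$ and summing $f$ over it in two stages yields, for every $f\in L(\Sn)$ and every $\pi\in\Rank{A}$, the identity $f_{A}(\pi)=\sum_{\tau\in\Rank{B}(\pi)}f_{B}(\tau)$.

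The conclusion is then immediate: if $f\in\ker M_{B}$, so that $f_{B}\equiv 0$, this identity forces $f_{A}(\pi)=0$ for all $\pi\in\Rank{A}$, i.e. $f\in\ker M_{A}$. Thus $\ker M_{B}\subset\ker M_{A}$, which is precisely the refinement relation. I expect no genuine obstacle here: the only point requiring care is the transitivity and resulting factorization of the restriction maps, after which everything is a bookkeeping consequence of Lemma \ref{combinatorics}.
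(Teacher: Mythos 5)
Your proof is correct, but it takes a genuinely different route from the paper's. The paper proves the inclusion one item at a time: for $b\in\n\setminus A$ it applies Lemma \ref{ranking-extension} to split $\Sn(\pi)$ as $\bigsqcup_{i=1}^{\vert A\vert+1}\Sn(\pi\triangleleft_{i}b)$, deduces $M_{A}f(\pi) = \sum_{i=1}^{\vert A\vert+1}M_{A\cup\{b\}}f(\pi\triangleleft_{i}b)$, hence $\ker M_{A\cup\{b\}}\subset\ker M_{A}$, and then concludes by induction on the number of items added to pass from $A$ to $B$. You instead factor the restriction map in one shot, $\Pi_{A} = \Pi_{A}^{B}\circ\Pi_{B}$, via transitivity of letter deletion, decompose the fiber $\Pi_{A}^{-1}(\{\pi\})$ into the blocks $\Pi_{B}^{-1}(\{\tau\})$ for $\tau\in\Rank{B}(\pi)$, and sum in two stages to obtain $f_{A}(\pi)=\sum_{\tau\in\Rank{B}(\pi)}f_{B}(\tau)$, from which the kernel inclusion is immediate, with no induction. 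What your route buys is the general marginalization identity for arbitrary $A\subset B$ in a single step; in the paper this identity (in the form $\varrho_{B\setminus A}f_{B}=f_{A}$) only appears later, as a consequence of the projective-system structure of the deletion operators. What the paper's route buys is brevity given the lemmas it already has in place, and its one-insertion-at-a-time computation mirrors the projectivity condition \eqref{rationality} term by term. One cosmetic remark: the surjectivity and equicardinality of the fibers of $\Pi_{A}^{B}$, which you import from Lemma \ref{combinatorics}, are not actually needed for the kernel inclusion; the two-stage summation only requires the set-theoretic fiber decomposition, which follows from the factorization $\Pi_{A} = \Pi_{A}^{B}\circ\Pi_{B}$ alone.
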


\begin{proof}
Let $A\in\mathcal{P}(\n)$ and $b\in\n\setminus A$. For $f\in L(\Sn)$, lemma \ref{ranking-extension} gives, for all $\pi\in\Rank{A}$,
\[
M_{A}f(\pi) = \sum_{\sigma\in\Sn(\pi)}f(\sigma) = \sum_{i=1}^{\vert A\vert + 1}\sum_{\sigma\in\Sn(\pi\triangleleft_{i}b)}f(\sigma) = \sum_{i=1}^{\vert A\vert + 1}M_{A\cup\{b\}}f(\pi\triangleleft_{i}b).
\]
This implies that $\ker M_{A\cup\{b\}} \subset \ker M_{A}$ and the proof is concluded by induction.
\end{proof}

The second major advantage of the language of injective words is that it allows to define a global framework for all incomplete rankings. To this purpose, we see the elements of $L(\Gamma_{n})$ as free linear combinations of injective words, also called chains, \textit{i.e.} expressions of the form $x = \sum_{\omega\in\Gamma_{n}}x(\omega)\omega$, where $\omega$ refers at the same time to a word in $\Gamma_{n}$ and to the Dirac function of this word in $L(\Gamma_{n})$. Notice then that $\overline{0}$ denotes the Dirac function in the empty word, whereas $0$ denotes the function equal to $0$ for all $\omega\in\Gamma_{n}$, and that the indicator function of a set $S\subset\Gamma_{n}$ is equal to the sum of the Dirac functions in its elements
\[
\1{S} = \sum_{\sigma\in S}\sigma.
\]
By definition, the marginal operator $M_{A}$ applied to the Dirac function of $\sigma\in\Sn$ in $L(\Sn)$ is equal to the Dirac function of $\sigma_{\vert A}$ in $L(\Rank{A})$. Using the chain notation, this gives:
\begin{equation}
\label{marginal-injective-word}
M_{A}\sigma = \sigma_{\vert A}.
\end{equation}
A function in $L(\Gamma(A))$ for $A\subset\n$ is thus directly seen as a chain in $L(\Gamma_{n})$, and by equation \eqref{decomposition Gamma_n}, we have $L(\Gamma_{n}) = \bigoplus_{k=0}^{n}\bigoplus_{\vert A\vert = k}L(\Gamma(A))$. This decomposition allows to embed $L(\Sn)$, all the spaces of marginals $L(\Rank{A})$ for $A\in\mathcal{P}(\n)$ and the spaces $L(\Gamma(A))$ for $\vert A\vert \leq 1$ into one general space, that is $L(\Gamma_{n})$. For $n = 4$, $L(\Gamma_{4})$ decomposes as follows.
\begin{center}
$L\left(\Sym{4}\right)$\\
\vspace{0.25cm}
$L\left(\Rank{\{1,2,3\}}\right) \ \oplus\ L\left(\Rank{\{1,2,4\}}\right) \ \oplus\ L\left(\Rank{\{1,3,4\}}\right) \ \oplus\ L\left(\Rank{\{2,3,4\}}\right)$\\
\vspace{0.25cm}
$L\left(\Rank{\{1,2\}}\right) \ \oplus\ L\left(\Rank{\{1,3\}}\right) \ \oplus\ L\left(\Rank{\{1,4\}}\right) \ \oplus\ L\left(\Rank{\{2,3\}}\right) \ \oplus\ L\left(\Rank{\{2,4\}}\right) \ \oplus\ L\left(\Rank{\{3,4\}}\right)$\\
\vspace{0.25cm}
$L(\Gamma(\{1\})) \ \oplus\ L(\Gamma(\{2\})) \ \oplus\ L(\Gamma(\{3\})) \ \oplus\ L(\Gamma(\{4\}))$\\
\vspace{0.25cm}
$L(\Gamma(\overline{0}))$
\end{center}
This embedding allows to model all possible observations of incomplete rankings. Indeed, let $\mathcal{A}\subset\mathcal{P}(\n)$ be an observation design. Then for each $A\in\mathcal{A}$, the variability of the observed rankings on $A$ is represented by a probability distribution $P_{A}\in L(\Rank{A})$. The total variability of the observed rankings is thus represented by the collection $(P_{A})_{A\in\mathcal{A}}\in\bigoplus_{A\in\mathcal{A}}L(\Rank{A})\subset L(\Sn)$.

\begin{example}
Let us assume that we observe incomplete rankings on $\llbracket 4\rrbracket$ through the observation design $\mathcal{A} = \{ \{1,3\}, \{2,4\}, \{3,4\}, \{1,2,3\}, \{1,3,4\} \}$. Then the collection of probability distributions is an element of the sum of the spaces in bold, in the following representation.
\begin{center}
$L\left(\Sym{4}\right)$\\
\vspace{0.25cm}
$\mathbf{L\left(\Rank{\{1,2,3\}}\right)} \ \oplus\ L\left(\Rank{\{1,2,4\}}\right) \ \oplus\ \mathbf{L\left(\Rank{\{1,3,4\}}\right)} \ \oplus\ L\left(\Rank{\{2,3,4\}}\right)$\\
\vspace{0.25cm}
$L\left(\Rank{\{1,2\}}\right) \ \oplus\ \mathbf{L\left(\Rank{\{1,3\}}\right)} \ \oplus\ L\left(\Rank{\{1,4\}}\right) \ \oplus\ L\left(\Rank{\{2,3\}}\right) \ \oplus\ \mathbf{L\left(\Rank{\{2,4\}}\right)} \ \oplus\ \mathbf{L\left(\Rank{\{3,4\}}\right)}$\\
\vspace{0.25cm}
$L(\Gamma(\{1\})) \ \oplus\ L(\Gamma(\{2\})) \ \oplus\ L(\Gamma(\{3\})) \ \oplus\ L(\Gamma(\{4\}))$\\
\vspace{0.25cm}
$L(\Gamma(\overline{0}))$
\end{center}
\end{example}

Notice however that we are not interested in performing data analysis in the space $\bigoplus_{A\in\mathcal{A}}L(\Rank{A})$ but in its subspace $\mathbb{M}_{\mathcal{A}}$ of the collections $(f_{A})_{A\in\mathcal{A}}\in \bigoplus_{A\in\mathcal{A}}L(\Rank{A})$ that satisfy condition \eqref{rationality}, $\mathbb{M}_{\mathcal{A}} = M_{\mathcal{A}}(L(\Sn))$. This embedding remains nonetheless very convenient to define global operators that exploit the structure of injective words.

\begin{definition}[Deletion operator]
\label{def-deletion}
Let $a\in\n$. For $\pi\in\Gamma_{n}$ such that $a\in c(\pi)$, we denote by $\pi\setminus\{a\}$ the word obtained by deleting the letter $a$ in the word $\pi$. We extend this operation into the operator $\varrho_{a} : L(\Gamma_{n}) \rightarrow L(\Gamma_{n})$, defined on a Dirac function $\pi$ by
\begin{equation*}
\varrho_{a}\pi = \left\{
\begin{aligned}
\pi\setminus\{a\} \qquad &\text{if } a\in c(\pi)\\
\pi \qquad &\text{otherwise.}
\end{aligned}\right.
\end{equation*}
For $a_{1}, a_{2}\in\n$, it is obvious that $\varrho_{a_{1}}\varrho_{a_{2}} = \varrho_{a_{2}}\varrho_{a_{1}}$. This allows to define, for $A = \{a_{1}, \dots, a_{k}\}\subset\n$, $\varrho_{A} = \varrho_{a_{1}}\dots\varrho_{a_{k}}$. We set by convention $\varrho_{\emptyset}x = x$ for all $x\in L(\Gamma_{n})$.
\end{definition}

\begin{remark}
Notice that for any $\pi\in\Gamma_{n}$, $\varrho_{c(\pi)}\pi = \overline{0}$. This implies that for $A\subset\n$ and $x\in L(\Gamma(A))$, $\varrho_{A}x = \left[\sum_{\pi\in\Gamma(A)}x(\pi)\right]\overline{0}$.
\end{remark}

The family of spaces $(L(\Gamma(A)))_{A\subset\n}$ equipped with the family of operators $(\varrho_{B\setminus A})_{A\subset B\subset\n}$ is a projective system, \textit{i.e.} for all $A\subset B\subset C\subset \n$,
\begin{itemize}
\item $\varrho_{B\setminus A} : L(\Gamma(B)) \rightarrow L(\Gamma(A))$,
\item $\varrho_{A\setminus A}x = x\qquad$ for all $x\in L(\Gamma(A))$,
\item $\varrho_{B\setminus A}\varrho_{C\setminus B} = \varrho_{C\setminus A}$.
\end{itemize}
It is represented for $n = 4$ in figure \ref{fig:projective-system-4}.

\begin{figure}[h!]
\centering
\includegraphics[scale=0.5]{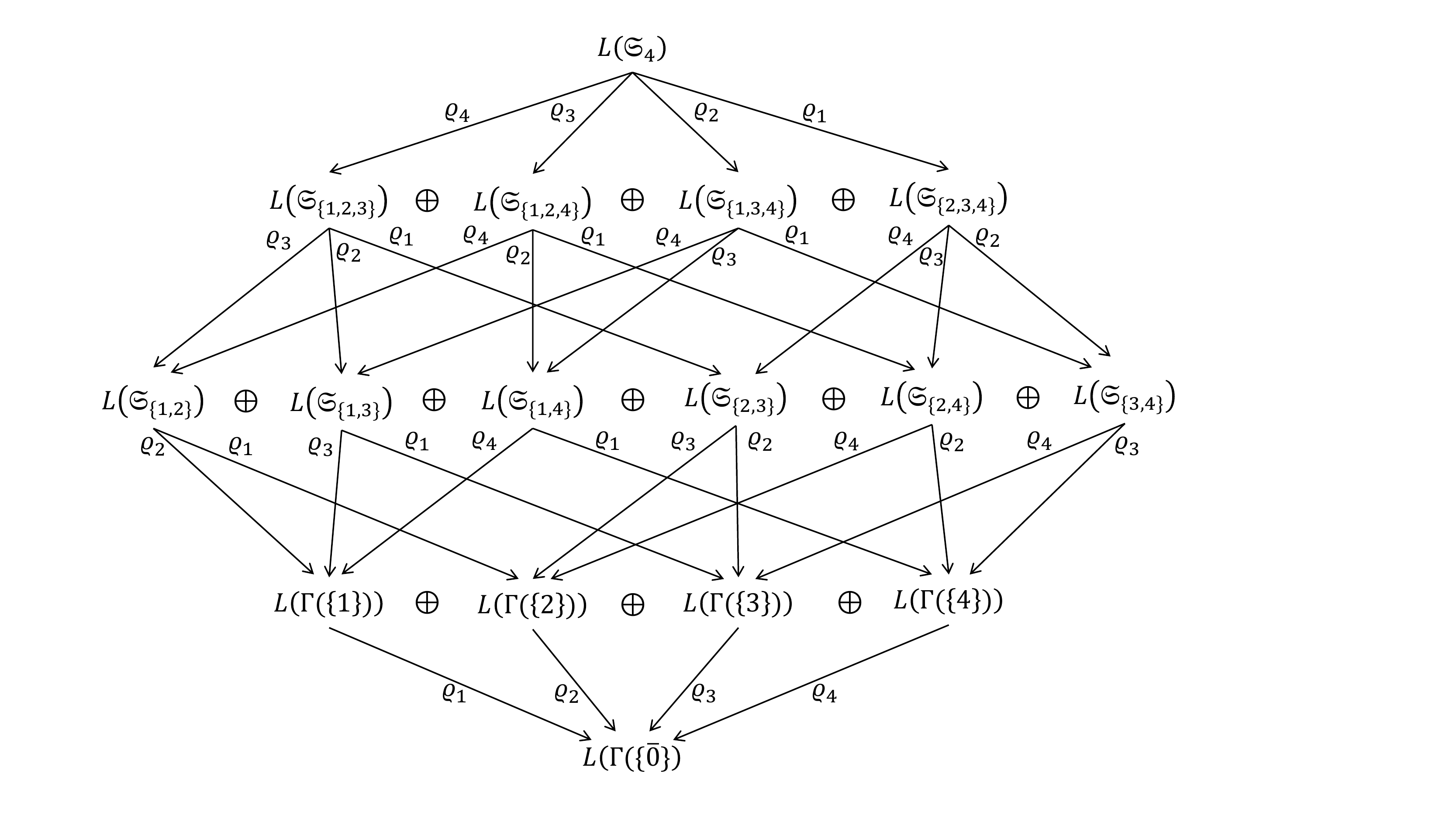}
\caption{Projective system $((L(\Gamma(A)))_{A\subset\n}, (\varrho_{B\setminus A})_{A\subset B\subset\n})$ for $n = 4$}
\label{fig:projective-system-4}
\end{figure}

With these notations, assumption \eqref{rationality} for a family $(f_{A})_{A\in\mathcal{P}(\n)}$ becomes: for $A\in\mathcal{P}(\n)$ with $\vert A\vert < n$ and $b\in\n\setminus A$,
\[
f_{A}(\pi) = \sum_{i=1}^{\vert A\vert + 1}f_{A\cup\{b\}}(\pi\triangleleft_{i}b) = \varrho_{b}f_{A\cup\{b\}}(\pi),
\]
for all $\pi\in\Rank{A}$. The projective system properties then imply more generally that for any $(A,B)\in\mathcal{P}(\n)$ with $A\subset B$,
\[
\varrho_{B\setminus A}f_{B} = f_{A}.
\]

\begin{example}
We keep the same example as before: the number of items is $n=4$ and the observation design is $\mathcal{A} = \{ \{1,3\}, \{2,4\}, \{3,4\}, \{1,2,3\}, \{1,3,4\} \}$. The relations imposed on an element $(f_{A})_{A\in\mathcal{A}}\in\mathbb{M}_{\mathcal{A}}$ are represented in figure \ref{fig:space-M-projectivity}.

\begin{figure}[h!]
\centering
\includegraphics[scale=0.3]{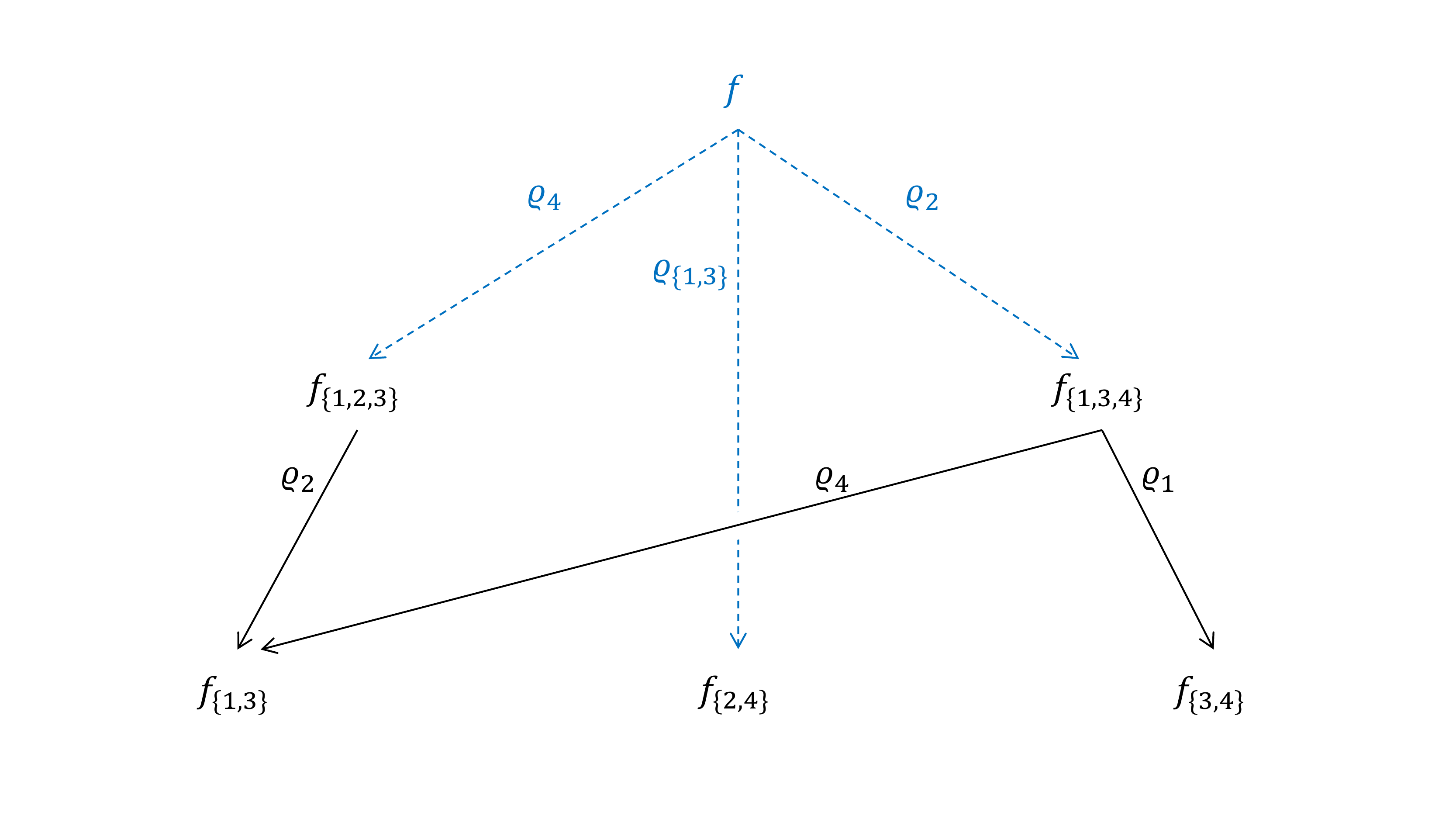}
\caption{Projectivity conditions on $\mathbb{M}_{\mathcal{A}}$ for $\mathcal{A} =\{ \{1,3\}, \{2,4\}, \{3,4\}, \{1,2,3\}, \{1,3,4\} \}$}
\label{fig:space-M-projectivity}
\end{figure}

\end{example}

Now, let $(A,B)\in\mathcal{P}(\n)^{2}$ with $A\subset B$ and $\sigma\in\Rank{B}$. By definition, $\sigma_{\vert A}$ is the word obtained by deleting in $\sigma$ all the elements that are not in $A$, so $\sigma_{\vert A} = \sigma\setminus (B\setminus A) = \varrho_{B\setminus A}\sigma$. In particular for $B = \n$, we have from equation \eqref{marginal-injective-word}
\begin{equation}
\label{characterization marginal}
M_{A} = \varrho_{\n\setminus A}.
\end{equation}
All the marginals operators can thus be expressed in terms of deletion operators. For an element $(f_{A})_{A\in\mathcal{A}}$ of a observation design $\mathcal{A}\subset\mathcal{P}(\n)$, we express each $f_{A}$ as the marginal on $A$ of a function $f$. Our goal is to obtain a decomposition of $f$ into components that have a localized effect on the $f_{A}$'s. More precisely, we want a decomposition of $f$ of the form
\[
f = \tilde{f}_{0} + \sum_{B\in\bigcup_{A\in\mathcal{A}}\mathcal{P}(A)}\tilde{f}_{B}
\]
such that for any $A\in\mathcal{A}$,
\begin{equation}
\label{objective}
f_{A} = M_{A}\left[\tilde{f}_{0} + \sum_{B\in\mathcal{P}(A)}\tilde{f}_{B}\right].
\end{equation}

\begin{example}
Using the same example as before, we represent the principle of the decomposition in figure \ref{fig:decomposition-space-M}.

\begin{figure}[h!]
\centering
\includegraphics[scale=0.35]{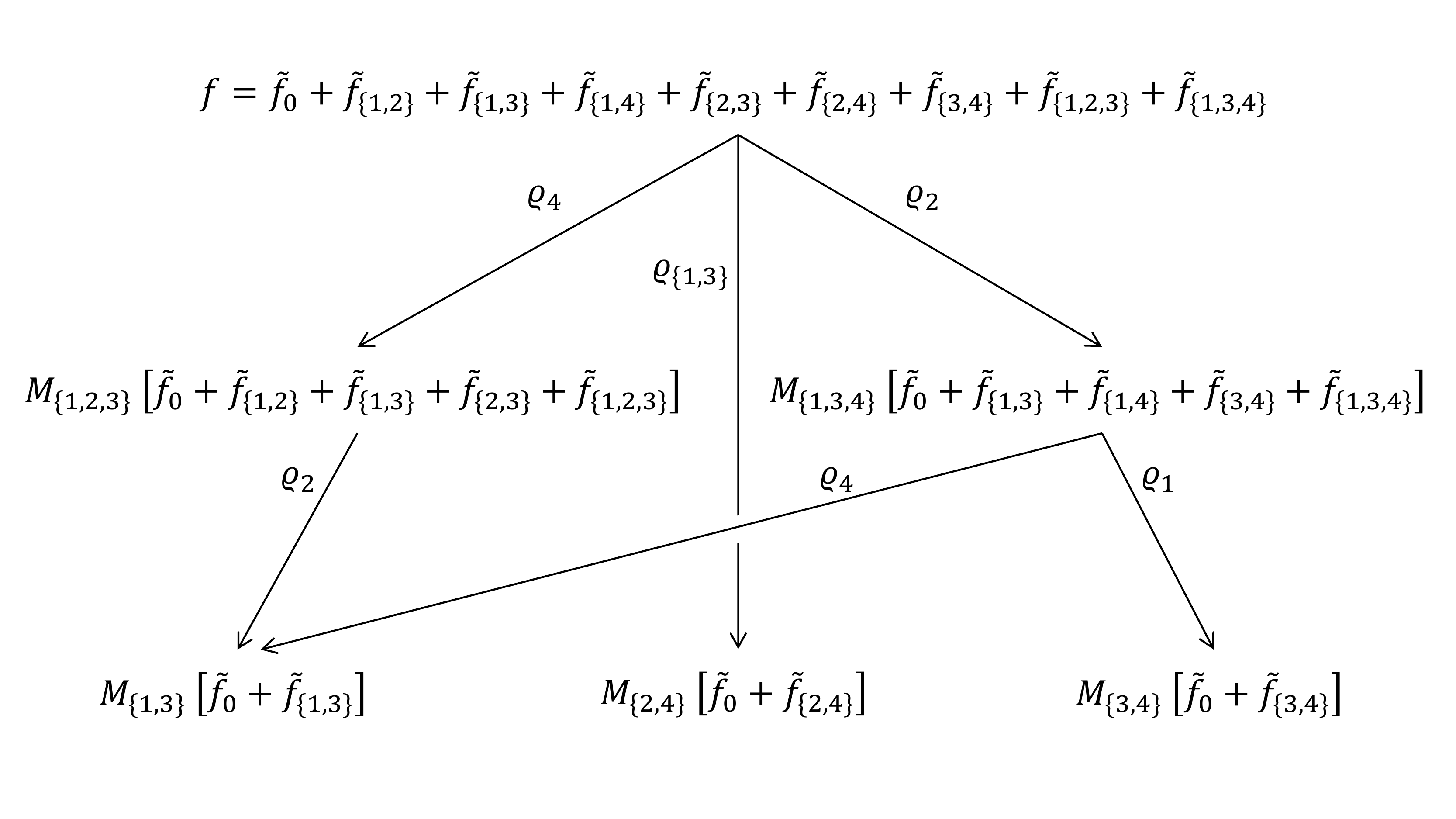}
\caption{Decomposition of a function $f\in L(\Sn)$ adapted to the observation design $\mathcal{A} = \{ \{1,3\}, \{2,4\}, \{3,4\}, \{1,2,3\}, \{1,3,4\} \}$}
\label{fig:decomposition-space-M}
\end{figure}

\end{example}

\section{The multiresolution decomposition}
\label{sec:multiresolution-decomposition}

We now enter the details of the construction of our multiresolution decomposition of $L(\Sn)$ that provides a multiresolution decomposition of $\mathbb{M}_{\mathcal{A}}$ for any observation design $\mathcal{A}\subset\mathcal{P}(\n)$.

\subsection{Requirements for $W_{A}$}
\label{subsec:requirements-space-W}

For $A\in\mathcal{P}(\n)$, we want to construct a subspace $W_{A}$ of $L(\Sn)$ that ``localizes the information that is specific to the marginal on $A$ and not to the others''. The precise definition of this statement relies on the refinement relation on the $\Pi_{B}$'s shown in proposition \ref{refinement}, namely for $B\subset B^{\prime}$, $\Pi_{B^{\prime}}$ is a refinement of $\Pi_{B}$. This implies first that $W_{A}$ cannot contain the entire information related to the marginal on $A$, otherwise it would contain also the entire information related to the marginal on $B$ for all $B\subset A$, which is not specific to $A$. So $W_{A}$ cannot be a supplementary space of $\ker M_{A}$ but we require that all the information it localizes be contained in the marginal on $A$, \textit{i.e.} that $W_{A}\cap\ker M_{A} = \{0\}$. Second, for $B\supset A$, the marginal on $B$ contains all the information related to the marginal on $A$ and \textit{a fortiori} the information localized by $W_{A}$, so we have necessarily $W_{A}\cap\ker M_{B} = \{0\}$. We can require however that $W_{A}\subset\ker M_{B}$ for all $B\in\mathcal{P}(\n)$ such that $B\not\supset A$. We thus want $W_{A}$ to satisfy two conditions:
\begin{enumerate}
	\item it localizes information related to the marginal on $A$, \textit{i.e.}
	\begin{equation}
	\label{condition 1}
	W_{A}\cap\ker M_{A} = \{0\}
	\end{equation}
	\item it localizes information that is not contained in the marginals on $B\in\mathcal{P}(\n)$ for $B\not\supset A$, \textit{i.e.}
	\begin{equation}
	\label{condition 2}
	W_{A}\subset\bigcap_{\substack{B\in\mathcal{P}(\n) \\ B\not\supset A}}\ker M_{B}.
	\end{equation}
\end{enumerate}
Let us first consider the case $A = \n$. The operator $M_{\n}$ is equal to the identity mapping on $L(\Sn)$, so $\ker M_{\n} = \{0\}$ and $W_{\n}$ only needs to satisfy condition \eqref{condition 2}. Since we want $W_{\n}$ to localize all the information that is specific to $M_{\n}$, we define
\[
W_{\n} = \bigcap_{\substack{B\in\mathcal{P}(\n) \\ B\subsetneq \n}}\ker M_{B}.
\]
Using proposition \ref{refinement}, one has $W_{\n} = \bigcap_{\vert B\vert = n-1}\ker M_{B}$. Now, if $\vert B\vert = n-1$, $\n\setminus B$ is necessarily of the form $\{a\}$ with $a\in\n$. Thus, using equation \eqref{characterization marginal}, we obtain
\[
W_{\n} = \{ x\in L(\Sn) \;\vert\; \varrho_{a}(x) = 0 \text{ for all } a\in \n\}.
\]
More generally for $A\in\mathcal{P}(\n)$, let $H_{A}$ be the space
\begin{equation}
\label{espace-H}
H_{A} = \{ x\in L(\Gamma(A)) \;\vert\; \varrho_{a}(x) = 0 \text{ for all } a\in A\}.
\end{equation}
Seeing $L(\Gamma(A))$ as the space of marginals on $A$, the space $H_{A}$ contains, among the information related to marginals on $A$, the information that is specific to $A$ and not to subsets $B\subsetneq A$. This is exactly the information that we want $W_{A}$ to localize. But the elements of $H_{A}$ are chains on words with content $A$, not $\n$, and $H_{A}$ is not a subspace of $L(\Sn)$. The space $W_{A}$ must then be constructed as an embedding of $H_{A}$ into $L(\Sn)$. The choice of the embedding can nonetheless not be arbitrary if we want $W_{A}$ to satisfy conditions \eqref{condition 1} and \eqref{condition 2}. We are looking for a linear operator $\phi_{n} : L(\Gamma_{n}) \rightarrow L(\Sn)$ such that for all $A\in\mathcal{P}(\n)$,
\[
\phi_{n}(H_{A})\cap\ker M_{A} = \{0\} \qquad\text{and}\qquad \phi_{n}(H_{A})\subset\bigcap_{\substack{B\in\mathcal{P}(\n) \\ B\not\supset A}}\ker M_{B}.
\]
The rationale behind this is that we want $\phi_{n}$ to ``pull up'' all the information contained in $H_{A}$ in $L(\Sn)$ in a way that it does not impact the marginals on the subsets $B\in\mathcal{P}(\n)$ such that $B\not\supset A$. Mapping the Dirac function of a $\pi\in\Rank{A}$ in $L(\Gamma_{n})$ to an element of $L(\Sn)$ involves necessarily the insertion of the missing items $\n\setminus A$. But this can be done in many different ways. In the case where $A = \n\setminus\{b\}$ with $b\in\n$, the insertion of $b$ in an element $\pi\in\Rank{A}$ can be done at any of the $n$ positions. More generally for $\vert A\vert = k$, the number of ways to insert the items of $\n\setminus A$ in an element $\pi\in\Rank{A}$ is equal to $n!/k!$. Perhaps the most natural embedding is to insert the items in all possible ways. The embedding operator would then be defined on the Dirac function of a word $\pi\in\Gamma^{n-1}$ with $\n\setminus c(\pi) = \{b\}$ by
\[
\phi_{n}^{\prime}\pi = \sum_{i=1}^{n}\pi\triangleleft_{i}b,
\]
and more generally on the Dirac function of any $\pi\in\Gamma_{n}$ by
\[
\phi_{n}^{\prime}\pi = \1{\Sn(\pi)} = \sum_{\sigma\in\Sn(\pi)}\sigma.
\]
For $A\in\mathcal{P}(\n)$ and $\pi\in\Rank{A}$, we have
\[
M_{A}\phi_{n}^{\prime} \pi = M_{A}\1{\Sn(\pi)} = \sum_{\sigma\in\Sn(\pi)}\sigma_{\vert A} = \frac{n!}{\vert A\vert!}\pi, \quad\text{thus}\quad \phi(H_{A})\cap\ker M_{A} = \{0\},
\]
but for $B\in\mathcal{P}(\n)$ such that $B\not\subset A$ and $x\in H_{A}\setminus\{0\}$, $M_{B}\phi_{n}^{\prime} x \neq 0$. This can be shown in the general case but it is not necessary here. We just consider a simple example to give some insights, we take $n=3$, $A = \{1,2\}$ and $B=\{1,3\}$. By definition, $H_{\{1,2\}}$ is the space of chains of the form $\alpha.12 + \beta.21$ such that $\alpha+\beta = 0$. It is thus spanned by the chain $12-21$ and we have
\begin{align*}
M_{\{1,3\}}\phi_{3}^{\prime}(12-21) 	&= \varrho_{2}[(312+132+123)-(321+231+213)] \\
															&= 31+2.13-2.31-13\\
															&=13-31 \\
															&\neq 0.
\end{align*}
This is due to the fact that when deleting $2$ in $132$ and $123$ (or in $321$ and $231$), we obtain twice the same result. More generally, it is easy to see that for  $A = \n\setminus\{a\}$ and $B = \n\setminus\{b\}$ with $(a,b)\in\n^{2}$, $a\neq b$, and $\pi\in\Rank{A}$,
\[
M_{B}\phi_{n}^{\prime} \pi  = \phi_{B}^{\prime}\varrho_{b}\pi + \pi_{b\rightarrow a},
\]
where $\phi_{B}^{\prime}$ is the linear operator $L(\Gamma_{n}) \rightarrow L(\Gamma(B))$ defined on the Dirac functions by $\pi\mapsto \1{\Rank{B}(\pi)}$ if $c(\pi)\subset B$ and $0$ otherwise, and $\pi_{b\rightarrow a}$ is the word obtained when replacing $b$ by $a$ in $\pi$. This implies that for $x\in H_{A}$,
\[
M_{B}\phi_{n}^{\prime} x = \phi_{B}^{\prime}\varrho_{b}x + \sum_{\pi\in\Rank{A}}x(\pi)\pi_{b\rightarrow a} = \sum_{\pi\in\Rank{A}}x(\pi)\pi_{b\rightarrow a},
\]
because, since $b\in A$, $\varrho_{b}x = 0$ by definition of $H_{A}$. Now, it is clear that the mapping defined on the Dirac functions by $\pi \mapsto \pi_{b\rightarrow a}$ induces a bijection from $L(\Gamma(A))$ to $L(\Gamma(B))$. So if $x\neq 0$, then $M_{B}\phi_{n}^{\prime} x \neq 0$. This extends to any couple of subsets $(A,B)\in\mathcal{P}(\n)^{2}$ such that $B\not\subset A$, and implies that we cannot take $\phi_{n}^{\prime}$ as embedding operator.

\subsection{Construction of $W_{A}$}
\label{subsec:construction-space-W}

The definition of our embedding operator $\phi_{n}$ requires a supplementary definition. A contiguous subword of a word $\omega = \omega_{1}\dots\omega_{k}\in\Gamma_{n}$ is an expression $\omega_{i}\omega_{i+1}\dots\omega_{i+j}$, with $1\leq i < i+j \leq k$. For $(A,B)\in\mathcal{P}(\n)^{2}$ with $A\subset B$ and $\pi\in\Rank{A}$, we denote by $\Rank{B}[\pi]$ the set of all the words $\sigma\in\Gamma(B)$ that contain $\pi$ as a contiguous subword. For $B = \n$, we denote it by $\Sn[\pi]$ instead of $\Rank{\n}[\pi]$. A contiguous subword being \textit{a fortiori} a subword, $\Rank{B}[\pi] \subset \Rank{B}(\pi)$.

\begin{definition}[Embedding operator $\phi_{n}$ and space $W_{A}$]
Let $\phi_{n}$ be the linear operator $L(\Gamma_{n}) \rightarrow L(\Sn)$ defined on Dirac functions by
\[
\phi_{n} : \pi \mapsto \1{\Sn[\pi]} = \sum_{\sigma\in\Sn[\pi]}\sigma,
\]
and for $A\in\mathcal{P}(\n)$, let $W_{A}$ be the image of $H_{A}$ by $\phi_{n}$, \textit{i.e.}
\[
W_{A} = \phi_{n}(H_{A}).
\]
\end{definition}

\begin{proposition}[Information localization]
\label{information localization}
For $A\in\mathcal{P}(\n)$, $W_{A}$ satisfies conditions \eqref{condition 1} and \eqref{condition 2}:
\[
W_{A}\cap\ker M_{A} = \{0\} \qquad\text{and}\qquad W_{A}\subset\bigcap_{\substack{B\in\mathcal{P}(\n) \\ B\not\supset A}}\ker M_{B}.
\]
\end{proposition}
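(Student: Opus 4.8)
The plan is to verify the two conditions of Proposition \ref{information localization} separately, using the explicit description of $\phi_n$ via contiguous subwords together with the defining property of $H_A$ (that $\varrho_a x = 0$ for all $a\in A$). Throughout I would work with the deletion operators, exploiting the identity $M_B = \varrho_{\n\setminus B}$ from equation \eqref{characterization marginal}, so that both conditions become statements about how $\varrho$ interacts with $\phi_n$.

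\textbf{Condition \eqref{condition 2}: $W_A\subset\ker M_B$ when $B\not\supset A$.} If $B\not\supset A$, then there is some $a\in A\setminus B$. I would first observe that $M_B = \varrho_{\n\setminus B}$ and that $\n\setminus B \ni a$ contains a letter of $A$. The key is to establish a commutation/factorization rule: for $x\in L(\Gamma(A))$ and $a\in A$, applying $M_B=\varrho_{\n\setminus B}$ to $\phi_n x$ can be re-expressed so that a factor $\varrho_a$ acts on $x$ itself. Concretely, because $\phi_n\pi = \1{\Sn[\pi]}$ is built from words containing $\pi$ as a \emph{contiguous} subword, deleting the letters of $\n\setminus A$ and in particular the letter $a$ should reduce to inserting/deleting inside the block $\pi$, producing a term of the form $(\text{embedding})\circ\varrho_a$ applied to $x$, which vanishes since $\varrho_a x = 0$ for $x\in H_A$. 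This is the analogue of the computation $M_B\phi_n'x$ carried out for $\phi_n'$ in the previous subsection, and I expect the contiguity condition to be exactly what removes the spurious extra term $\pi_{b\to a}$ that obstructed $\phi_n'$. Establishing this factorization cleanly — most likely by induction on $\vert\n\setminus B\vert$, peeling off one deletion $\varrho_c$ at a time and tracking whether $c$ lies inside or outside the contiguous block — is where the real work lies.

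\textbf{Condition \eqref{condition 1}: $W_A\cap\ker M_A=\{0\}$.} Here I would compute $M_A\phi_n$ restricted to $H_A$. Since $M_A=\varrho_{\n\setminus A}$ deletes exactly the letters absent from $\pi$, and $\phi_n\pi$ consists of words in which the block $\pi$ appears contiguously surrounded by the letters of $\n\setminus A$, deleting all of $\n\setminus A$ should return a fixed positive multiple of $\pi$. Counting the number of words in $\Sn[\pi]$ — i.e. the number of ways to place the remaining $n-\vert A\vert$ letters around a contiguous block of length $\vert A\vert$ — gives a constant $c_{\vert A\vert}$ independent of $\pi$, so that $M_A\phi_n\pi = c_{\vert A\vert}\,\pi$ and hence $M_A\phi_n = c_{\vert A\vert}\,\mathrm{Id}$ on $L(\Gamma(A))$. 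In particular $M_A$ is injective on $W_A=\phi_n(H_A)$, which is precisely $W_A\cap\ker M_A=\{0\}$ (and incidentally shows $\phi_n$ is injective on each $L(\Gamma(A))$, so $W_A\cong H_A$).

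\textbf{Main obstacle.} The delicate point is the factorization $M_B\phi_n = (\cdots)\circ\varrho_a$ underlying condition \eqref{condition 2}: one must show that deleting a letter $a\in A$ from every word of $\Sn[\pi]$ never creates a cancellation-free residual term outside the image of $\varrho_a$. The fix provided by using \emph{contiguous} rather than arbitrary subwords is that when $a$ is interior to the block it is simply removed from $\pi$ (yielding $\varrho_a\pi$), whereas the problematic overlap that produced $\pi_{b\to a}$ for $\phi_n'$ cannot occur. Making this bookkeeping precise — carefully handling the letters of $\n\setminus A$ that are deleted versus those of $A$, and confirming that all surviving contributions carry a factor of $\varrho_a x = 0$ — is the step I expect to require the most care, and it is the natural place to invoke Lemma \ref{ranking-extension} and the projective-system identities for $\varrho$.
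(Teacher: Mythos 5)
Your proposal is correct and follows essentially the same route as the paper's own proof: condition \eqref{condition 1} via the count $M_{A}\1{\Sn[\pi]} = (n-\vert A\vert+1)!\,\pi$ (since $\Sn[\pi]\subset\Sn(\pi)$), so $M_{A}\phi_{n}$ is a nonzero multiple of the identity on $L(\Gamma(A))$, and condition \eqref{condition 2} by writing $\varrho_{\n\setminus B} = \varrho_{B^{\prime}}\varrho_{a}$ for some $a\in A\setminus B$ and pushing $\varrho_{a}$ through the embedding onto $x$, where it kills $x\in H_{A}$. The factorization you flag as the main obstacle is exactly what the paper packages cleanly as lemma \ref{commutation} (deletion $\varrho_{a}$ commutes with the concatenation operators $\mathfrak{i}_{\omega},\mathfrak{j}_{\omega}$ when $a\notin c(\omega)$) combined with the observation that $\phi_{n}x = \sum_{\omega_{1},\omega_{2}\in\Gamma(\n\setminus A)}\mathfrak{i}_{\omega_{1}}\mathfrak{j}_{\omega_{2}}x$ for $x\in L(\Gamma(A))$, rather than the induction on deletions or lemma \ref{ranking-extension} you suggest.
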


Proposition \ref{information localization} is the major result of this subsection. Not only does it show that spaces $W_{A}$ satisfy the good information localization properties, but it is also one of the key results to prove our multiresolution decomposition of $L(\Sn)$. Its proof relies on the combinatorial properties of operator $\phi_{n}$ and requires some additional definitions.

\begin{definition}[Concatenation product]
The concatenation product of two injective words $\pi = a_{1}\dots a_{r}$ and $\pi^{\prime} = b_{1}\dots b_{s}$ such that $c(\pi)\cap c(\pi^{\prime}) = \emptyset$ is the word $\pi\pi^{\prime} = a_{1}\dots a_{r}b_{1}\dots b_{s}$. It is extended as the bilinear operator $L(\Gamma_{n})\times L(\Gamma_{n})\rightarrow L(\Gamma_{n})$ defined on Dirac functions by
\begin{equation*}
(\pi,\pi^{\prime}) \mapsto \left\{
\begin{aligned}
 \pi\pi^{\prime} \qquad &\text{if }c(\pi)\cap c(\pi^{\prime}) = \emptyset,\\
 0\qquad &\text{otherwise}.
\end{aligned}
\right.
\end{equation*}
\end{definition}

Starting from a word $\pi\in\Gamma_{n}$, the words of $\Rank{B}[\pi]$ for $B\in\mathcal{P}(\n)$ with $c(\pi)\subset B$ are obtained by inserting the elements of $B\setminus c(\pi)$ in all possible ways, either before or after $\pi$, but not inside. Thus it is clear that
\[
\Rank{B}[\pi] = \{ \omega_{1}\pi\omega_{2} \;\vert\; (\omega_{1}, \omega_{2})\in\Gamma(B)^{2},\ c(\omega_{1})\sqcup c(\omega_{2}) = B\setminus c(\pi) \}
\]
and $\vert \Rank{B}[\pi]\vert = (\vert B\vert - \vert \pi\vert + 1)!$.

\begin{example}
\[
\Sym{5}[143] = \{ 25143, 52143, 21435, 51432, 14325, 14352 \}.
\]
\end{example}

The concatenation product for chains allows us to give an even simpler formula for the indicator function of the set $\Rank{B}[\pi]$: $\1{\Rank{B}[\pi]} = \{ \omega_{1}\pi\omega_{2} \;\vert\; (\omega_{1}, \omega_{2})\in\Gamma(B)^{2} \}$. For $\omega\in\Gamma_{n}$, let $\mathfrak{i}_{\omega}$ and $\mathfrak{j}_{\omega}$ be the two operators on $L(\Gamma_{n})$ defined on the Dirac functions by
\begin{equation}
\mathfrak{i}_{\omega} : \pi \mapsto \omega\pi \qquad\text{and}\qquad\mathfrak{j}_{\omega} : \pi \mapsto \pi\omega.
\end{equation}
Operator $\mathfrak{i}_{\omega}$ is simply the insertion of the word $\omega$ at the beginning and $\mathfrak{j}_{\omega}$ at the end. It is clear that they commute and that for all $\pi\in\Gamma_{n}$, $\1{\Rank{B}[\pi]} = \sum_{\omega_{1}, \omega_{2}\in \Gamma(B)^{2}}\mathfrak{i}_{\omega_{1}}\mathfrak{j}_{\omega_{2}}\pi$. This formulation shows that the embedding operator $\phi_{n}$ is simply the sum of operators $\mathfrak{i}_{\omega_{1}}\mathfrak{j}_{\omega_{2}}$ for all $(\omega_{1}, \omega_{2})\in (\Gamma_{n})^{2}$:
\begin{equation}
\label{characterization phi}
\phi_{n} = \sum_{\omega_{1}, \omega_{2}\in \Gamma_{n}}\mathfrak{i}_{\omega_{1}}\mathfrak{j}_{\omega_{2}}.
\end{equation}
Now, the proof of proposition \ref{information localization} relies on this simple but crucial lemma. The technical proof can be found in the Appendix section.

\begin{lemma}
\label{commutation}
For $\omega\in\Gamma_{n}$ and $a\in\n\setminus c(\omega)$,
\[
\varrho_{a}\mathfrak{i}_{\omega} = \mathfrak{i}_{\omega}\varrho_{a} \qquad\text{and}\qquad \varrho_{a}\mathfrak{j}_{\omega} = \mathfrak{j}_{\omega}\varrho_{a}.
\]
\end{lemma}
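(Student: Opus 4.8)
The plan is to verify both identities on the basis of Dirac functions, since $\varrho_a$, $\mathfrak{i}_\omega$ and $\mathfrak{j}_\omega$ are all linear. So I would fix a word $\pi\in\Gamma_{n}$ and compare $\varrho_{a}\mathfrak{i}_{\omega}\pi$ with $\mathfrak{i}_{\omega}\varrho_{a}\pi$, deferring the statement for $\mathfrak{j}_{\omega}$ to a symmetric argument at the end. The single hypothesis I would keep in play throughout is $a\notin c(\omega)$: it guarantees that $a$, whenever it occurs in a concatenation $\omega\pi$ (resp. $\pi\omega$), necessarily sits in the $\pi$-block, and that deleting $a$ can never destroy a content-overlap between $\omega$ and $\pi$.

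The first split is on whether the concatenation product is nonzero, i.e. whether $c(\omega)\cap c(\pi)=\emptyset$. If $c(\omega)\cap c(\pi)\neq\emptyset$, then $\mathfrak{i}_{\omega}\pi=0$ by definition of the concatenation product, so the left-hand side is $\varrho_{a}0=0$; on the right-hand side I would observe that any $b\in c(\omega)\cap c(\pi)$ satisfies $b\neq a$ (since $a\notin c(\omega)$), hence $b$ survives in $c(\varrho_{a}\pi)$, so $c(\omega)\cap c(\varrho_{a}\pi)\neq\emptyset$ and $\mathfrak{i}_{\omega}\varrho_{a}\pi=0$ as well. Both sides vanish.

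In the complementary case $c(\omega)\cap c(\pi)=\emptyset$, the word $\omega\pi$ is genuine, and I would sub-split on whether $a\in c(\pi)$. If $a\in c(\pi)$, then since $a\notin c(\omega)$ the letter $a$ lies in the $\pi$-part of $\omega\pi$, so $\varrho_{a}(\omega\pi)=\omega(\pi\setminus\{a\})$; on the other side $\varrho_{a}\pi=\pi\setminus\{a\}$, and because $c(\pi\setminus\{a\})\subset c(\pi)$ remains disjoint from $c(\omega)$, one gets $\mathfrak{i}_{\omega}(\pi\setminus\{a\})=\omega(\pi\setminus\{a\})$, matching. If $a\notin c(\pi)$, then $a\notin c(\omega\pi)$, so $\varrho_{a}$ acts as the identity on both $\omega\pi$ and $\pi$, and both sides equal $\omega\pi$. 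This exhausts the cases and establishes $\varrho_{a}\mathfrak{i}_{\omega}=\mathfrak{i}_{\omega}\varrho_{a}$ on Dirac functions, hence everywhere by linearity.

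Finally, the identity $\varrho_{a}\mathfrak{j}_{\omega}=\mathfrak{j}_{\omega}\varrho_{a}$ follows by the identical case analysis with $\pi\omega$ in place of $\omega\pi$: the only property used above is that the letter $a$, when present, lies in the $\pi$-block and never in $\omega$, which is equally true for right-concatenation. I do not expect any genuine obstacle here; the one point that deserves care is the vanishing case, where one must invoke $a\notin c(\omega)$ to argue that deleting $a$ cannot render the contents of $\omega$ and $\pi$ disjoint, so that the two sides stay simultaneously zero.
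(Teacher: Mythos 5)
Your proof is correct and takes essentially the same route as the paper's: a case analysis on Dirac functions that splits on whether $c(\omega)\cap c(\pi)=\emptyset$, using $a\notin c(\omega)$ both to keep the two sides simultaneously zero in the overlapping case and to ensure that $a$ can only be deleted from the $\pi$-block in the disjoint case. Your additional sub-split on whether $a\in c(\pi)$ merely spells out what the paper compresses into the single sentence ``it can only be deleted in the word $\omega\pi$ if it is deleted from $\pi$.''
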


\begin{proof}[Proof of proposition \ref{information localization}]
Let $A\in\mathcal{P}(\n)$, $f\in W_{A}$ and $x\in H_{A}$ such that $f = \phi_{n} x$. We have
\[
M_{A}f = M_{A}\phi_{n} x = \sum_{\pi\in\Rank{A}}x(\pi)M_{A}\1{\Sn[\pi]} = (n-\vert A\vert +1)!\sum_{\pi\in\Rank{A}}x(\pi)\pi = (n-\vert A\vert +1)!\ x,
\]
because $\Sn[\pi]\subset\Sn(\pi)$ and $\vert\Sn[\pi]\vert = (n-\vert A\vert+1)!$ for all $\pi\in\Rank{A}$. Therefore if $f\in\ker M_{A}$, $x = 0$ and so $f = 0$. This proves that $W_{A}\cap\ker M_{A} = \{0\}$. To prove the second part, first observe that if $c(\omega)\cap A \neq \emptyset$, $\mathfrak{i}_{\omega}\pi = 0$ for all $\pi\in\Rank{A}$ and thus $\mathfrak{i}_{\omega}x = 0$ (equivalently, $\mathfrak{j}_{\omega}x = 0$). Hence, using equation \eqref{characterization phi}, we have
\[
\phi_{n} x = \sum_{\omega_{1}, \omega_{2}\in \Gamma_{n}}\mathfrak{i}_{\omega_{1}}\mathfrak{j}_{\omega_{2}} x = \sum_{\omega_{1}, \omega_{2}\in \Gamma(\n\setminus A)}\mathfrak{i}_{\omega_{1}}\mathfrak{j}_{\omega_{2}} x.
\]
Now, let $B\in\mathcal{P}(\n)$ such that $B\not\supset A$. We want to show that $M_{B}f = 0$, \textit{i.e.} that $\varrho_{\n\setminus B}\phi_{n} x = 0$. Since $B\not\supset A$, there exists $a\in A$ such that $a\not\in B$, and we can write$\varrho_{\n\setminus B} = \varrho_{B^{\prime}}\varrho_{a}$. Then using lemma \ref{commutation},
\[
\varrho_{\n\setminus B}\phi_{n} x = \varrho_{B^{\prime}}\sum_{\omega_{1}, \omega_{2}\in \Gamma(\n\setminus A)}\varrho_{a}\mathfrak{i}_{\omega_{1}}\mathfrak{j}_{\omega_{2}}x = \varrho_{B^{\prime}}\sum_{\omega_{1}, \omega_{2}\in \Gamma(\n\setminus A)}\mathfrak{i}_{\omega_{1}}\mathfrak{j}_{\omega_{2}}\varrho_{a}x = 0,
\]
because $\varrho_{a}x = 0$ by definition of $H_{A}$.
\end{proof}

\subsection{The decomposition of $L(\Sn)$}
\label{subsec:global-decomposition}

Now that we have constructed the subspaces of $L(\Sn)$ that localize the information specific to each marginal, we show that they constitute a decomposition of the space $L(\Sn)$. Recall that $V^{0}$ is the subspace of $L(\Sn)$ of constant functions. So defining $L_{0}(\Sn) = \{ f\in L(\Sn) \;\vert\; \sum_{\sigma\in\Sn}f(\sigma) = 0\}$, we have
\begin{equation}
\label{decomposition constant functions}
L(\Sn) = V^{0}\oplus L_{0}(\Sn).
\end{equation}

\begin{proposition}
\label{direct sum}
The spaces $(W_{A})_{A\in\mathcal{P}(\n)}$ are in direct sum in $L_{0}(\Sn)$.
\end{proposition}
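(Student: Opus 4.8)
The plan is to derive the whole statement from Proposition \ref{information localization}, whose two conclusions $W_{A}\cap\ker M_{A} = \{0\}$ and $W_{A}\subset\bigcap_{B\not\supset A}\ker M_{B}$ are precisely the separating ingredients one needs. Before addressing directness, I would first check that each $W_{A}$ actually lies in $L_{0}(\Sn)$, so that the claim is well-posed. For $x\in H_{A}$, the defining condition $\varrho_{a}x = 0$ for some $a\in A$ together with $\varrho_{A} = \varrho_{a}\,\varrho_{A\setminus\{a\}}$ yields $\varrho_{A}x = 0$; the Remark following Definition \ref{def-deletion} rewrites this as $\bigl(\sum_{\pi\in\Gamma(A)}x(\pi)\bigr)\overline{0} = 0$, so $\sum_{\pi}x(\pi) = 0$. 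Summing $\phi_{n}x$ over $\Sn$ and using that $\vert\Sn[\pi]\vert = (n-\vert A\vert +1)!$ is independent of $\pi$ gives $\sum_{\sigma\in\Sn}(\phi_{n}x)(\sigma) = (n-\vert A\vert+1)!\sum_{\pi}x(\pi) = 0$, hence $\phi_{n}x\in L_{0}(\Sn)$.

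For directness itself, I would argue by contradiction using a minimal subset. Suppose $\sum_{A\in\mathcal{P}(\n)}f_{A} = 0$ with $f_{A}\in W_{A}$ and not all $f_{A}$ zero, and choose $A_{0}$ minimal for inclusion in $\{A : f_{A}\neq 0\}$. I would then apply the marginal operator $M_{A_{0}}$ to this relation. By condition \eqref{condition 2}, any $A$ with $A\not\subset A_{0}$ satisfies $A_{0}\not\supset A$, so $W_{A}\subset\ker M_{A_{0}}$ and $M_{A_{0}}f_{A} = 0$; only the indices $A\subset A_{0}$ survive. By minimality of $A_{0}$, every $A\subsetneq A_{0}$ has $f_{A} = 0$, so the relation collapses to $M_{A_{0}}f_{A_{0}} = 0$. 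Finally condition \eqref{condition 1} gives $W_{A_{0}}\cap\ker M_{A_{0}} = \{0\}$, forcing $f_{A_{0}} = 0$, a contradiction; hence all $f_{A}$ vanish and the sum is direct.

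There is no serious obstacle once Proposition \ref{information localization} is available: the content is entirely the bookkeeping of choosing the right subset against which to test the relation. The one point requiring care is the direction of the inclusion in condition \eqref{condition 2} — one must test with $M_{A_{0}}$ for the \emph{minimal} $A_{0}$, not some larger set, so that every summand indexed by a set not contained in $A_{0}$ is annihilated, while the minimality choice simultaneously kills the proper subsets of $A_{0}$. These two mechanisms are complementary and together isolate the single term $f_{A_{0}}$, which condition \eqref{condition 1} then shows to be zero.
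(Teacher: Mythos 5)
Your proof is correct and takes essentially the same approach as the paper: the membership $W_{A}\subset L_{0}(\Sn)$ is verified by the identical computation, and your minimal-counterexample argument for directness is just a repackaging of the paper's induction on cardinality, since both isolate a single term by applying $M_{A_{0}}$ for a minimal index and then invoke the two conditions of Proposition \ref{information localization} in exactly the same way.
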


\begin{proof}
First, observe that for $A\in\mathcal{P}(\n)$ and $x\in H_{A}$,
\[
\sum_{\sigma\in\Sn}(\phi_{n}x)(\sigma) = \sum_{\sigma\in\Sn}\sum_{\pi\in\Rank{A}}x(\pi)\1{\Sn[\pi]}(\sigma) = (n - \vert A\vert + 1)!\sum_{\pi\in\Rank{A}}x(\pi) = 0,
\]
because as $x\in H_{A}$, $0 = \varrho_{A}x = \left[\sum_{\pi\in\Rank{A}}x(\pi)\right]\overline{0}$. Hence, $W_{A}\subset L_{0}(\Sn)$. To prove that the spaces $W_{A}$ are in direct sum, let $(f_{A})_{A\in\mathcal{P}(\n)}$ be a family of functions with $f_{A}\in W_{A}$ for each $A\in\mathcal{P}(\n)$, such that
\begin{equation}
\label{sum equal 0}
\sum_{A\in\mathcal{P}(\n)}f_{A} = 0.
\end{equation}
We need to show that $f_{A} = 0$ for all $A\in\mathcal{P}(\n)$. We proceed by induction on the cardinality of $A$. Let $A\in\binom{\n}{2}$. For all $B\in\mathcal{P}(\n)$ different from $A$, we have $A\not\supset B$. Thus, using the second part of proposition \ref{information localization}, $M_{A}f_{B} = 0$ for all $B\in\mathcal{P}(\n)\setminus\{A\}$. Applying $M_{A}$ in equation \eqref{sum equal 0} then gives $M_{A}f_{A} = 0$. This means that $f_{A}\in W_{A}\cap\ker M_{A}$ and so that $f_{A} = 0$, using the first part of proposition \ref{information localization}. Now assume that $f_{A} = 0$ for all $A\in\mathcal{P}(\n)$ such that $\vert A\vert \leq k-1$, with $k\in\{3, \dots, n\}$. Equation \eqref{sum equal 0} then becomes
\begin{equation}
\label{sum equal 0  - 2}
\sum_{\vert A\vert \geq k}f_{A} = 0.
\end{equation}
Let $A\in\binom{\n}{k}$. For all $B\subset\n$ such that $\vert B\vert \geq k$ and different from $A$, we have $A\not\supset B$. Thus, using again proposition \ref{information localization}, $M_{A}f_{B} = 0$, and applying this to equation \eqref{sum equal 0  - 2} gives $M_{A}f_{A}$. We conclude using proposition \ref{information localization} one more time.
\end{proof}

The second step in the proof of our decomposition is a dimensional argument. Notice that for $A\in\binom{\n}{k}$ with $k\in\{ 2, \dots, n\}$, $H_{A}$ is isomorphic to the space
\[
H_{k} = \{ x\in L(\Gamma(\{1, \dots, k\})) \;\vert\; \varrho_{i}x = 0 \text{ for all } i\in\{1, \dots, k\} \}.
\]
Now, it happens that this space is actually closely related to another well-studied space in the algebraic topology literature, namely the top homology space of the complex of injective words (see \cite{Farmer78}, \cite{BW83}, \cite{Reiner04}, \cite{Hanlon04}). The link is made in \cite{Reiner13} (the space $H_{k}$ is denoted by $\ker\pi_{\nk}$), and leads in particular to the following result (see proposition 6.8 and corollary 6.15).

\begin{theorem}[Dimension of $H_{k}$]
\label{topology}
For $k\in\{ 2, \dots, n\}$,
\[
\dim H_{k} = d_{k},
\]
where $d_{k}$ is the number of fixed-point free permutations (also called derangements) on the set $\{ 1, \dots, k\}$.
\end{theorem}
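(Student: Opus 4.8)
The plan is to identify $H_{k}$ with the top reduced homology of the complex of injective words on $\{1,\dots,k\}$ and then to read off its dimension from the face numbers of that complex. Recall that this complex, call it $\Delta_{k}$, has one $(j-1)$-dimensional face for each injective word of length $j$, so that its top-dimensional cells (dimension $k-1$) are exactly the full words $\Gamma(\nk)$, identified with permutations of $\{1,\dots,k\}$. Since there are no cells above dimension $k-1$, the top chain group is $L(\Gamma(\nk))$ and the top homology is simply the kernel of the simplicial boundary $\partial$. The first step is therefore to match $H_{k}=\{x : \varrho_{i}x=0 \text{ for all } i\}$ with this kernel. The operators $\varrho_{i}$ are the \emph{unsigned} face maps (delete the letter $i$), whereas $\partial$ alternates signs according to the position of the deleted letter; one reconciles the two by the standardization sign-twist $x\mapsto\sum_{\omega}\operatorname{sgn}(\omega)\,x(\omega)\,\omega$, which conjugates the collection $(\varrho_{i})_{i}$ into the signed boundary and hence carries $H_{k}$ isomorphically onto $\ker\partial=\tilde{H}_{k-1}(\Delta_{k})$. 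This identification is precisely the correspondence $H_{k}=\ker\pi_{\nk}$ recorded in \cite{Reiner13}.

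Second, I would invoke the fact that $\Delta_{k}$ is Cohen--Macaulay -- indeed shellable, as established in \cite{BW83} -- so that its reduced homology is concentrated in the top degree $k-1$. Consequently the dimension of $H_{k}$ equals, up to a global sign, the reduced Euler characteristic of $\Delta_{k}$, and the whole computation reduces to counting faces.

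Third, the Euler characteristic itself is elementary. The number of injective words of length $j$ over $\{1,\dots,k\}$ is $f_{j-1}=k!/(k-j)!$, so, writing $m=k-j$,
\[
(-1)^{k-1}\dim H_{k}=\tilde{\chi}(\Delta_{k})=\sum_{j=0}^{k}(-1)^{j-1}\frac{k!}{(k-j)!}=-k!\sum_{m=0}^{k}\frac{(-1)^{k-m}}{m!}=(-1)^{k-1}\,k!\sum_{m=0}^{k}\frac{(-1)^{m}}{m!}.
\]
Since $d_{k}=k!\sum_{m=0}^{k}(-1)^{m}/m!$ is the standard inclusion--exclusion formula for the number of derangements of $\{1,\dots,k\}$, cancelling the common sign $(-1)^{k-1}$ yields $\dim H_{k}=d_{k}$.

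The genuinely topological input, and the main obstacle, is the concentration of homology in the top degree, i.e.\ the Cohen--Macaulay (shellability) property of $\Delta_{k}$; by contrast, the bookkeeping showing that the unsigned operators $\varrho_{i}$ cut out a space of the same dimension as the signed cycle space is routine once the sign-twist is in place, and the closing count is just the derangement identity. For a self-contained write-up the cleanest route is simply to import both facts -- the identification $H_{k}=\ker\pi_{\nk}$ and its dimension -- from \cite{Reiner13}, which is exactly the strategy the theorem statement already signals.
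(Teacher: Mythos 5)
Your proposal is correct, but it takes a genuinely different route from the paper, which in fact offers no argument of its own: Theorem \ref{topology} is imported wholesale from \cite{Reiner13} (Proposition 6.8 and Corollary 6.15, where $H_{k}$ appears as $\ker\pi_{\nk}$), whose proof goes through the Hopf trace formula for virtual characters. You instead sketch an actual proof: (i) the sign twist $x\mapsto\sum_{\omega}\operatorname{sgn}(\omega)x(\omega)\,\omega$ does carry $H_{k}$ onto the kernel of the signed top boundary map, since inserting a letter at position $i+1$ rather than $i$ flips the sign of the resulting permutation, so the unsigned conditions $\varrho_{a}x=0$ match the signed ones letter by letter (deletions of distinct letters land in distinct content components, so they can be checked separately); (ii) shellability of the complex of injective words \cite{BW83} concentrates reduced homology in the top degree; (iii) the reduced Euler characteristic then equals $(-1)^{k-1}d_{k}$ by the inclusion--exclusion identity for derangements, and your sign bookkeeping is right. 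As for what each approach buys: your argument is the ``dimension shadow'' of the one in \cite{Reiner13} --- taking traces of the identity in the Hopf trace formula gives exactly your Euler-characteristic computation --- so it is more elementary (no character theory) but yields only $\dim H_{k}$, whereas the equivariant Hopf-trace version also produces the $\Sn$-character of $\bigoplus_{\vert A\vert=k}H_{A}$, which the paper reuses later in Theorem \ref{decomposition-W} to split $W^{k}$ into Specht modules. Note also that your write-up is not more self-contained in an absolute sense: it trades one black box (the Hopf trace computation of \cite{Reiner13}) for another (Bj\"orner--Wachs shellability), though both references appear in the paper's bibliography, so this is a legitimate substitution.
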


As simple as it may seem, this result is far from being trivial. Its proof relies on the topological nature of the partial order of subword inclusion on the complex of injective words and the use of the Hopf trace formula for virtual characters. It is a cornerstone in the proof of our multiresolution decomposition.

\begin{theorem}[Multiresolution decomposition]
\label{item-scale decomposition}
The following decomposition of $L(\Sn)$ holds:
\[
L(\Sn) = V^{0}\oplus\bigoplus_{A\in\mathcal{P}_{2}(\n)} W_{A}.
\]
In addition, $\dim W_{A} = d_{\vert A\vert}$ and $\phi_{n}(H_{A}) = W_{A}$ for all $A\in\mathcal{P}(\n)$.
\end{theorem}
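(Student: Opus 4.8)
The plan is to deduce the decomposition from a dimension count, once $\dim W_{A}$ has been identified.

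First I would pin down $\dim W_{A}$. The computation already carried out in the proof of Proposition \ref{information localization} shows that $M_{A}\phi_{n}x = (n - \vert A\vert + 1)!\, x$ for every $x\in H_{A}$; in particular $\phi_{n}x = 0$ forces $x = 0$, so $\phi_{n}$ is injective on $H_{A}$. Hence $W_{A} = \phi_{n}(H_{A})$ satisfies $\dim W_{A} = \dim H_{A}$, and since $H_{A}\cong H_{\vert A\vert}$, Theorem \ref{topology} yields $\dim W_{A} = d_{\vert A\vert}$. This already disposes of the two ``in addition'' assertions, as $\phi_{n}(H_{A}) = W_{A}$ holds by definition and the dimension equals $d_{\vert A\vert}$.

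Next I would assemble the direct sum. Proposition \ref{direct sum} gives that the family $(W_{A})_{A\in\mathcal{P}(\n)}$ is in direct sum inside $L_{0}(\Sn)$; combined with the splitting $L(\Sn) = V^{0}\oplus L_{0}(\Sn)$ from \eqref{decomposition constant functions}, the sum $V^{0}\oplus\bigoplus_{A\in\mathcal{P}(\n)}W_{A}$ is direct and contained in $L(\Sn)$. The decomposition then follows by matching dimensions: since $\dim L(\Sn) = n!$, it suffices to check that the candidate subspace also has dimension $n!$. Here
\[
\dim\Bigl(V^{0}\oplus\bigoplus_{A\in\mathcal{P}(\n)}W_{A}\Bigr) = 1 + \sum_{A\in\mathcal{P}(\n)}d_{\vert A\vert} = 1 + \sum_{k=2}^{n}\binom{n}{k}d_{k}.
\]
I would invoke the classical derangement identity $n! = \sum_{k=0}^{n}\binom{n}{k}d_{k}$, proved by partitioning each permutation of $\n$ according to its set of fixed points, the restriction to the (size $k$) complement being a derangement, together with $d_{0} = 1$ and $d_{1} = 0$, to rewrite the right-hand side as exactly $1 + \sum_{k=2}^{n}\binom{n}{k}d_{k} = n!$. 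A subspace of $L(\Sn)$ of dimension $n!$ is all of $L(\Sn)$, which closes the argument.

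The one genuinely substantial ingredient is external to this final assembly: it is Theorem \ref{topology}, identifying $\dim H_{k}$ with the derangement number through the top homology of the complex of injective words. Granting that result, together with Propositions \ref{information localization} and \ref{direct sum}, the remaining content here is light, namely the injectivity of $\phi_{n}$ on $H_{A}$ and the derangement counting identity. The only point demanding care is the bookkeeping of the index set $\mathcal{P}(\n)$ of subsets of size at least two: it meshes precisely with $d_{1} = 0$ killing the $k = 1$ term and with $V^{0}$ absorbing the $k = 0$ term, so that the count assembles exactly to $n!$.
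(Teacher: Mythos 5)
Your proof is correct and follows essentially the same route as the paper: injectivity of $\phi_{n}$ restricted to $H_{A}$ combined with Theorem \ref{topology} gives $\dim W_{A} = d_{\vert A\vert}$, and then Proposition \ref{direct sum}, the splitting \eqref{decomposition constant functions}, and the derangement identity $n! = \sum_{k=0}^{n}\binom{n}{k}d_{k}$ close the dimension count. The only cosmetic difference is that you derive injectivity from the marginal identity $M_{A}\phi_{n}x = (n-\vert A\vert+1)!\,x$ established in the proof of Proposition \ref{information localization}, whereas the paper deduces it from the disjointness of the sets $\Sn[\pi]$ for distinct $\pi\in\Rank{A}$; both arguments are valid.
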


\begin{proof}
For $A\in\mathcal{P}(\n)$ and $x\in H_{A}$, $\phi_{n} x = \sum_{\pi\in\Rank{A}}x(\pi)\1{\Sn[\pi]}$ by definition. Since for $(\pi,\pi^{\prime})\in(\Rank{A})^{2}$ such that $\pi\neq\pi^{\prime}$, the sets $\Sn[\pi]$ and $\Sn[\pi^{\prime}]$ are disjoint, it is clear that $\phi_{n} x = 0 \Rightarrow x(\pi) = 0$ for all $\pi\in\Rank{A}$, \textit{i.e.} $x = 0$. This proves that the restriction of $\phi_{n}$ to $H_{A}$ is injective, and thus that $\dim W_{A} \geq \dim H_{A}$, \textit{i.e.} $\dim W_{A} \geq d_{\vert A\vert}$, using theorem \ref{topology}. Now, using proposition \ref{direct sum} and equation \eqref{decomposition constant functions}, we obtain
\[
\dim \left[V^{0}\oplus\bigoplus_{A\in\mathcal{P}_{2}(\n)} W_{A}\right] \geq 1 + \sum_{k=2}^{n}\binom{n}{k}d_{k} = \sum_{k=0}^{n}\binom{n}{k}d_{n-k} = n!,
\]
where the last equality results from the observation that the number of permutations with $k$ fixed points is equal to $\textstyle{\binom{n}{k}}d_{n-k}$. Since $\dim L(\Sn) = n!$, this concludes both the proof of the decomposition of $L(\Sn)$ and the dimension of $W_{A}$, and the fact that $\phi_{n}(H_{A}) = W_{A}$ follows.
\end{proof}

This decomposition appears implicitly in \cite{Reiner13}, in the combination of theorem 6.20 and formula (22). It is however defined modulo isomorphism, and not easily usable for applications. Our explicit construction permits a practical use of this decomposition. In particular, it allows to localize the information related to any observation design $\mathcal{A}\subset\mathcal{P}(\n)$, as declared in the introduction.

\begin{corollary}
\label{information decomposition}
For any subset $A\in\mathcal{P}(\n)$,
\begin{equation*}
L(\Sn) = \ker M_{A} \oplus \left[V^{0}\oplus\bigoplus_{B\in\mathcal{P}(A)} W_{B}\right],
\end{equation*}
and for any observation design $\mathcal{A}\subset\mathcal{P}(\n)$,
\begin{equation*}
L(\Sn) = \ker M_{\mathcal{A}} \oplus \left[V^{0}\oplus\bigoplus_{B\in\bigcup_{A\in\mathcal{A}}\mathcal{P}(A)} W_{B}\right].
\end{equation*}
\end{corollary}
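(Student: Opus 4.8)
The plan is to read both identities off the global decomposition of Theorem~\ref{item-scale decomposition} by splitting the index family $\mathcal{P}(\n)$ according to inclusion in $A$ (resp.\ in the members of $\mathcal{A}$). Throughout I use that $\{B\in\mathcal{P}(\n)\;\vert\;B\subset A\}=\mathcal{P}(A)$, so that $\mathcal{P}(\n)$ is the disjoint union of $\mathcal{P}(A)$ and $\{B\in\mathcal{P}(\n)\;\vert\;B\not\subset A\}$.

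First I would fix $A\in\mathcal{P}(\n)$ and set $Z_A=\bigoplus_{B\not\subset A}W_B$, the sum running over $B\in\mathcal{P}(\n)$ with $B\not\subset A$. For such $B$ one has $A\not\supset B$, so the second part of Proposition~\ref{information localization} gives $W_B\subset\ker M_A$, whence $Z_A\subset\ker M_A$. To promote this inclusion to an equality I would compare dimensions. Since $\Pi_A$ is a marginal transformation, $M_A$ is onto $L(\Rank{A})$ (a preimage of any target is built by choosing one representative in each block of the partition of Lemma~\ref{combinatorics}), so $\dim\ker M_A=n!-\vert A\vert!$. On the other hand Theorem~\ref{item-scale decomposition} gives $\dim W_B=d_{\vert B\vert}$, hence $\sum_{B\in\mathcal{P}(\n)}\dim W_B=n!-1$ and, by the derangement identity $\sum_{j=0}^{m}\binom{m}{j}d_j=m!$ already used to prove Theorem~\ref{item-scale decomposition} (together with $d_0=1$, $d_1=0$), $\sum_{B\in\mathcal{P}(A)}\dim W_B=\sum_{j=2}^{\vert A\vert}\binom{\vert A\vert}{j}d_j=\vert A\vert!-1$. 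Subtracting yields $\dim Z_A=(n!-1)-(\vert A\vert!-1)=n!-\vert A\vert!=\dim\ker M_A$, so $\ker M_A=Z_A=\bigoplus_{B\not\subset A}W_B$. The first identity is then immediate: rewriting the decomposition of Theorem~\ref{item-scale decomposition} as $L(\Sn)=\bigl[V^{0}\oplus\bigoplus_{B\in\mathcal{P}(A)}W_B\bigr]\oplus Z_A$ and substituting $Z_A=\ker M_A$ gives exactly $L(\Sn)=\ker M_A\oplus\bigl[V^{0}\oplus\bigoplus_{B\in\mathcal{P}(A)}W_B\bigr]$.

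For a general observation design $\mathcal{A}$ I would exploit $\ker M_{\mathcal{A}}=\bigcap_{A\in\mathcal{A}}\ker M_A$ together with the exact description just obtained. Each $\ker M_A=\bigoplus_{B\in S_A}W_B$ is the sub-sum of the internal direct sum $L_0(\Sn)=\bigoplus_{B\in\mathcal{P}(\n)}W_B$ supported on $S_A=\{B\in\mathcal{P}(\n)\;\vert\;B\not\subset A\}$. By uniqueness of the expansion in a direct sum, an element lies in $\bigcap_{A\in\mathcal{A}}\bigoplus_{B\in S_A}W_B$ iff every coordinate outside $\bigcap_{A\in\mathcal{A}}S_A$ vanishes, i.e.\ $\ker M_{\mathcal{A}}=\bigoplus_{B\in\bigcap_A S_A}W_B$. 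Here $\bigcap_{A\in\mathcal{A}}S_A=\{B\in\mathcal{P}(\n)\;\vert\;B\not\subset A\text{ for all }A\in\mathcal{A}\}=\mathcal{P}(\n)\setminus\bigcup_{A\in\mathcal{A}}\mathcal{P}(A)$. Splitting $\mathcal{P}(\n)$ into $\bigcup_{A\in\mathcal{A}}\mathcal{P}(A)$ and its complement in the decomposition of Theorem~\ref{item-scale decomposition} then gives $L(\Sn)=\bigl[V^{0}\oplus\bigoplus_{B\in\bigcup_{A\in\mathcal{A}}\mathcal{P}(A)}W_B\bigr]\oplus\ker M_{\mathcal{A}}$, as claimed.

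The deep inputs (Proposition~\ref{information localization} and the dimension count $\dim W_B=d_{\vert B\vert}$, itself resting on Theorem~\ref{topology}) are already available, so the corollary is essentially bookkeeping. The two points deserving care are the surjectivity of $M_A$, needed to pin down $\dim\ker M_A$, and the passage from the single-subset identity to the design case: the latter hinges on the fact that each $\ker M_A$ is a coordinate subspace of the fixed direct sum $\bigoplus_{B}W_B$, so that intersecting the kernels amounts to intersecting their index sets. I expect this last step to be the only place a reader might ask for additional detail; it could alternatively be carried out by intersecting the kernels one at a time, checking at each stage that the surviving sum is again a coordinate subspace of the global decomposition.
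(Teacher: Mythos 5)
Your proof is correct. The second identity is obtained exactly as in the paper: each $\ker M_{A}$ is a coordinate subspace (a sub-sum) of the fixed decomposition $L(\Sn)=V^{0}\oplus\bigoplus_{B\in\mathcal{P}(\n)}W_{B}$, so intersecting the kernels over $A\in\mathcal{A}$ amounts to intersecting their index sets; your justification of this step via uniqueness of coordinates in a direct sum is precisely what the paper's one-line computation tacitly uses. Where you genuinely diverge is in establishing $\ker M_{A}=\bigoplus_{B\not\subset A}W_{B}$. The paper asserts that $\ker M_{A}$ splits along the multiresolution decomposition, $\ker M_{A}=(\ker M_{A}\cap V^{0})\oplus\bigoplus_{B}(\ker M_{A}\cap W_{B})$, and then computes each intersection, using Proposition~\ref{refinement} and the first part of Proposition~\ref{information localization} to kill the terms with $B\in\mathcal{P}(A)$, and the second part to keep $W_{B}\subset\ker M_{A}$ for $B\not\subset A$. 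You instead prove only the inclusion $\bigoplus_{B\not\subset A}W_{B}\subset\ker M_{A}$ (same appeal to the second part of Proposition~\ref{information localization}) and then force equality by a dimension count: $\dim\ker M_{A}=n!-\vert A\vert!$ from the surjectivity of $M_{A}$, correctly reduced to the fiber partition of Lemma~\ref{combinatorics}, and $\dim\bigl[\bigoplus_{B\not\subset A}W_{B}\bigr]=n!-\vert A\vert!$ from $\dim W_{B}=d_{\vert B\vert}$ and the derangement identity $\sum_{j=0}^{m}\binom{m}{j}d_{j}=m!$. This trade is worthwhile: the paper's opening display is not a general fact about subspaces of direct sums (the diagonal of $\mathbb{R}^{2}=\mathbb{R}e_{1}\oplus\mathbb{R}e_{2}$ meets each factor trivially yet is nonzero), so as written it calls for a justification the paper never supplies; your inclusion-plus-dimension-count argument sidesteps that issue entirely, at the modest cost of invoking the surjectivity of $M_{A}$ and the numerical input $\dim W_{B}=d_{\vert B\vert}$ from Theorem~\ref{item-scale decomposition}, both of which are available. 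The paper's route, in exchange, never needs dimensions and stays purely at the level of the localization properties, which is why it reads as a formal consequence of Propositions~\ref{refinement} and~\ref{information localization}.
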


\begin{proof}
Let $A\in\mathcal{P}(\n)$. By theorem \ref{item-scale decomposition}, we have
\[
\ker M_{A} = \left(\ker M_{A}\cap V^{0}\right)\oplus\bigoplus_{B\in\mathcal{P}(\n)}\left(\ker M_{A}\cap W_{B}\right).
\]
It is clear that $\ker M_{A}\cap V^{0} = \{0\}$ ($M_{A}$ maps constant functions on $\Sn$ to constant functions on $\Rank{A}$). Moreover, for $B\in\mathcal{P}(A)$, $\ker M_{A}\cap W_{B}\subset\ker M_{B}\cap W_{B} = \{0\}$, using proposition \ref{refinement} and the first part of proposition \ref{information localization}. At last, for all $B\in\mathcal{P}(\n)\setminus\mathcal{P}(A)$, $A\not\supset B$, and thus $W_{B}\subset \ker M_{A}$, using the second part of proposition \ref{information localization}. This means that $\ker M_{A} = \bigoplus_{B\in\mathcal{P}(\n)\setminus\mathcal{P}(A)}W_{B}$, and the first part of corollary \ref{information decomposition} follows. The second part results from the calculation
\[
\ker M_{\mathcal{A}} = \bigcap_{A\in\mathcal{A}}\ker M_{A} = \bigcap_{A\in\mathcal{A}}\bigoplus_{B\in\mathcal{P}(\n)\setminus\mathcal{P}(A)}W_{B} = \bigoplus_{B\in\mathcal{P}(\n)\setminus\bigcup_{A\in\mathcal{A}}\mathcal{P}(A)}W_{B}.
\]
\end{proof}

\begin{example}
Let's consider an example with $n=4$. The multiresolution decomposition of $L(\Sym{4})$ is given by the following representation.
\begin{center}
$W_{\{1,2,3,4\}}$\\
\vspace{0.1cm}
$W_{\{ 1,2,3\}} \oplus W_{\{ 1,2,4\}} \oplus W_{\{ 1,3,4\}} \oplus W_{\{ 2,3,4\}}$\\
\vspace{0.1cm}
$W_{\{ 1,2\}} \oplus W_{\{ 1,3\}} \oplus W_{\{ 1,4\}} \oplus W_{\{ 2,3\}} \oplus W_{\{ 2,4\}} \oplus W_{\{ 3,4\}}$\\
\vspace{0.1cm}
$V^{0}$
\end{center}
The spaces in bold contain the information related to the observation of marginals on $\{1,2,3\}$ in this representation,
\begin{center}
$W_{\{1,2,3,4\}}$\\
\vspace{0.1cm}
$\mathbf{W_{\{ 1,2,3\}}} \oplus W_{\{ 1,2,4\}} \oplus W_{\{ 1,3,4\}} \oplus W_{\{ 2,3,4\}}$\\
\vspace{0.1cm}
$\mathbf{W_{\{ 1,2\}}} \oplus \mathbf{W_{\{ 1,3\}}} \oplus W_{\{ 1,4\}} \oplus \mathbf{W_{\{ 2,3\}}} \oplus W_{\{ 2,4\}} \oplus W_{\{ 3,4\}}$\\
\vspace{0.1cm}
$\mathbf{V^{0}}$
\end{center}
to the observation of marginals on $\{1,3,4\}$ in this one,
\begin{center}
$W_{\{1,2,3,4\}}$\\
\vspace{0.1cm}
$W_{\{ 1,2,3\}} \oplus W_{\{ 1,2,4\}} \oplus \mathbf{W_{\{ 1,3,4\}}} \oplus W_{\{ 2,3,4\}}$\\
\vspace{0.1cm}
$W_{\{ 1,2\}} \oplus \mathbf{W_{\{ 1,3\}}} \oplus \mathbf{W_{\{ 1,4\}}} \oplus W_{\{ 2,3\}} \oplus W_{\{ 2,4\}} \oplus \mathbf{W_{\{ 3,4\}}}$\\
\vspace{0.1cm}
$\mathbf{V^{0}}$
\end{center}
and to the observation of marginals of the observation design $\{\{1,2,3\}, \{1,3,4\}\}$ in this final representation.
\begin{center}
$W_{\{1,2,3,4\}}$\\
\vspace{0.1cm}
$\mathbf{W_{\{ 1,2,3\}}} \oplus W_{\{ 1,2,4\}} \oplus \mathbf{W_{\{ 1,3,4\}}} \oplus W_{\{ 2,3,4\}}$\\
\vspace{0.1cm}
$\mathbf{W_{\{ 1,2\}}} \oplus \mathbf{W_{\{ 1,3\}}} \oplus \mathbf{W_{\{ 1,4\}}} \oplus \mathbf{W_{\{ 2,3\}}} \oplus W_{\{ 2,4\}} \oplus \mathbf{W_{\{ 3,4\}}}$\\
\vspace{0.1cm}
$\mathbf{V^{0}}$
\end{center}
\end{example}

From a practical point of view, if we observe $(f_{A})_{A\in\mathcal{A}}\in\mathbb{M}_{\mathcal{A}}$ then by corollary \ref{information decomposition}, there exists a unique $f\in V^{0}\oplus\bigoplus_{B\in\bigcup_{A\in\mathcal{A}}\mathcal{P}(A)} W_{B}$ such that $M_{\mathcal{A}}f = (f_{A})_{A\in\mathcal{A}}$. Furthermore, if
\[
f = \tilde{f}_{0} + \sum_{B\in\bigcup_{A\in\mathcal{A}}\mathcal{P}(A)}\tilde{f}_{B}
\]
is the decomposition of $f$ corresponding to $\bigoplus_{B\in\bigcup_{A\in\mathcal{A}}\mathcal{P}(A)} W_{B}$, we obtain the wanted relation \eqref{objective}: for any $A\in\mathcal{A}$,
\[
f_{A} = M_{A}\left[\tilde{f}_{0} + \sum_{B\in\mathcal{P}(A)}\tilde{f}_{B}\right].
\]

\subsection{Multiresolution analysis}
\label{subsec:multiresolution-analysis}

Until now, we have only used the expression ``multiresolution decomposition'', not ``multiresolution analysis''. The latter has indeed a specific mathematical definitions, first formalized in \cite{Meyer1992} and \cite{MallatMulti} (it is called ``multiresolution approximation'' in the latter) for the space $L^{2}(\mathbb{R})$. A multiresolution analysis of $L^{2}(\mathbb{R})$ is a sequence $(\tilde{V}^{j})_{j\in\mathbb{Z}}$ of closed subspaces of $L^{2}(\mathbb{R})$ such that:
\begin{enumerate}
	\item $\tilde{V}^{j}\subset \tilde{V}^{j+1}$ for all $j\in\mathbb{Z}$
	\item $\bigcup_{j\in\mathbb{Z}}\tilde{V}^{j} = L^{2}(\mathbb{R})$ and $\bigcap_{j\in\mathbb{Z}}\tilde{V}^{j} = \{0\}$
	\item $f(x)\in \tilde{V}^{j} \Leftrightarrow f(2x)\in \tilde{V}^{j+1}$ for all $j\in\mathbb{Z}$
	\item $f(x)\in \tilde{V}^{j} \Leftrightarrow f(x-2^{-j}k)\in \tilde{V}^{j}$ for all $k\in\mathbb{Z}$
	\item There exists $g\in \tilde{V}^{0}$ such that $(g(x-k))_{k\in\mathbb{Z}}$ is a Riesz basis of $\tilde{V}^{0}$.
\end{enumerate}
In order to define an analogous definition for $L(\Sn)$, we get back to the general principles behind it. The idea is that the index $j$ represents a scale, and each space $\tilde{V}^{j}$ contains the information of all scales lower than $j$, thus $\tilde{V}^{j}\subset \tilde{V}^{j+1}$. In finite dimension, the number of scales is necessarily finite, and we request that the space of largest scale be equal to the full space (we can request that the space of lower scale be $\{0\}$ but it is useless). The principle of multiresolution analysis is not only to have a nested sequence of subspaces corresponding to different scales, but also to define the operators that leave a space $\tilde{V}^{j}$ invariant and the ones that send from a space $\tilde{V}^{j}$ to $\tilde{V}^{j+1}$ and vice versa. In the case of $L^{2}(\mathbb{R})$, these operators are respectively the scaled translation $f(x) \mapsto f(x-2^{-j}k)$ and the dilation $f(x) \mapsto f(2x)$, defined in conditions 3. and 4. If we see a function $f$ as an image, the dilation corresponds to a zoom, and a scaled translation corresponds to a displacement.

To define a multiresolution analysis in our case, we first need a notion of scale. In our construction, the natural notion of scale for the spaces $W_{A}$'s appears clearly on the precedent representations of the multiresolution decomposition of $L(\Sn)$: the cardinality of the indexing subsets. We say that the marginal $p_{A}$ of a probability distribution $p$ on $\Sn$ on a subset $A\in\mathcal{P}(\n)$ is of scale $k$ if $\vert A\vert = k$. This means that $p_{A}$ is a probability distribution over rankings involving $k$ items. The information contained in $p_{A}$ can be decomposed in components of scales $\leq k$, and the projection of $p$ on $W_{A}$ contains the information of scale $k$. For $k\in\{ 2, \dots, n\}$, we define the space $W^{k}$ that contains all the information of scale $k$ by
\begin{equation}
\label{definition Wk}
W^{k} = \bigoplus_{\vert A\vert = k}W_{A}
\end{equation}
and the space $V^{k}$ that contains all the information of scales $\leq k$ by
\begin{equation}
\label{definition Vk}
V^{k} = V^{0}\oplus\bigoplus_{j=2}^{k}\bigoplus_{\vert A\vert = j}W_{A}.
\end{equation}
We thus have
\begin{equation}
\label{nested sequence}
V^{0} \subset V^{2} \subset V^{3} \subset \dots \subset V^{n} = L(\Sn) \qquad\text{and}\qquad L(\Sn) = V^{0}\oplus\bigoplus_{k=2}^{n}W^{k}.
\end{equation}
The space $W^{k}$ represent the information gained at scale $k$, and for that we can call it a detail space by analogy with multiresolution analysis on $L^{2}(\mathbb{R})$. In the present case however, the decomposition of $L(\Sn)$ into detail spaces is not orthogonal, as we shall see in section \ref{sec:wavelet}.

The second step in the definition of a multiresolution analysis is the construction of operators of ``zoom'' and ``displacement''. Translations on $\mathbb{R}$ are given by the additive action of $\mathbb{R}$ on itself, \textit{i.e.} are of the form $x \mapsto x + a$, with $a\in\mathbb{R}$. The associated translations on $L^{2}(\mathbb{R})$ are then of the form $f \mapsto f(. - a)$, so that the indicator function of a singleton $\{x\}$ is sent to the indicator function of the singleton $\{x+a\}$. In the case of injective words, we consider the canonical action of $\Sn$ on $\Gamma_{n}$, defined by $\pi \mapsto\sigma_{0}(\pi)$, where for $\sigma_{0}\in\Sn$ and $\pi = \pi_{1}\dots\pi_{k}\in\Gamma_{n}$, $\sigma_{0}(\pi)$ is the injective word $\sigma_{0}(\pi_{1})\dots\sigma_{0}(\pi_{k})$. We then define the associated translations on $L(\Gamma_{n})$ as the linear operators $T_{\sigma_{0}}$ defined on Dirac functions by
\begin{equation}
T_{\sigma_{0}}\pi = \sigma_{0}(\pi),
\end{equation}
for $\sigma_{0}\in\Sn$. We could still denote the translation operator by $\sigma_{0}$ but we choose the notation $T_{\sigma_{0}}$ for clarity's sake. It is easy to see that the orbits of the action are the $\Gamma^{k}$, for $k\in\{0, \dots, n\}$. Translation operators thus stabilize each space $L(\Gamma^{k})$, and in particular $L(\Gamma^{n}) = L(\Sn)$. We still denote by $T_{\sigma_{0}}$ the induced operator. By construction, the operator (and its induced operators) $T_{\sigma_{0}}$ is invertible with inverse $T_{\sigma_{0}}^{-1} = T_{\sigma_{0}^{-1}}$, for any $\sigma_{0}\in\Sn$.

\begin{remark}
If $\pi = \pi_{1}\dots\pi_{n}\in\Gamma^{n}$ is seen as a permutation, then $\pi_{i} = \pi^{-1}(i)$ for $i\in\n$ and $\sigma_{0}(\pi)$ is the injective word associated to the permutation $\pi\sigma_{0}^{-1}$. Translation $T_{\sigma_{0}}$ on $L(\Gamma^{n}) = L(\Sn)$ can thus also be defined by $T_{\sigma_{0}}f(\sigma) = f(\sigma\sigma_{0})$. The mapping $\sigma_{0} \mapsto T_{\sigma_{0}}$ is called the right regular representation in group representation theory.
\end{remark}

\begin{lemma}
\label{properties translations}
Let $\sigma_{0}\in\Sn$, $\omega\in\Gamma_{n}$ and $a\in\n$.
\begin{enumerate}
	\item $T_{\sigma_{0}}\varrho_{a} = \varrho_{\sigma_{0}(a)}T_{\sigma_{0}}$.
	\item $T_{\sigma_{0}}\mathfrak{i}_{\omega} = \mathfrak{i}_{\sigma_{0}(\omega)}T_{\sigma_{0}} \quad\text{and}\quad T_{\sigma_{0}}\mathfrak{j}_{\omega} = \mathfrak{j}_{\sigma_{0}(\omega)}T_{\sigma_{0}}$.
	\item $T_{\sigma_{0}}\phi_{n} = \phi_{n} T_{\sigma_{0}},\quad$ \textit{i.e.} $\quad T_{\sigma_{0}}\1{\Sn[\pi]} = \1{\Sn[\sigma_{0}(\pi)]}\ \text{ for all }\ \pi\in\Gamma_{n}$.
\end{enumerate}
\end{lemma}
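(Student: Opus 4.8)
The plan is to reduce everything to the basis of Dirac functions. All the operators appearing in the statement are linear, so it suffices to establish each identity on the Dirac functions $\pi\in\Gamma_{n}$, which form a basis of $L(\Gamma_{n})$. Throughout, the single governing fact is that $\sigma_{0}$ acts as a bijection of $\n$ and hence induces a bijection $\omega\mapsto\sigma_{0}(\omega)$ of $\Gamma_{n}$ that relabels letters without altering their positions; in particular it preserves word length, sends $c(\omega)$ to $\sigma_{0}(c(\omega))$, and turns disjoint contents into disjoint contents.

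For the first identity, I would fix $\pi\in\Gamma_{n}$ and distinguish whether $a\in c(\pi)$. If $a\in c(\pi)$, then $\sigma_{0}(a)\in c(\sigma_{0}(\pi))$, and deleting the letter $\sigma_{0}(a)$ from $\sigma_{0}(\pi)$ yields precisely $\sigma_{0}$ applied to the word $\pi\setminus\{a\}$, since relabeling commutes with deleting a single letter. If $a\notin c(\pi)$, then $\sigma_{0}(a)\notin c(\sigma_{0}(\pi))$ and both $T_{\sigma_{0}}\varrho_{a}\pi$ and $\varrho_{\sigma_{0}(a)}T_{\sigma_{0}}\pi$ leave $\sigma_{0}(\pi)$ unchanged. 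Either way the two sides coincide. For the second identity I would again fix $\pi$ and split on whether $c(\omega)\cap c(\pi)=\emptyset$. In the disjoint case $c(\sigma_{0}(\omega))\cap c(\sigma_{0}(\pi))=\emptyset$ as well, and both $T_{\sigma_{0}}\mathfrak{i}_{\omega}\pi$ and $\mathfrak{i}_{\sigma_{0}(\omega)}T_{\sigma_{0}}\pi$ equal $\sigma_{0}(\omega)\sigma_{0}(\pi)=\sigma_{0}(\omega\pi)$, using that $\sigma_{0}$ distributes over the concatenation product. When the contents meet, both sides vanish by the convention defining the concatenation product. The argument for $\mathfrak{j}_{\omega}$ is verbatim the same with concatenation on the right.

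For the third identity I would feed the characterization $\phi_{n}=\sum_{\omega_{1},\omega_{2}\in\Gamma_{n}}\mathfrak{i}_{\omega_{1}}\mathfrak{j}_{\omega_{2}}$ from \eqref{characterization phi} into the second identity, applied twice, to obtain $T_{\sigma_{0}}\mathfrak{i}_{\omega_{1}}\mathfrak{j}_{\omega_{2}}=\mathfrak{i}_{\sigma_{0}(\omega_{1})}\mathfrak{j}_{\sigma_{0}(\omega_{2})}T_{\sigma_{0}}$ for each pair. Summing over all $(\omega_{1},\omega_{2})$ and re-indexing the double sum along the bijection $(\omega_{1},\omega_{2})\mapsto(\sigma_{0}(\omega_{1}),\sigma_{0}(\omega_{2}))$ of $\Gamma_{n}^{2}$ leaves the summation range unchanged and recovers $\phi_{n}T_{\sigma_{0}}$. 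The equivalent displayed form $T_{\sigma_{0}}\1{\Sn[\pi]}=\1{\Sn[\sigma_{0}(\pi)]}$ then follows by evaluating on the Dirac function $\pi$ and recalling $\phi_{n}\pi=\1{\Sn[\pi]}$.

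The computation is essentially bookkeeping and presents no genuine obstacle; the only step deserving a moment's care is the re-indexing in the third part, where one must note that $\sigma_{0}$ permutes $\Gamma_{n}$, and hence $\Gamma_{n}^{2}$, so that relabeling the summation variables does not alter the sum. Every verification ultimately rests on the single observation that a bijective relabeling of letters commutes with deletion and with concatenation and respects the content-overlap test.
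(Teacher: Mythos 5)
Your proposal is correct and follows essentially the same route as the paper: the paper dismisses parts \textit{1.} and \textit{2.} as trivially verified (your case analysis on Dirac functions is exactly the omitted bookkeeping), and its proof of part \textit{3.} is precisely your argument, namely plugging the characterization $\phi_{n}=\sum_{\omega_{1},\omega_{2}\in\Gamma_{n}}\mathfrak{i}_{\omega_{1}}\mathfrak{j}_{\omega_{2}}$ into part \textit{2.} and re-indexing the double sum via the bijection $\omega\mapsto\sigma_{0}(\omega)$ of $\Gamma_{n}$.
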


\begin{proof}
\textit Properties {1.} and \textit{2.} are trivially verified. To prove \textit{3.}, observe that $\omega\mapsto \sigma_{0}(\omega)$ being a group action, it is bijective, and thus using equation \eqref{characterization phi} and \textit{2.}, we obtain
\[
T_{\sigma_{0}}\phi_{n} =  \sum_{\omega_{1}, \omega_{2}\in \Gamma_{n}}T_{\sigma_{0}}\mathfrak{i}_{\omega_{1}}\mathfrak{j}_{\omega_{2}} = \sum_{\omega_{1}, \omega_{2}\in \Gamma_{n}}\mathfrak{i}_{\sigma_{0}(\omega_{1})}\mathfrak{j}_{\sigma_{0}(\omega_{2})}T_{\sigma_{0}} = \left[\sum_{\omega_{1}^{\prime}, \omega_{2}^{\prime}\in \Gamma_{n}}\mathfrak{i}_{\omega_{1}^{\prime}}\mathfrak{j}_{\omega_{2}^{\prime}}\right]T_{\sigma_{0}} = \phi_{n} T_{\sigma_{0}}.
\]
\end{proof}

The following proposition shows that translation operators $T_{\sigma_{0}}$ can be seen as ``displacement'' operators adapted to our multiresolution decomposition.

\begin{proposition}[Displacement operator]
\label{displacement operator}
Let $k\in\{2, \dots, \n\}$, $(A,B)\in\binom{\n}{k}^{2}$ and $\sigma_{0}\in\Sn$ such that $\sigma_{0}(A) = B$. Then
\[
T_{\sigma_{0}}(W_{A}) = W_{B}.
\]
\end{proposition}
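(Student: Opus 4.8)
The plan is to reduce the statement to the analogous fact at the level of the spaces $H_A$, where the translation acts transparently, and then transport it through the embedding $\phi_n$ using part 3 of Lemma \ref{properties translations}.

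First I would establish that $T_{\sigma_0}(H_A) = H_B$. Since $\sigma_0$ is a bijection of $\n$ sending $A$ onto $B$, for any word $\pi$ with content $A$ the word $\sigma_0(\pi)$ has content $\sigma_0(A) = B$; hence $T_{\sigma_0}$ restricts to a linear isomorphism from $L(\Gamma(A))$ onto $L(\Gamma(B))$, its inverse being $T_{\sigma_0^{-1}}$. It then remains to check that this isomorphism carries the defining constraints of $H_A$ onto those of $H_B$. Let $x\in H_A$ and let $b\in B$. Because $\sigma_0(A)=B$, there is a unique $a\in A$ with $\sigma_0(a)=b$, and part 1 of Lemma \ref{properties translations} gives $\varrho_b T_{\sigma_0} = \varrho_{\sigma_0(a)}T_{\sigma_0} = T_{\sigma_0}\varrho_a$. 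Applying this to $x$ yields $\varrho_b(T_{\sigma_0}x) = T_{\sigma_0}(\varrho_a x) = 0$, since $\varrho_a x = 0$ by the definition of $H_A$. As $b\in B$ was arbitrary, $T_{\sigma_0}x\in H_B$, so $T_{\sigma_0}(H_A)\subset H_B$. The reverse inclusion follows by applying the same argument to $\sigma_0^{-1}$, which sends $B$ onto $A$: one gets $T_{\sigma_0^{-1}}(H_B)\subset H_A$, i.e. $H_B\subset T_{\sigma_0}(H_A)$. Hence $T_{\sigma_0}(H_A)=H_B$.

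The conclusion is then immediate from part 3 of Lemma \ref{properties translations}, namely $T_{\sigma_0}\phi_n = \phi_n T_{\sigma_0}$:
\[
T_{\sigma_0}(W_A) = T_{\sigma_0}\phi_n(H_A) = \phi_n T_{\sigma_0}(H_A) = \phi_n(H_B) = W_B.
\]
I expect the only point requiring genuine care to be the first step, the identity $T_{\sigma_0}(H_A)=H_B$: everything hinges on matching each deletion constraint $\varrho_b$ on the target side with the corresponding constraint $\varrho_a$ on the source side through the relation $b=\sigma_0(a)$, which is exactly what the commutation rule of Lemma \ref{properties translations}(1) supplies. Once this bijection between the two constraint systems is observed, transporting it through $\phi_n$ is purely formal.
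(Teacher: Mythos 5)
Your proof is correct, and its core coincides with the paper's: both arguments transport the problem to the spaces $H_{A}$ through the commutation $T_{\sigma_{0}}\phi_{n} = \phi_{n}T_{\sigma_{0}}$ (part 3 of Lemma \ref{properties translations}) and verify the deletion constraints via $\varrho_{\sigma_{0}(a)}T_{\sigma_{0}} = T_{\sigma_{0}}\varrho_{a}$ (part 1). The one genuine difference lies in how equality, rather than a mere inclusion, is obtained. The paper proves only $T_{\sigma_{0}}(W_{A})\subset W_{B}$ and concludes by a dimension count: $T_{\sigma_{0}}$ is invertible and $\dim W_{A} = \dim W_{B} = d_{k}$ by Theorem \ref{item-scale decomposition}, a result whose proof rests on the homological computation $\dim H_{k} = d_{k}$ from Theorem \ref{topology}. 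You instead obtain equality directly by running the same constraint-matching argument for $\sigma_{0}^{-1}$, which maps $B$ onto $A$, giving $T_{\sigma_{0}}(H_{A}) = H_{B}$ exactly and hence $T_{\sigma_{0}}(W_{A}) = \phi_{n}(T_{\sigma_{0}}(H_{A})) = \phi_{n}(H_{B}) = W_{B}$ by the definition of the spaces $W$. Your route is therefore slightly more self-contained: it needs nothing beyond Lemma \ref{properties translations} and the definition $W_{A}=\phi_{n}(H_{A})$, whereas the paper's shortcut leans on the dimension theorem (which is itself available at that point in the text, so this is a matter of economy of hypotheses rather than of correctness). Both arguments are valid; the paper's is a line shorter at the cost of that dependency, while yours would survive even in a context where the dimensions of the $W_{A}$'s were not yet known.
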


\begin{proof}
Since $T_{\sigma_{0}}$ is invertible and $\dim W_{A} = \dim W_{B} = d_{k}$ by theorem \ref{item-scale decomposition}, we only need to prove that $T_{\sigma_{0}}(W_{A}) \subset W_{B}$. Let $x\in H_{A}$. Property \textit{3.} in lemma \ref{properties translations} gives $T_{\sigma_{0}}\phi_{n} x = \phi_{n} T_{\sigma_{0}}x$. Thus we just have to show that $T_{\sigma_{0}}x\in H_{B}$. Since $\sigma_{0}$ is a permutation such that $\sigma_{0}(A) = B$, it is clear that $\{ \sigma_{0}(\pi) \;\vert\; \pi\in\Rank{A} \} = \Rank{B}$, and $T_{\sigma_{0}}x\in L(\Rank{B})$. Now, using property \textit{1.} in lemma \ref{properties translations}, we have for any $b\in B$, $\varrho_{b}T_{\sigma_{0}}x = T_{\sigma_{0}}\varrho_{\sigma_{0}^{-1}(b)}x = 0$ because $\sigma_{0}^{-1}(b)\in A$.
\end{proof}

Looking at definitions \eqref{definition Wk} and \eqref{definition Vk}, proposition \ref{displacement operator} immediately gives the following result.

\begin{proposition}[Translation invariance]
\label{translation invariance}
For $k\in\{2, \dots, n\}$, the spaces $W^{k}$ and $V^{k}$ are invariant under all the translations $T_{\sigma_{0}}$, for $\sigma_{0}\in\Sn$.
\end{proposition}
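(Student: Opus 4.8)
The plan is to deduce the result directly from Proposition~\ref{displacement operator}, since the statement concerns the invariance of $W^{k}$ and $V^{k}$ under every translation $T_{\sigma_{0}}$. The key observation is that a translation $T_{\sigma_{0}}$ acts on the indexing subsets of $\binom{\n}{k}$ exactly as $\sigma_{0}$ acts on subsets of $\n$, namely $A \mapsto \sigma_{0}(A)$. First I would fix $k\in\{2,\dots,n\}$ and $\sigma_{0}\in\Sn$, and consider an arbitrary $A\in\binom{\n}{k}$. Setting $B = \sigma_{0}(A)$, we have $B\in\binom{\n}{k}$ since $\sigma_{0}$ is a bijection, and Proposition~\ref{displacement operator} gives $T_{\sigma_{0}}(W_{A}) = W_{B}$.

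For the invariance of $W^{k}$, I would argue that since $T_{\sigma_{0}}$ is linear and $W^{k} = \bigoplus_{\vert A\vert = k}W_{A}$, it suffices to check that each summand $W_{A}$ is sent into $W^{k}$. By the computation above, $T_{\sigma_{0}}(W_{A}) = W_{\sigma_{0}(A)}$, and $W_{\sigma_{0}(A)}$ is itself one of the summands of $W^{k}$ because $\vert\sigma_{0}(A)\vert = k$. Hence $T_{\sigma_{0}}(W^{k}) \subset W^{k}$. Since $\sigma_{0}\mapsto T_{\sigma_{0}}$ sends $\Sn$ into invertible operators (with $T_{\sigma_{0}}^{-1} = T_{\sigma_{0}^{-1}}$, as noted before Lemma~\ref{properties translations}), applying the same inclusion to $\sigma_{0}^{-1}$ yields the reverse inclusion, so $T_{\sigma_{0}}(W^{k}) = W^{k}$.

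For $V^{k} = V^{0}\oplus\bigoplus_{j=2}^{k}\bigoplus_{\vert A\vert = j}W_{A}$, I would note that each detail summand $W^{j}$ with $2\le j\le k$ is invariant by the argument just given, so it only remains to handle $V^{0}$. Since $V^{0}$ is the line of constant functions and any translation $T_{\sigma_{0}}$ merely permutes the arguments of a function on $\Sn$, it fixes every constant function; hence $T_{\sigma_{0}}(V^{0}) = V^{0}$. As $V^{k}$ is a direct sum of subspaces each invariant under $T_{\sigma_{0}}$, the whole space $V^{k}$ is invariant, which completes the argument.

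I do not anticipate a genuine obstacle here: the proposition is essentially an immediate bookkeeping consequence of Proposition~\ref{displacement operator} together with the linearity and invertibility of the translation operators. The only point deserving a moment's care is the treatment of the constant-function component $V^{0}$ in the second assertion, since Proposition~\ref{displacement operator} is stated only for the spaces $W_{A}$ with $\vert A\vert \ge 2$ and says nothing about $V^{0}$; this gap is closed trivially by the explicit observation that translations fix constants.
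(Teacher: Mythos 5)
Your proof is correct and follows essentially the same route as the paper, which simply observes that the invariance of $W^{k}$ and $V^{k}$ follows immediately from Proposition~\ref{displacement operator} together with the definitions \eqref{definition Wk} and \eqref{definition Vk}. Your extra care with the constant-function component $V^{0}$ (fixed by every $T_{\sigma_{0}}$ since translations merely permute Dirac functions) and with using invertibility of $T_{\sigma_{0}}$ to upgrade the inclusion $T_{\sigma_{0}}(W^{k})\subset W^{k}$ to an equality just spells out details the paper leaves implicit.
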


Observe that the space $V^{j}$ is invariant under all translations $T_{\sigma_{0}}$ whereas in the case of the multiresolution analysis on $L^{2}(\mathbb{R})$, the space $\tilde{V}^{j}$ is only invariant under scaled translations $f \mapsto f(. - 2^{-j}k)$. The latter property means that the size of translations is limited by the resolution level. The same interpretation is actually also true in our context: though $V^{j}$ is invariant under all translations $T_{\sigma_{0}}$, they only involve the action of $\Sn$ on the sets $\Gamma^{i}$ for $i\leq j$. The ``size'' of translations on $V^{j}$ is thus inherently limited by the resolution level.

While the construction of our displacement operator is based on the same algebraic objects as for $L^{2}(\mathbb{R})$, namely translations associated to a group action, it is not possible to base the construction of a zooming operator on dilation. This is the bottleneck of any construction of a multiresolution analysis on a discrete space such as $\Sn$, as observed in \cite{Kondor2012}. Hence, there is no simple way to define an operator that allows to change scales, such as $f(x) \mapsto f(2x)$. We can however construct a family of ``dezooming'' operators $\Phi_{k}$ that each project onto the corresponding space $V^{k}$. For $k\in\{ 2, \dots, n\}$, we denote by $M_{k}$ the operator associated to all the marginals of scale $k$, \textit{i.e.} $M_{k} := M_{\binom{\n}{k}}$,
\begin{align*}
M_{k} : L(\Sn) &\rightarrow \bigoplus_{\vert A\vert = k} L(\Rank{A})\\
f &\mapsto (f_{A})_{\vert A\vert = k}.
\end{align*}
Using corollary \ref{information decomposition} for $\mathcal{A} = \binom{\n}{k}$, we have
\[
L(\Sn) = \ker M_{k} \oplus \left[V^{0}\oplus\bigoplus_{B\in\bigcup_{\vert A\vert = k}\mathcal{P}(A)}W_{B}\right] = \ker M_{k} \oplus V^{k}.
\]
Therefore, for any $F\in M_{k}(L(\Sn))$, there exists a unique $f\in V^{k}$ such that $M_{k}f = F$. We denote by $M_{k}^{+}$ the operator from $M_{k}(L(\Sn))$ to $V^{k}$ that sends $F$ to $f$. This is a pseudoinverse of $M_{k}$, but not the Moore-Penrose pseudoinverse because $V^{k}$ is not the orthogonal supplementary of $\ker M_{k}$.

\begin{definition}[Dezooming operator]
Let $\Phi_{0} : f \mapsto (\left\langle f, \1{\Sn}\right\rangle/n!) \1{\Sn}$ be the orthogonal projection on $V^{0}$ and for $k\in\{2, \dots, n\}$,
\[
\Phi_{k} = M_{k}^{+}M_{k}.
\]
\end{definition}

\subsection{Decomposition of the space $W^{k}$ into irreducible components}
\label{subsec:decomposition-space-W}

By proposition \ref{translation invariance}, the space $W^{k}$ with $k\in\{2, \dots, n\}$ is invariant under all the translations $T_{\sigma_{0}}$ for all $\sigma_{0}\in\Sn$. In other words, it is a representation of the symmetric group $\Sn$. It can thus be decomposed as a sum of irreducible representations $S^{\lambda}$. The multiplicity of each irreducible is nonetheless not obvious to compute. This is one of the major results established in \cite{Reiner13}. Its statement requires some definitions.

A Young diagram (or a Ferrer's diagram) of size $n$ is a collection of boxes of the form
\begin{center}
\includegraphics[scale=0.3]{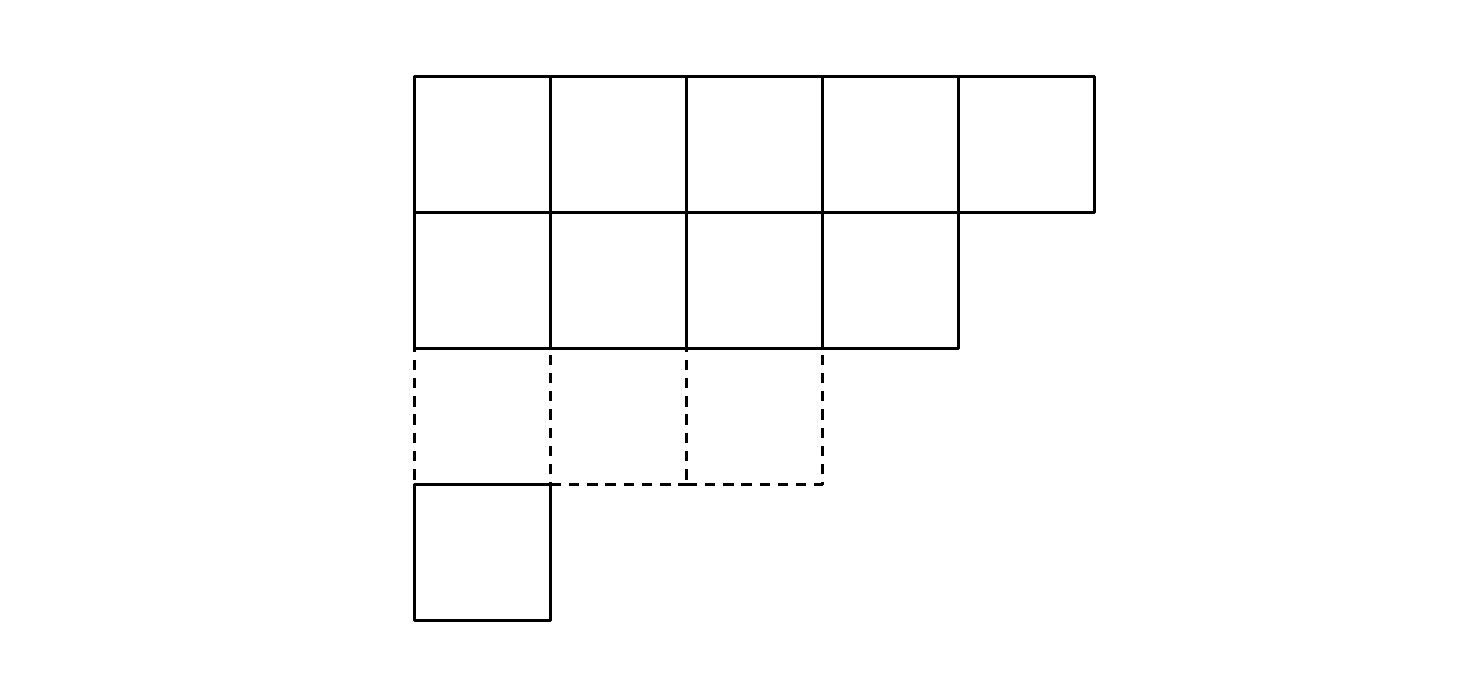}
\end{center}
where if $\lambda_{i}$ denotes the number of boxes in row $i$, then $\lambda = (\lambda_{1}, \dots, \lambda_{r})$, called the shape of the Young diagram, must be a partition of $n$. The total number of boxes of a Young diagram is therefore equal to $n$, and each row contains at most as many boxes as the row above it. A Young tableau is a Young diagram filled with all the integers $1, \dots, n$, one in each boxes. The shape of a Young tableau $Q$, denoted by $\text{shape}(Q)$, is the shape of the associated Young Diagram, it is thus a partition of $n$. There are clearly $n!$ Young tableaux of a given shape $\lambda \vdash n$. A Young tableau is said to be \textit{standard} if the numbers increase along the rows and down the columns.

\begin{example}
In the following figure, the first tableau is standard whereas the second is not.
\begin{center}
\includegraphics[scale=0.3]{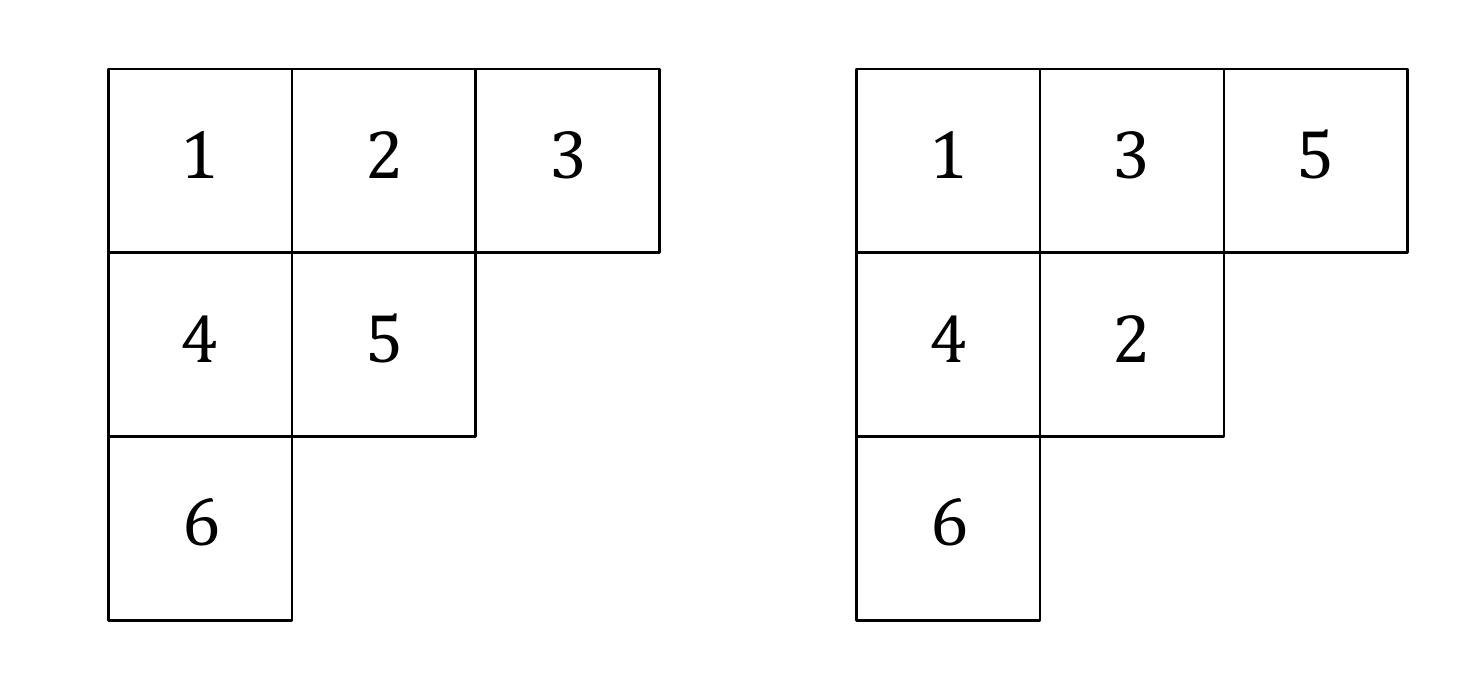}
\end{center}
\end{example}

Notice that a standard Young tableau always have $1$ in its top-left box, and that the box that contains $n$ is necessarily at the end of a row and a column. We denote by $\text{SYT}_{n}$ the set of all standard Young tableaux of size $n$ and by $\text{SYT}_{n}(\lambda) = \{Q\in \text{SYT}_{n} \;\vert\; \text{shape}(Q) = \lambda\}$ the set of standard Young tableaux of shape $\lambda$, for $\lambda\vdash n$. By construction, $\text{SYT}_{n} = \bigsqcup_{\lambda\,\vdash\, n}\text{SYT}_{n}(\lambda)$.  A classic result in the representation theory of the symmetric group is that for all $\lambda\vdash n$, the dimension $d_{\lambda}$ of $S^{\lambda}$, which is also its multiplicity in the decomposition of $L(\Sn)$, is actually equal to the number of standard Young tableaux of shape $\lambda$. Thus the decomposition of $L(\Sn)$ into irreducible representations is given by:
\[
L(\Sn) = \bigoplus_{Q\in\text{SYT}_{n}}S^{\,\text{shape}(Q)}.
\]
Figure \ref{fig:tableaux-decomposition} represents all the standard Young tableaux of size $n = 4$, gathered by shape.

\begin{figure}[h!]
\centering
\includegraphics[scale=0.6]{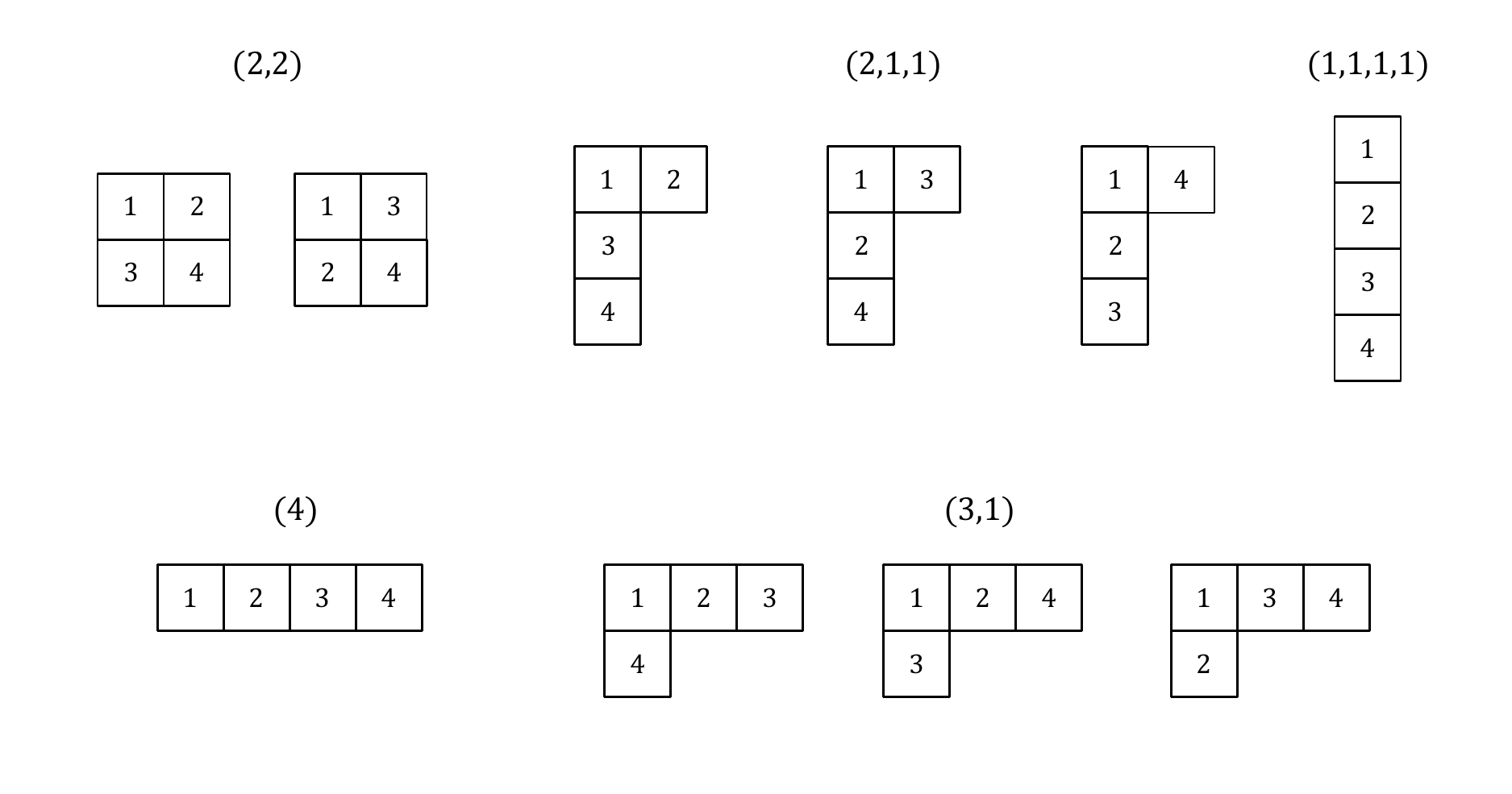}
\caption{Standard Young tableaux of size $n = 4$}
\label{fig:tableaux-decomposition}
\end{figure}

By construction, $L(\Sn) = V^{0}\oplus\bigoplus_{k=2}^{n}W^{k}$, where $V^{0}$ is isomorphic to the Specht module $S^{(n)} = S^{\,\text{shape}(Q_{0})}$, $Q_{0}$ being the unique standard Young tableau of shape $(n)$. So for each $k\in\{2, \dots, n\}$ the decomposition of the space $W^{k}$ must involve a certain subset $T_{k}$ of $\text{SYT}_{n}$, such that $\text{SYT}_{n} = \{Q_{0}\}\sqcup\bigsqcup_{k=2}^{n}T_{k}$. The construction of these subsets is done in \cite{Reiner13}. We reproduce it here. Let $Q$ be a standard Young tableau. Then it contains a unique maximal subtableau of the form
\begin{center}
\includegraphics[scale=0.5]{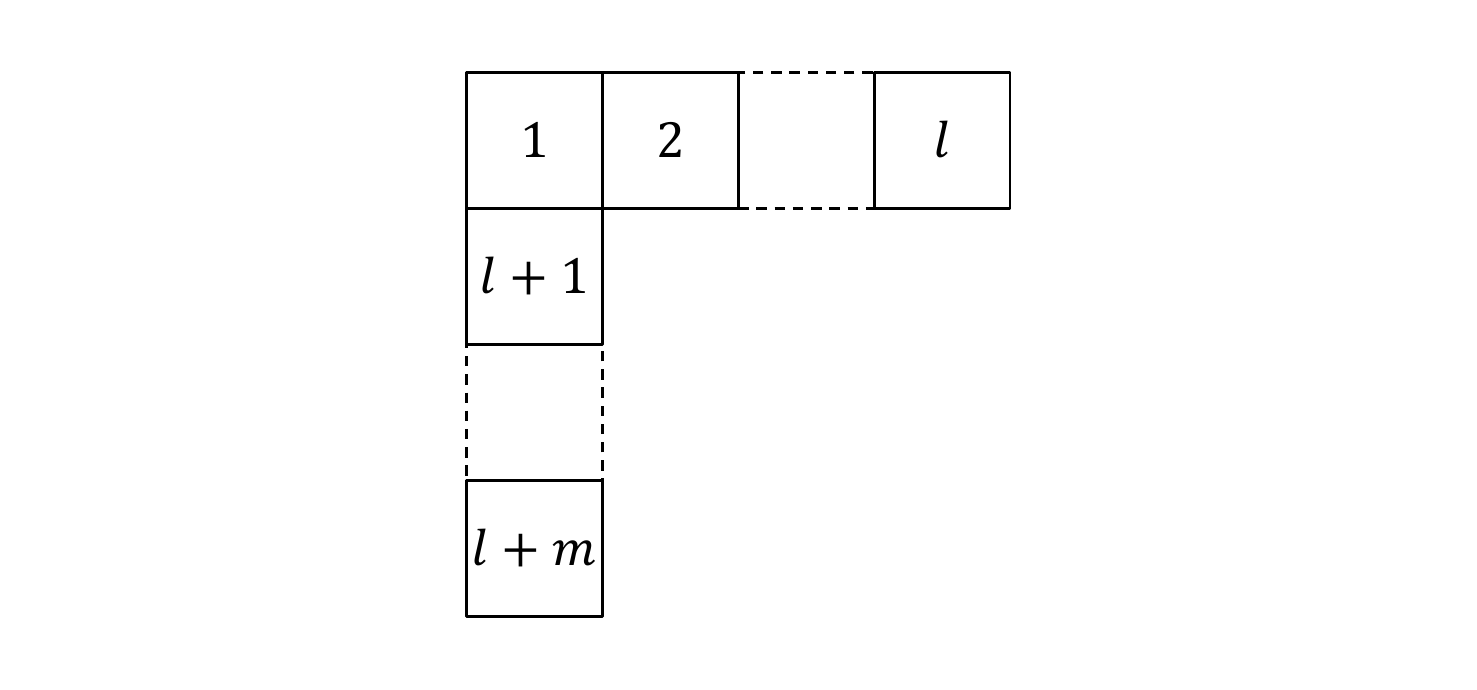}
\end{center}
with $1 \leq l \leq n$ and $0 \leq m \leq n-l$. Define
\begin{equation}
\text{eig}(Q) = \left\{
\begin{aligned}
l &\quad\text{if } m \text{ is even},\\
l-1 &\quad\text{if } m \text{ is odd}.\\
\end{aligned}\right.
\end{equation}
(This definition is given in \cite{Reiner13}, in the proof of Proposition 6.23).

\begin{theorem}[Decomposition of $W^{k}$ into irreducible representations]
\label{decomposition-W}
For $k\in\{2,\dots,n\}$, the following decomposition holds
\[
W^{k} \cong \bigoplus_{\substack{Q\in\text{SYT}_{n} \\ \text{eig}(Q) = n-k}}S^{\,\text{shape}(Q)}.
\]
\end{theorem}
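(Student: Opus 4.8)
The plan is to recognize $W^{k}$ as an induced $\Sn$-representation and then to read off its irreducible constituents from the homology computation underlying \cite{Reiner13}. By Proposition \ref{displacement operator}, for $A\in\binom{\n}{k}$ the translation $T_{\sigma_{0}}$ carries $W_{A}$ to $W_{\sigma_{0}(A)}$; since $\Sn$ acts transitively on $\binom{\n}{k}$, the summands of $W^{k}=\bigoplus_{\vert A\vert=k}W_{A}$ are permuted transitively, and the setwise stabilizer of $A_{0}=\{1,\dots,k\}$ is $\Sym{A_{0}}\times\Sym{\n\setminus A_{0}}$. I would then pin down the action of this stabilizer on $W_{A_{0}}$. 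Part 3 of Lemma \ref{properties translations} shows that the restriction of $\phi_{n}$ to $H_{A_{0}}$ intertwines the translation action, so $W_{A_{0}}\cong H_{A_{0}}$ as a module over the stabilizer; and since every chain in $H_{A_{0}}$ is supported on words with content $A_{0}$, it is fixed by each $T_{\sigma_{0}}$ with $\sigma_{0}\in\Sym{\n\setminus A_{0}}$ (such $\sigma_{0}$ fixes $A_{0}$ pointwise). Hence $\Sym{\n\setminus A_{0}}$ acts trivially, while $\Sym{A_{0}}\cong\Sym{k}$ acts as the top homology representation $H_{k}$. Therefore
\[
W^{k}\;\cong\;\operatorname{Ind}_{\Sym{k}\times\Sym{n-k}}^{\Sn}\bigl(H_{k}\boxtimes\mathbf{1}\bigr),
\]
the outer tensor product of the homology module $H_{k}$ with the trivial $\Sym{n-k}$-module; a dimension count gives $\binom{n}{k}d_{k}=\dim W^{k}$, in agreement with Theorem \ref{item-scale decomposition}.

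With this identification, the decomposition of $W^{k}$ rests on two ingredients. First, the splitting of $H_{k}$ into $\Sym{k}$-irreducibles is the homology representation of the complex of injective words, computed in \cite{Reiner13} (see also \cite{BW83}, \cite{Reiner04}). Second, inducing up to $\Sn$ is governed by Pieri's rule: as the $\Sym{n-k}$-factor carries the trivial module,
\[
\operatorname{Ind}_{\Sym{k}\times\Sym{n-k}}^{\Sn}\bigl(S^{\mu}\boxtimes\mathbf{1}\bigr)\;\cong\;\bigoplus_{\lambda}S^{\lambda},
\]
the sum running over $\lambda\vdash n$ obtained from $\mu$ by adjoining a horizontal strip of $n-k$ boxes. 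Combining the two, the multiplicity of $S^{\lambda}$ in $W^{k}$ becomes a sum of homology multiplicities over the partitions $\mu\vdash k$ from which $\lambda$ grows by a horizontal strip.

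The main obstacle is the final combinatorial step: showing that this multiplicity equals the number of standard Young tableaux $Q$ of shape $\lambda$ with $\operatorname{eig}(Q)=n-k$. This is precisely the content distilled from \cite{Reiner13}, where the statistic $\operatorname{eig}$ is introduced so that, among the copies of $S^{\lambda}$ in $L(\Sn)$ indexed by $\mathrm{SYT}_{n}(\lambda)$, the value $\operatorname{eig}(Q)$ records which detail space $W^{k}$ (or $V^{0}$) the corresponding copy belongs to, the name recording that $\operatorname{eig}(Q)$ appears in \cite{Reiner13} as an eigenvalue. I would therefore either cite Proposition 6.23 of \cite{Reiner13} directly, the matching with our $W^{k}$ being supplied by the induced-module identification above, or reprove it by a descent-type bijection sending a shape-$\mu$ homology generator together with its horizontal strip to a standard tableau of shape $\lambda$, tracking how the first-row run length $l$ and the parity of the first-column continuation $m$ in the maximal subtableau change as the strip is added. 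The delicate bookkeeping lies in confirming that this respects the partition $\mathrm{SYT}_{n}=\{Q_{0}\}\sqcup\bigsqcup_{k}T_{k}$: one checks in particular that no standard tableau has $\operatorname{eig}(Q)=n-1$, matching the absence of a scale-$1$ space, and that the single tableau $Q_{0}$ of shape $(n)$ has $\operatorname{eig}(Q_{0})=n$, accounting for $V^{0}\cong S^{(n)}$. This is where essentially all the difficulty resides.
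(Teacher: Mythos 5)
Your proposal is correct in structure but follows a genuinely different route from the paper's own proof, which is short and citation-heavy: the paper observes (as you do, via part 3 of Lemma \ref{properties translations}) that $\phi_{n}$ intertwines the translations, so that $W^{k}\cong\bigoplus_{\vert A\vert=k}H_{A}$ as $\Sn$-representations, and then invokes \cite{Reiner13} wholesale --- their Theorem 6.20 identifies $\bigoplus_{\vert A\vert=k}\ker\pi_{A}$ with a module $F_{n,n-k}$, and their Theorem 6.26 gives the $\operatorname{eig}$-indexed decomposition of that module, which is exactly the statement to be proved. You instead make the representation-theoretic mechanism explicit: using Proposition \ref{displacement operator} and the stabilizer computation you identify $W^{k}$ as the induced module $\operatorname{Ind}_{\Sym{k}\times\Sym{n-k}}^{\Sn}\bigl(H_{k}\boxtimes\mathbf{1}\bigr)$ (your verification that permutations fixing $A_{0}$ pointwise act trivially on $H_{A_{0}}$ is correct, as is the dimension count $\binom{n}{k}d_{k}$), and then reduce the theorem, via the known decomposition of $H_{k}$ as the top homology of the complex of injective words (see \cite{Reiner04}, \cite{Reiner13}) together with Pieri's rule, to a purely combinatorial identity: the multiplicity-weighted count of pairs $(\mu,\lambda/\mu)$ with $\mu\vdash k$ a constituent of $H_{k}$ and $\lambda/\mu$ a horizontal strip of size $n-k$ equals the number of standard tableaux of shape $\lambda$ with $\operatorname{eig}=n-k$. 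This buys a more transparent picture of where the multiplicities come from, and your sanity checks are right (no tableau has $\operatorname{eig}=n-1$, matching the absence of a scale-$1$ space since $d_{1}=0$; the unique tableau of shape $(n)$ has $\operatorname{eig}=n$, matching $V^{0}\cong S^{(n)}$). But it does not eliminate the dependence on \cite{Reiner13}: the final identity is precisely the combinatorial core of their Proposition 6.23/Theorem 6.26, and your fallback ``descent-type bijection'' is only sketched, not carried out. So both arguments ultimately defer the hard combinatorics to \cite{Reiner13}; the paper defers earlier (at the eigenspace identification), while you defer later, after sound and genuinely instructive induced-module and Pieri reductions.
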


\begin{proof}
For $k\in\{2,\dots,n\}$,
\[
W^{k} = \bigoplus_{\vert A\vert = k}W_{A} = \bigoplus_{\vert A\vert = k}\phi_{n}(H_{A}) = \phi_{n}\left(\bigoplus_{\vert A\vert = k}H_{A}\right) \cong \bigoplus_{\vert A\vert = k}H_{A},
\]
where the two last linear spaces are isomorphic because $\phi_{n}$ is an isomorphism between $H_{A}$ and $W_{A}$ for any $A\in\mathcal{P}(\n)$ by theorem \ref{item-scale decomposition}. Furthermore, point $3.$ of lemma \ref{properties translations} shows that $W^{k}$ and $\bigoplus_{\vert A\vert = k}H_{A}$ are isomorphic as representations of $\Sn$. In the notations of \cite{Reiner13}, $H_{A} = \ker \pi_{A}$, so by their theorem 6.20, $W^{k}\cong F_{n, n-k}$ as representations of $\Sn$. Theorem \ref{decomposition-W} is then just a reformulation in the present setting of theorem 6.26 from \cite{Reiner13}.
\end{proof}

For $k\in\{2,\dots,n\}$, the subset $T_{k}$ of standard Young tableaux involved in the decomposition of $W^{k}$ is thus defined by $T_{k} = \{Q\in\text{SYT}_{n} \;\vert\; \text{eig}(Q) = n-k \}$. Figure \ref{fig:decomposition-W} represents the different subsets $T_{k}$ with the associated decompositions for $n = 4$.

\begin{figure}[h!]
\centering
\begin{tabular}{cc}
\hline
\raisebox{1cm}{$W^{4} \cong S^{(3,1)}\oplus S^{(2,2)}\oplus S^{(2,1,1)}\oplus S^{(1,1,1,1)}$} & \includegraphics[scale=0.4,trim= 0 -1 0 -1]{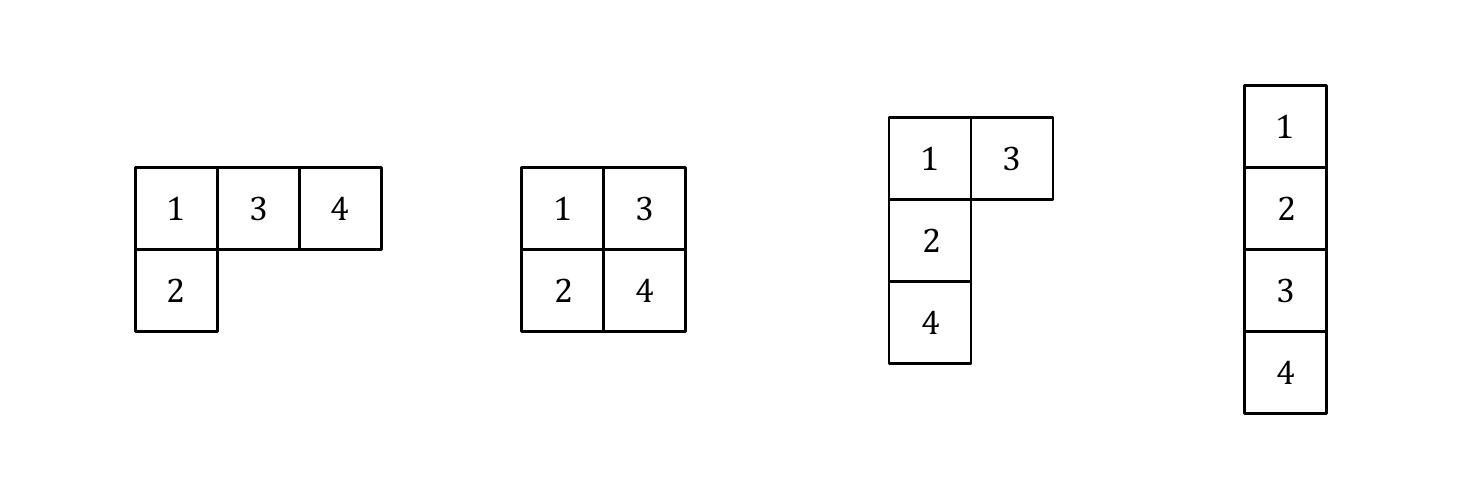}\\
\hline
\raisebox{1cm}{$W^{3}\cong S^{(3,1)}\oplus S^{(2,2)}\oplus S^{(2,1,1)}$} & \includegraphics[scale=0.4,trim= 0 -1 0 -1]{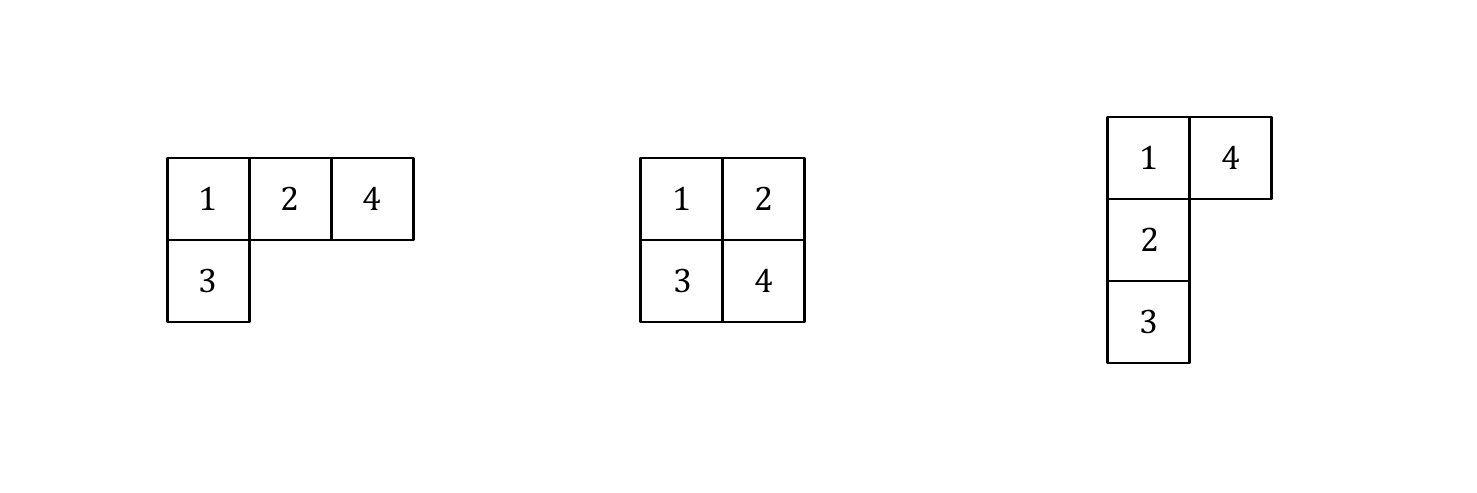}\\
\hline
\raisebox{1cm}{$W^{2}\cong S^{(3,1)}\oplus S^{(2,1,1)}$} & \includegraphics[scale=0.4,trim= 0 -1 0 -1]{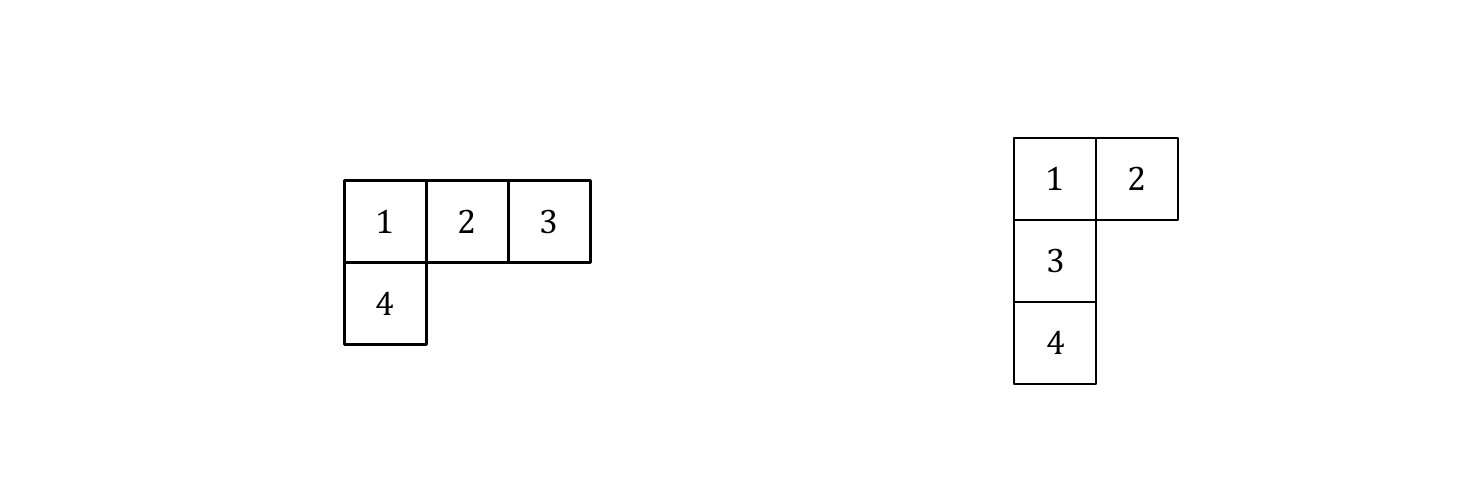}\\
\hline
\raisebox{1cm}{$V^{0}\cong S^{(4)}$} &\includegraphics[scale=0.4,trim= 0 -1 0 -1]{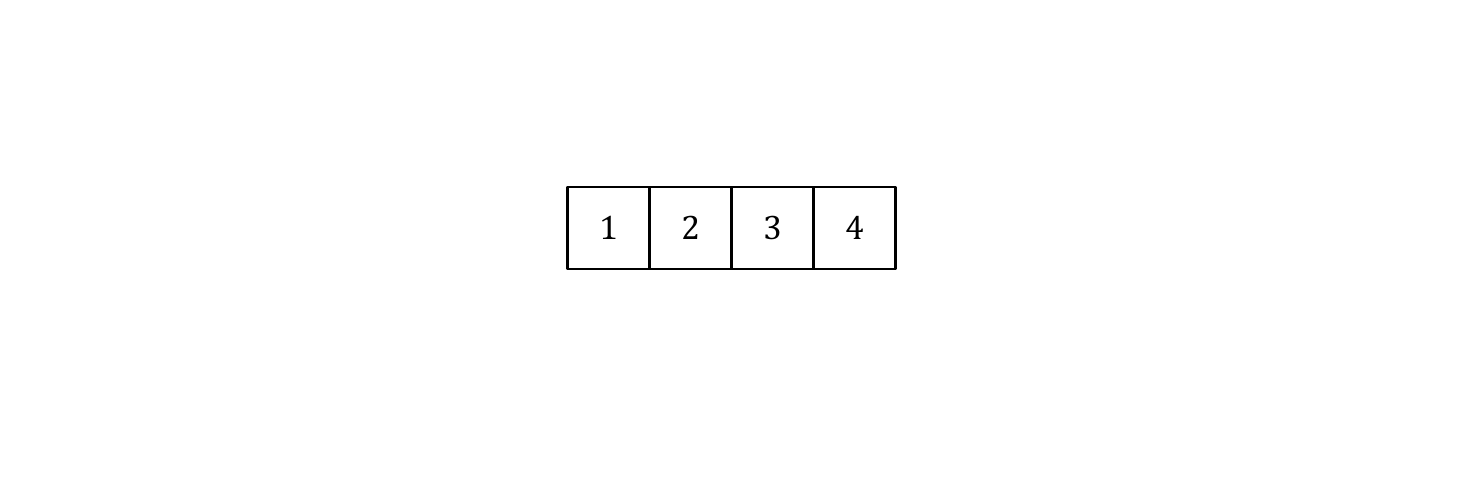}\\
\hline
\end{tabular}
\caption{Spaces $W^{k}$ and their decompositions into irreducibles, for $n = 4$}
\label{fig:decomposition-W}
\end{figure}

\begin{remark}
For $\lambda = (\lambda_{1}, \dots, \lambda_{r})\vdash n$, the usual ranking interpretation of the Specht module $S^{\lambda}$ is that it localizes information at ``scale'' $n-\lambda_{1}$, in the sense that it localizes the absolute rank information of $n-\lambda_{1}$ items. The Specht module $S^{(n-1,1)}$ localizes absolute rank information about $1$ item, $S^{(n-2,2)}$ and $S^{(n-2,1,1)}$ both localize absolute rank information about $2$ items, and so on. It is interesting to notice that this interpretation does not hold when dealing with relative rank information. The space $W^{k}$ can indeed be seen as localizing the relative rank information at scale $k$ \textit{i.e.} the relative rank information related to incomplete rankings involving $k$ items. However, theorem \ref{decomposition-W} shows that the decomposition of $W^{k}$ can involve Specht modules $S^{\lambda}$ with $n-\lambda_{1} \neq k$. Figure \ref{fig:decomposition-W} shows for example that for $n = 4$, $W^{3}$ involves absolute rank information of scale $1$ and $2$.
\end{remark}

\section{The wavelet basis}
\label{sec:wavelet}

We now construct an explicit basis $\Psi$ adapted to the multiresolution decomposition of $L(\Sn)$, in the sense that $\Psi = \{\psi_{0}\}\cup\bigcup_{A\in\mathcal{P}(\n)}\Psi_{A}$ where $\Psi_{A}$ is a basis of $W_{A}$ for all $A\in\mathcal{P}(\n)$, and establish its main properties.

\subsection{Generative algorithm}
\label{subsec:generative-algorithm}

The basis is defined by an algorithm adapted from \cite{RT11}, which requires some definitions about cycles and permutations. A cycle on $\n$ is a permutation $\gamma\in\Sn$ for which there exist $m$ distinct elements $a_{1}, \dots, a_{m}\in\n$, with $m\geq 2$, such that $\gamma(a_{i}) = a_{i+1}$ for $i = 1, \dots, m-1$, $\gamma(a_{m}) = a_{1}$, and $\gamma(a^{\prime}) = a^{\prime}$ for all $a^{\prime}\in \n\setminus\lbrace a_{1}, \dots, a_{m}\rbrace$. The cycle $\gamma$ is then denoted by $(a_{1}\dots a_{m})$, its support is the set $\lbrace a_{1}, \dots, a_{m}\rbrace$ and its length is $l(\gamma) = m$. For $A\in\mathcal{P}(\n)$, we denote by $\Cycle(A)$ the set of all cycles with support $A$. It is well known that a permutation $\tau\in\Sn$ admits a unique decomposition as a product of cycles with distinct supports $\tau = \gamma_{1}\dots\gamma_{r}$ (fixed-points are not represented). This decomposition can though be written in several ways, depending on the order of the cycles and the first element of each cycle.

\begin{definition}[Standard cycle form]
A permutation is written in standard cycle form if it is written as a product of disjoint cycles so that the minimum element of a cycle appears at the leftmost letter in that cycle, and the cycles are arranged from left to right in increasing values of minimum letters.
\end{definition}

\begin{example}
The permutation $(134)(25)$ is written in standard cycle form, while the alternative representations $(413)(25)$ or $(25)(134)$ are not.
\end{example}

For a permutation $\tau\in\Sn$, we denote by $\cyc(\tau)$ the number of its cycles, define its support by $\supp(\tau) = \{ i\in\n \;\vert\; \tau(i) \neq i\}$ and its length by $l(\tau) = \vert\supp(\tau)\vert$. These definitions extend the definitions of the support and the length for a cycle, and if $\gamma_{1}\dots\gamma_{\cyc(\tau)}$ is the cycle decomposition of $\tau$, $l(\tau) = l_{1} + \dots + l_{\cyc(\tau)}$. For $A\in\mathcal{P}(\n)$, we define
\[
\mathcal{D}_{A} = \{\sigma\in\Sn \;\vert\; \supp(\sigma) = A \},
\]
and we set by convention $\mathcal{D}_{\emptyset} = \{id\}$, where $id\in\Sn$ is the identity permutation on $\n$. By definition, a permutation $\sigma\in\mathcal{D}_{A}$ induces a fixed-point free permutation, also called a derangement, on $A$. The set $\mathcal{D}_{A}$ is thus the natural embedding of the set of derangements on $A$ in $\Sn$. The algorithm of \cite{RT11} computes a basis for the top homology space of the complex of injective words over the field $\mathbb{F}_{2} = \mathbb{Z}/2\mathbb{Z}$ of two elements. It uses the operation on $\mathbb{F}_{2}$-valued chains ``$x\diamond y = xy + yx$''. In the present setting, we use the following definition.

\begin{definition}[Diamond operator]
\label{operator diamond}
For $x, y\in L(\Gamma_{n})$, we define
\begin{equation*}
x\diamond y = xy - yx.
\end{equation*}
\end{definition}

The algorithm of \cite{RT11} takes a derangement on $\{1, \dots, k\}$ as input and outputs an element of the top homology space of the complex of injective words. It happens that the same algorithm with the diamond operator of definition \ref{operator diamond} maps a derangement on $\{1, \dots, k\}$ to an element of $H_{k}$. Moreover, the algorithm is naturally extended to take a permutation $\tau\in\mathcal{D}_{A}$ as input and output an element $x_{\tau}$ of the space $H_{A}$, for any $A\in\mathcal{P}(\n)$.

\begin{algorithm}
\label{basis generation}
Let $A\in\mathcal{P}(\n)$. The input is a permutation $\tau\in\mathcal{D}_{A}$ written in standard cycle form, and the output is a chain $x_{\tau}\in H_{A}$.
\begin{center}
\begin{tabular}{p{35pt}p{280pt}}
Step 1. & Between each consecutive pair of letters in each cycle of $\tau$, insert the symbol $\star$.\\
Step 2. & If there are no $\star$ symbols in the string, then HALT. Otherwise, determine which symbol $\star$ has the largest right-hand neighbor.\\
Step 2. & Suppose that the symbol located in Step 2 is between quantities $Q$ and $R$; that is, it appears as $Q\star R$. Then replace $Q\star R$ by $(Q\diamond R)$.\\
Step 4. & GOTO Step 2.
\end{tabular}
\end{center}
\end{algorithm}

\begin{example}
Let $A = \{1,2,3,4,5\}$ and $\tau = (134)(25)$. Algorithm \ref{basis generation} gives the following sequence of steps.
\begin{center}
$(1\star 3\star 4)(2\star 5)$\\
$(1\star 3\star 4)(2\diamond 5)$\\
$(1\star (3\diamond 4))(2\diamond 5)$\\
$(1\diamond (3\diamond 4))(2\diamond 5)$\\
\end{center}
Expanding the concatenation and the $\diamond$ operations, we obtain:
\begin{align*}
x_{(134)(25)} 	&= (1\diamond (3\diamond 4))(2\diamond 5)\\
				&= (1\diamond (34-43))(25-52)\\
				&= (134-143-341+431)(25-52)\\
				&= 13425-13452-14325+14352-34125+34152+43125-43152.
\end{align*}
\end{example}

\subsection{The basis of $L(\Sn)$}
\label{subsec:global-basis}

We now construct the wavelet basis of $L(\Sn)$. We first show that the outputs of algorithm \ref{basis generation} belong to the claimed space.

\begin{proposition}
\label{elements of H}
Let $A\in\mathcal{P}(\n)$. For all $\tau\in\mathcal{D}_{A}$, $x_{\tau}\in H_{A}$.
\end{proposition}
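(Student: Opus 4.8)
The plan is to verify the two defining conditions of $H_A$ directly from the structure of Algorithm \ref{basis generation}: that $x_\tau$ is supported on words of content exactly $A$, so that $x_\tau\in L(\Gamma(A))$, and that $\varrho_a x_\tau = 0$ for every $a\in A$. The first thing I would observe is that, since the $\star$ symbols are inserted only between consecutive letters \emph{within} a single cycle, the algorithm never combines letters from two different cycles by a diamond; the only cross-cycle operation is juxtaposition. Hence, writing $\tau = \gamma_1\cdots\gamma_r$ in standard cycle form, the output factors as a concatenation product $x_\tau = y_{\gamma_1}\cdots y_{\gamma_r}$, where each $y_{\gamma_j}$ is some complete parenthesization of the diamond expression over the letters of $\gamma_j$ taken in cyclic order. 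The particular bracketing selected by the ``largest right-hand neighbor'' rule is immaterial; the argument will work for an arbitrary complete bracketing, so I need not track the algorithm's choices.

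The first key step is to record how $\varrho_a$ distributes over the two operations. For chains $x\in L(\Gamma(S))$ and $y\in L(\Gamma(T))$ with $S\cap T = \emptyset$ and $a\in S$, expanding $y$ over its Dirac words and applying Lemma \ref{commutation} term by term (the letters of $y$ never contain $a$) yields $\varrho_a(xy) = (\varrho_a x)y$ and $\varrho_a(yx) = y(\varrho_a x)$. Subtracting gives the diamond identity $\varrho_a(x\diamond y) = (\varrho_a x)\diamond y$, and symmetrically $\varrho_a(x\diamond y) = x\diamond(\varrho_a y)$ when $a\in T$. Alongside these I would record the two elementary facts $\varrho_a(a) = \overline{0}$ for the one-letter word and $\overline{0}\diamond w = w\diamond\overline{0} = 0$ for any chain $w$ (because $\overline{0}$ is the neutral element of concatenation, so $\overline{0}w - w\overline{0} = w - w = 0$). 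These identities are all that the rest of the proof consumes.

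The core step is an induction on the number $m$ of letters, establishing that any complete diamond bracketing $z$ of distinct letters $b_1,\dots,b_m$ lies in $L(\Gamma(\{b_1,\dots,b_m\}))$ and, moreover, satisfies $\varrho_{b_k}z = 0$ for every $k$ whenever $m\geq 2$. The content statement is immediate, since $x\diamond y\in L(\Gamma(S\sqcup T))$ whenever $x,y$ have disjoint contents. For the deletion statement, I would write $z = z_L\diamond z_R$ at the outermost bracket and push $\varrho_{b_k}$ into whichever factor, say $z_L$, contains $b_k$, using the diamond identity above. If $z_L$ still carries at least two letters, the inductive hypothesis gives $\varrho_{b_k}z_L = 0$ and hence $\varrho_{b_k}z = 0\diamond z_R = 0$; if instead $z_L$ is the single letter $b_k$, then $\varrho_{b_k}z_L = \overline{0}$ and $\overline{0}\diamond z_R = 0$ closes the case. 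Because every cycle of a derangement has length at least $2$, each $y_{\gamma_j}$ involves $m\geq 2$ letters, so $\varrho_a y_{\gamma_j} = 0$ for every $a$ in the support of $\gamma_j$.

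To conclude, the content of $x_\tau = y_{\gamma_1}\cdots y_{\gamma_r}$ is the disjoint union of the cycle supports, namely $\supp(\tau) = A$, so $x_\tau\in L(\Gamma(A))$; and for any $a\in A$, $a$ lies in a unique cycle support, whence concatenation distributivity gives $\varrho_a x_\tau = y_{\gamma_1}\cdots(\varrho_a y_{\gamma_j})\cdots y_{\gamma_r} = 0$. Thus $x_\tau\in H_A$. I expect the only genuinely delicate point to be the careful bookkeeping of the content-disjointness hypotheses throughout the induction, so that Lemma \ref{commutation} legitimately applies at every node of the bracketing tree; once the distributivity identities are in place, the remainder is a routine traversal of that tree.
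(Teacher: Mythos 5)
Your proof is correct and follows essentially the same route as the paper's: it factors $x_\tau$ over the cycles by distributing $\varrho_a$ across concatenation, then kills $\varrho_a$ inside the cycle containing $a$ using the identity $\varrho_a(a\diamond u)=0$ together with the distributivity of $\varrho_a$ over $\diamond$ (the paper's Lemma \ref{deletion}, which you rederive from Lemma \ref{commutation}); your induction over the bracketing tree is just a top-down phrasing of the paper's bottom-up propagation. The only additions are cosmetic but welcome: you explicitly verify the content condition $x_\tau\in L(\Gamma(A))$, which the paper leaves implicit, and your observation that the bracketing is immaterial matches the paper's own remark that the standard cycle form is needed only for the basis theorem, not for membership in $H_A$.
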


As in \cite{RT11}, the proof relies on the simple following lemma, of which proof is straightforward and is thus omitted.

\begin{lemma}
\label{deletion}
Let $x,y\in L(\Gamma_{n})$ with $c(x)\cap c(y) = \emptyset$, and $a\in c(x)$. Then
\[
\varrho_{a}(xy) = \varrho_{a}(x)y \qquad\text{and}\qquad \varrho_{a}(x\diamond y) = \varrho_{a}(x)\diamond y.
\]
\end{lemma}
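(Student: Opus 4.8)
The plan is to reduce both identities to the level of single injective words and then invoke the trivial observation that deleting a letter from a concatenation of content-disjoint words only affects the factor in which that letter occurs. Since the concatenation product and the diamond operator are bilinear and $\varrho_{a}$ is linear, it suffices to prove the identities when $x = \pi$ and $y = \pi'$ are Dirac functions of injective words with $c(\pi)\cap c(\pi')=\emptyset$ and $a\in c(\pi)$; the general statement then follows by expanding $x$ and $y$ over their supports. The content-homogeneity of the chains involved (they arise from Algorithm~\ref{basis generation}, hence $x\in L(\Gamma(A))$ and $y\in L(\Gamma(B))$ for disjoint $A,B$) guarantees that $a\in c(x)$ forces $a\in c(\pi)$ for every $\pi$ in the support of $x$, and that $a\notin c(\pi')$ for every $\pi'$ in the support of $y$.

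With this reduction, the first step is to record the word-level fact: if $c(\pi)\cap c(\pi')=\emptyset$ and $a\notin c(\pi')$, then the letter $a$ occurs in the concatenation $\pi\pi'$ only inside the $\pi$-block, so $\varrho_{a}(\pi\pi')=(\varrho_{a}\pi)\pi'$, and by the mirror argument applied to $\pi'\pi$, also $\varrho_{a}(\pi'\pi)=\pi'(\varrho_{a}\pi)$. Here one must check that the products on the right remain genuine concatenations rather than the zero arising from the ``otherwise'' clause of the product: because $c(\varrho_{a}\pi)=c(\pi)\setminus\{a\}\subset c(\pi)$ stays disjoint from $c(\pi')$, content-disjointness survives the deletion and no spurious zeros appear. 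Summing over the supports, the first identity $\varrho_{a}(xy)=\varrho_{a}(x)y$ follows at once. For the diamond identity I would write $x\diamond y=xy-yx$ and compute $\varrho_{a}(x\diamond y)=\varrho_{a}(xy)-\varrho_{a}(yx)=\varrho_{a}(x)\,y-y\,\varrho_{a}(x)=\varrho_{a}(x)\diamond y$, where the first term uses $\varrho_{a}(\pi\pi')=(\varrho_{a}\pi)\pi'$ and the second uses the mirror identity $\varrho_{a}(\pi'\pi)=\pi'(\varrho_{a}\pi)$, both legitimate since $a\notin c(y)$.

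As the paper already signals, there is no genuine obstacle in this lemma: the entire content is the single observation that deletion acts on exactly one side of a content-disjoint concatenation. The only points demanding care are bookkeeping ones — fixing the meaning of $c(x)$ and $c(y)$ for chains (the common content of the words in each support), extracting $a\notin c(y)$ from the hypotheses $a\in c(x)$ and $c(x)\cap c(y)=\emptyset$, and verifying that disjointness persists after deleting $a$ so that every concatenation that appears is a true concatenation. These routine checks are precisely why the authors deem the proof straightforward and omit it.
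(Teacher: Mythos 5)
Your proof is correct and is exactly the routine argument the paper has in mind: the authors omit the proof of this lemma as straightforward, and your reduction by bilinearity to Dirac words, followed by the observation that deleting $a$ only affects the block containing it (together with the checks that $a\notin c(y)$ follows from the hypotheses and that content-disjointness survives the deletion), is precisely that argument. The mirror identity $\varrho_{a}(yx)=y\,\varrho_{a}(x)$, which you need for the diamond case, is also correctly identified and justified.
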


\begin{proof}[Proof of proposition \ref{elements of H}]
Let $A\in\mathcal{P}(\n)$ and $\tau\in\mathcal{D}_{A}$. We need to show that for all $a\in A$, $\varrho_{a}x_{\tau} = 0$. Let $a\in A$ and $\tau = \gamma_{1}\dots\gamma_{r}$ be the standard cycle form of $\tau$. By definition of $\mathcal{D}_{A}$, $\{\supp(\gamma_{1}), \dots, \supp(\gamma_{r}) \}$ is a partition of $A$. Let $\gamma_{i}$ be the cycle which support contains $a$. By definition of the algorithm, $x_{\tau} = x_{\gamma_{1}}\dots x_{\gamma_{r}}$, and using lemma $\ref{deletion}$, we have $\varrho_{a}(x_{\tau}) = x_{\gamma_{1}}\dots\varrho_{a}(x_{\gamma_{i}})\dots x_{\gamma_{r}}$. Since $\gamma_{i}$ is a cycle, its support contains at least two elements, and thus $x_{\gamma_{i}}$ contains a product $a\diamond u$ or $u \diamond a$. Now, $\varrho_{a}(a\diamond u) = \varrho_{a}(au - ua) = u - u = 0$. Using lemma \ref{deletion}, this implies that $\varrho_{a}(x_{\gamma_{i}}) = 0$ and then that $\varrho_{a}x_{\tau} = 0$, which concludes the proof.
\end{proof}

\begin{example}
Using the precedent example, we can see that
\[
\varrho_{4}x_{(134)(25)} = 1325-1352-1325+1352-3125+3152+3125-3152 = 0.
\]
\end{example}

\begin{remark}
The proof of proposition \ref{elements of H} does not use the fact that the cycle decomposition is in standard form. This condition is indeed only necessary to prove that the outputs of the algorithm for all $\tau\in\mathcal{D}_{A}$ constitute a basis of $H_{A}$.
\end{remark}

We now get to the central result in the construction of our wavelet basis: $(x_{\tau})_{\tau\in\mathcal{D}_{A}}$ is a basis of $H_{A}$ for all $A\in\mathcal{P}(\n)$. In \cite{RT11}, they prove that their algorithm generates a basis for the top homology space of the complex of injective words. This result cannot be directly transposed in our context because $H_{A}$ is not the top homology space of the complex of injective words on $A$. It happens however that the proof is exactly the same as the proof of theorem 5.2 in \cite{RT11} and relies on concepts introduced specifically for that purpose (namely ``graph derangements'' and the ``collapsing map''). It is thus left to the reader.

\begin{theorem}
\label{basis H}
For all $A\in\mathcal{P}(\n)$, $(x_{\tau})_{\mathcal{D}_{A}}$ is a basis of $H_{A}$.
\end{theorem}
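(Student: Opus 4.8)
The plan is to derive the theorem from a pure linear-independence statement, to relabel $A$ to the canonical set $\nk$, and then to run a leading-term (triangularity) argument in the spirit of \cite{RT11}. First I would dispose of the dimension count. A permutation with support exactly $A$ is the embedding of a derangement of $A$, so $\vert\mathcal{D}_{A}\vert = d_{\vert A\vert}$; by Theorem \ref{topology} (together with $H_{A}\cong H_{\vert A\vert}$) this equals $\dim H_{A}$, and by Proposition \ref{elements of H} every $x_{\tau}$ lies in $H_{A}$. Thus $(x_{\tau})_{\tau\in\mathcal{D}_{A}}$ has exactly $\dim H_{A}$ elements, and it suffices to prove that these chains are linearly independent.

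Next I would reduce to $A=\nk$ with $k=\vert A\vert$. The unique increasing bijection $A\to\nk$ induces an isomorphism $L(\Gamma(A))\to L(\Gamma(\nk))$ carrying $H_{A}$ onto $H_{k}$ and $\mathcal{D}_{A}$ bijectively onto the derangements of $\nk$. Since concatenation and $\diamond$ act letterwise, and Algorithm \ref{basis generation} consults the order on $\n$ only through the comparisons defining the ``largest right-hand neighbor'', all of which an order isomorphism preserves, this bijection sends each $x_{\tau}$ to the chain the algorithm produces on the relabelled derangement. We are then exactly in the setting of \cite{RT11}.

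For independence I would build a leading-term (triangularity) argument. Reading $\tau=\gamma_{1}\cdots\gamma_{r}$ in standard cycle form as a word $w_{\tau}$ by erasing parentheses, the ``no swap'' branch of the expansion of $x_{\tau}$ through $Q\diamond R=QR-RQ$ produces exactly $w_{\tau}$, with coefficient $+1$. One verifies---this is where the largest-right-hand-neighbor rule of Algorithm \ref{basis generation}, which fixes the order in which the $\diamond$'s are resolved, is essential---that $w_{\tau}$ is in fact the lexicographically least monomial of $x_{\tau}$. If the labelling $\tau\mapsto w_{\tau}$ were injective, independence would be immediate: given $\sum_{\tau}c_{\tau}x_{\tau}=0$, pick $\tau_{0}$ with $w_{\tau_{0}}$ lexicographically least among the $\tau$ with $c_{\tau}\neq 0$; then $w_{\tau_{0}}$ occurs in no other $x_{\tau}$ of the relation (each such $x_{\tau}$ consists of words $\ge w_{\tau}> w_{\tau_{0}}$), whence $c_{\tau_{0}}=0$.

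The genuine difficulty, and the reason the machinery of \cite{RT11} is needed, is that $\tau\mapsto w_{\tau}$ is \emph{not} injective on derangements in min-first standard cycle form: for instance $(15)(23)$ and $(1523)$ both give $w_{\tau}=1523$, and $x_{(15)(23)}$ and $x_{(1523)}$ share $1523$ as their lexicographically least term. Hence the naive leading term fails to separate the $x_{\tau}$, and one must refine it by a secondary combinatorial statistic so as to obtain a unitriangular transition matrix from $(x_{\tau})_{\tau\in\mathcal{D}_{A}}$ to the monomial basis of $L(\Gamma(A))$. Producing that refinement is exactly the role of the ``graph derangements'' and the ``collapsing map'' of \cite{RT11}, and it is the crux of the proof. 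Because replacing their $\mathbb{F}_{2}$-operation $xy+yx$ by the signed $\diamond=xy-yx$ changes only signs and never which monomials occur, the combinatorial heart of their proof of Theorem 5.2 carries over verbatim and delivers the linear independence, which by the count above completes the proof.
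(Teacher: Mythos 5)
Your proposal is correct and takes essentially the same route as the paper: the paper's own proof likewise consists in reducing the statement to the argument of Theorem 5.2 of \cite{RT11} (the ``graph derangements'' and the ``collapsing map''), asserting that replacing the $\mathbb{F}_{2}$-operation $xy+yx$ by the signed $\diamond$ does not affect the combinatorics, and leaving the transposition to the reader. Your additional scaffolding --- the reduction to linear independence via Theorem \ref{topology} and Proposition \ref{elements of H}, the order-preserving relabelling to $\nk$, and the example $(15)(23)$ versus $(1523)$ showing why a naive lexicographic leading-term argument cannot suffice --- is sound and simply makes explicit what the paper leaves implicit.
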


We are now able to construct the wavelet basis of $L(\Sn)$, using the embedding operator $\phi_{n}$. Notice that for all $\tau\in\Sn\setminus\{id\}$, $\supp(\tau)\in\mathcal{P}(\n)$. We thus set $\psi_{id} = \psi_{0} = \1{\Sn}$ and for $\tau\in\Sn\setminus\{id\}$, we define
\begin{equation}
\label{definition psi}
\psi_{\tau} = \phi_{n}(x_{\tau}) = \sum_{\pi\in\supp(\tau)}x_{\tau}(\pi)\1{\Sn[\pi]}.
\end{equation}
By theorem \ref{item-scale decomposition}, $\phi_{n}$ is an isomorphism between $H_{A}$ and $W_{A}$ for all $A\in\mathcal{P}(\n)$. Combined with theorem \ref{basis H} we immediately obtain the following theorem.

\begin{theorem}
\label{wavelet basis general}
For all $A\in\mathcal{P}(\n)$, $(\psi_{\tau})_{\mathcal{D}_{A}}$ is a basis of $W_{A}$, and
\[
(\psi_{\tau})_{\tau\in\Sn} \text{ is a basis of } L(\Sn).
\]
\end{theorem}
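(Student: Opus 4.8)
The plan is to transport the two ingredients already established through the isomorphism $\phi_{n}$ and then assemble them via the direct sum decomposition of Theorem \ref{item-scale decomposition}. First I would fix a single subset $A\in\mathcal{P}(\n)$. By Theorem \ref{basis H} the family $(x_{\tau})_{\tau\in\mathcal{D}_{A}}$ is a basis of $H_{A}$, and by Theorem \ref{item-scale decomposition} the restriction of $\phi_{n}$ to $H_{A}$ is a linear isomorphism onto $W_{A}$. Since a linear isomorphism sends a basis to a basis, the family $(\psi_{\tau})_{\tau\in\mathcal{D}_{A}} = (\phi_{n}(x_{\tau}))_{\tau\in\mathcal{D}_{A}}$ is a basis of $W_{A}$, which settles the first assertion.

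For the global statement the essential point is to match the index set $\Sn$ with the summands of the decomposition. Sending each permutation to its support partitions $\Sn$ as
\[
\Sn = \{id\}\ \sqcup\ \bigsqcup_{A\in\mathcal{P}(\n)}\mathcal{D}_{A},
\]
since the identity has empty support whereas every other permutation has a support of cardinality at least $2$, hence belonging to $\mathcal{P}(\n)$. On the other hand Theorem \ref{item-scale decomposition} yields $L(\Sn) = V^{0}\oplus\bigoplus_{A\in\mathcal{P}(\n)}W_{A}$, with $V^{0} = \mathbb{R}\psi_{0}$ spanned by $\psi_{0} = \psi_{id}$. Concatenating a basis of each summand of a direct sum produces a basis of the whole space, so I would glue the singleton $\{\psi_{0}\}$, a basis of $V^{0}$, to the bases $(\psi_{\tau})_{\tau\in\mathcal{D}_{A}}$ of the spaces $W_{A}$ obtained in the first step. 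The union of the index sets is exactly $\Sn$ by the displayed partition, which proves that $(\psi_{\tau})_{\tau\in\Sn}$ is a basis of $L(\Sn)$.

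There is no genuine obstacle left, since the analytic weight is carried entirely by Theorem \ref{basis H} (resting on the transposition of the algorithm of \cite{RT11}) and Theorem \ref{item-scale decomposition} (resting on the homological computation of $\dim H_{k}$). The only point requiring care is the elementary verification of the displayed partition of $\Sn$, which guarantees that every $\psi_{\tau}$ occurs exactly once and none is omitted; this also makes the count consistent, each $W_{A}$ contributing $\vert\mathcal{D}_{A}\vert = \dim W_{A}$ vectors so that the total matches $\dim L(\Sn) = n!$.
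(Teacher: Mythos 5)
Your proposal is correct and takes essentially the same route as the paper: the paper likewise obtains each basis $(\psi_{\tau})_{\tau\in\mathcal{D}_{A}}$ of $W_{A}$ by transporting $(x_{\tau})_{\tau\in\mathcal{D}_{A}}$ through the isomorphism $\phi_{n}$ between $H_{A}$ and $W_{A}$ (theorems \ref{basis H} and \ref{item-scale decomposition}), and then assembles the global basis from the multiresolution decomposition $L(\Sn) = V^{0}\oplus\bigoplus_{A\in\mathcal{P}(\n)}W_{A}$. Your explicit verification of the partition $\Sn = \{id\}\sqcup\bigsqcup_{A\in\mathcal{P}(\n)}\mathcal{D}_{A}$ and the accompanying dimension count merely spell out what the paper leaves implicit.
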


\begin{example}
\label{ex:full-basis}
For $n = 4$, figure \ref{fig:full-wavelet-basis} gives the full wavelet basis of $L(\Sym{4})$ ($[\pi]$ is a shortcut for $\1{\Sym{4}[\pi]}$).

\begin{figure}[h!]
\centering
\begin{tabular*}{\textwidth}{@{\extracolsep{\fill}}cccccccccccc}
\multicolumn{12}{c}{$\psi_{(1234)}$}\\
\multicolumn{12}{c}{\tiny{$1234 - 1243 - 1342 + 1432 - 2341 + 2431 + 3421 - 4321$}}\\
\multicolumn{12}{c}{$\psi_{(1243)}$} \\
\multicolumn{12}{c}{\tiny{$1243 - 1324 + 1342 + 1423 - 2431 + 3241 - 3421 + 4231$}}\\
\multicolumn{12}{c}{$\psi_{(1324)}$} \\
\multicolumn{12}{c}{\tiny{$1324 - 1342 - 2413 + 2431 - 3124 + 3142 + 4213 - 4231$}}\\
\multicolumn{12}{c}{$\psi_{(1342)}$} \\
\multicolumn{12}{c}{\tiny{$1342 - 1432 - 2134 + 2143 + 2341 - 2431 + 3412 - 4312$}}\\
\multicolumn{12}{c}{$\psi_{(1423)}$} \\
\multicolumn{12}{c}{\tiny{$1423 - 1432 - 2314 + 2341 + 3214 - 3241 - 4123 + 4132$}}\\
\multicolumn{12}{c}{$\psi_{(1432)}$} \\
\multicolumn{12}{c}{\tiny{$1432 - 2143 + 2314 - 2341 + 2413 - 3142 + 3412 - 4132$}}\\
\multicolumn{12}{c}{$\psi_{(12)(34)}$} \\
\multicolumn{12}{c}{\tiny{$1234 - 1243 - 2134 + 2143$}}\\
\multicolumn{12}{c}{$\psi_{(13)(24)}$} \\
\multicolumn{12}{c}{\tiny{$1324 - 1342 - 3124 + 3142$}}\\
\multicolumn{12}{c}{$\psi_{(14)(23)}$} \\
\multicolumn{12}{c}{\tiny{$1423 - 1432 - 4123 + 4132$}}\\
\vspace{10pt}\\
\multicolumn{3}{c}{$\psi_{(123)}$} & \multicolumn{3}{c}{$\psi_{(124)}$} & \multicolumn{3}{c}{$\psi_{(134)}$} & \multicolumn{3}{c}{$\psi_{(234)}$}\\
\multicolumn{3}{c}{\tiny{$[123]  - [132] - [231] + [321]$}} & \multicolumn{3}{c}{\tiny{$[124]  - [142] - [241] + [421]$}} & \multicolumn{3}{c}{\tiny{$[134]  - [143] - [341] + [431]$}} & \multicolumn{3}{c}{\tiny{$[234]  - [243] - [342] + [432]$}}\\
\multicolumn{3}{c}{$\psi_{(132)}$} & \multicolumn{3}{c}{$\psi_{(142)}$} & \multicolumn{3}{c}{$\psi_{(143)}$} & \multicolumn{3}{c}{$\psi_{(243)}$}\\
\multicolumn{3}{c}{\tiny{$[132]  - [213] + [231] - [312]$}} & \multicolumn{3}{c}{\tiny{$[142]  - [214] + [241] - [412]$}} & \multicolumn{3}{c}{\tiny{$[143]  - [314] + [341] - [413]$}} & \multicolumn{3}{c}{\tiny{$[243]  - [324] + [342] - [423]$}}\\
\vspace{10pt}\\
\multicolumn{2}{c}{$\psi_{(12)}$} & \multicolumn{2}{r}{$\psi_{(13)}$} & \multicolumn{2}{c}{$\psi_{(14)}$} & \multicolumn{2}{c}{$\psi_{(23)}$} & \multicolumn{2}{r}{$\psi_{(24)}$} & \multicolumn{2}{c}{$\psi_{(34)}$} \\
\multicolumn{2}{c}{\tiny{$[12] - [21]$}} & \multicolumn{2}{r}{\tiny{$[13] - [31]$}} & \multicolumn{2}{c}{\tiny{$[14] - [41]$}} & \multicolumn{2}{c}{\tiny{$[23] - [32]$}} & \multicolumn{2}{r}{\tiny{$[24] - [42]$}} & \multicolumn{2}{c}{\tiny{$[34] - [43]$}}\\
\vspace{10pt}\\
\multicolumn{12}{c}{$\psi_{id}$}
\end{tabular*}
\caption{Wavelet basis of $L(\Sym{4})$}
\label{fig:full-wavelet-basis}
\end{figure}

\end{example}

\begin{remark}
The wavelet basis and the multiresolution decomposition are not orthogonal, example \ref{ex:full-basis} provides many couples $\tau,\tau^{\prime}\in \Sym{4}$ such that $\left\langle \psi_{\tau}, \psi_{\tau^{\prime}}\right\rangle \neq 0$.
\end{remark}

\subsection{General properties of the wavelet basis}
\label{subsec:general-properties}

For a chain $x\in L(\Gamma_{n})$ (in particular a function in $L(\Sn)$), we define its support by $\supp(x) = \{ \pi\in\Gamma_{n} \;\vert\; x(\pi) \neq 0 \}$. See the Appendix section for the proof of the following proposition.

\begin{proposition}
\label{general wavelets}
Let $\tau\in\Sn\setminus\{id\}$, $k = \vert\tau\vert$ and $r = \cyc(\tau)$.
\begin{enumerate}
\item $\psi_{\tau}(\sigma)\in\{-1, 0, 1\}$ for all $\sigma\in\Sn$.
\item $\vert\supp(\psi_{\tau})\vert = 2^{k-r}(n-k+1)!$.
\end{enumerate}
\end{proposition}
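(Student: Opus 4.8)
The plan is to first determine the precise combinatorial shape of the chain $x_\tau\in H_A$ (where $A=\supp(\tau)$, so that $\vert A\vert=k$) produced by Algorithm \ref{basis generation}, and then transport that information through the embedding $\psi_\tau=\phi_n(x_\tau)$ using the rigid ``contiguous subword'' structure of $\phi_n$. Writing the standard cycle form $\tau=\gamma_1\cdots\gamma_r$ with $\gamma_i$ of length $m_i$, one has $m_1+\cdots+m_r=k$ and $x_\tau=x_{\gamma_1}\cdots x_{\gamma_r}$ by construction. The goal of the first two steps is to show that $x_\tau$ is $\pm1$-valued on a support of exactly $2^{k-r}$ words, all of content $A$.

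First I would analyze the diamond expansion of a single cycle. The core lemma, proved by induction on $m$, is that any fully bracketed diamond product $b_1\diamond b_2\diamond\cdots\diamond b_m$ of $m$ distinct letters expands into a combination of exactly $2^{m-1}$ pairwise distinct words, each of content $\{b_1,\dots,b_m\}$ and coefficient in $\{-1,1\}$. The inductive step rests on Definition \ref{operator diamond}: for chains $u,v$ with disjoint nonempty contents $C_u,C_v$, every word of $uv$ begins with a block of content $C_u$ whereas every word of $vu$ begins with a block of content $C_v$, so the two families are disjoint; and within each family a word determines its factors by splitting at position $\vert C_u\vert$. Hence $u\diamond v=uv-vu$ involves no cancellation and has support of size $2\,\vert\supp(u)\vert\,\vert\supp(v)\vert$. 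Applied to each cycle this gives that $x_{\gamma_i}$ is $\pm1$-valued on exactly $2^{m_i-1}$ words of content $\supp(\gamma_i)$. For the concatenation $x_\tau=x_{\gamma_1}\cdots x_{\gamma_r}$, the supports $\supp(\gamma_i)$ being pairwise disjoint, the same block-recovery argument shows the product map $(w_1,\dots,w_r)\mapsto w_1\cdots w_r$ is injective, so $x_\tau$ is $\pm1$-valued on exactly $\prod_i 2^{m_i-1}=2^{k-r}$ distinct words of content $A$.

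I would then transfer these facts to $\psi_\tau$ via the uniqueness of content-$A$ blocks. Because a permutation has pairwise distinct letters, any $\sigma\in\Sn$ admits \emph{at most one} contiguous subword of content $A$: if two blocks of length $k$ with content $A$ started at positions $i<j$, then the letter at position $i$ (which lies outside the second block) would belong to $A$ and hence reappear inside the second block, contradicting injectivity. Consequently, in $\psi_\tau(\sigma)=\sum_{\pi\in\supp(x_\tau)}x_\tau(\pi)\1{\Sn[\pi]}(\sigma)$ at most one indicator is nonzero, namely the one indexed by the unique content-$A$ block $\pi_\sigma$ of $\sigma$ when it exists; thus $\psi_\tau(\sigma)=x_\tau(\pi_\sigma)\in\{-1,0,1\}$, which is claim~1. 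The same uniqueness shows the sets $\Sn[\pi]$ for $\pi\in\supp(x_\tau)$ are pairwise disjoint and that $\sigma\in\supp(\psi_\tau)$ exactly when $\pi_\sigma$ exists and lies in $\supp(x_\tau)$, so $\supp(\psi_\tau)=\bigsqcup_{\pi\in\supp(x_\tau)}\Sn[\pi]$. Using $\vert\Sn[\pi]\vert=(n-\vert\pi\vert+1)!=(n-k+1)!$ together with $\vert\supp(x_\tau)\vert=2^{k-r}$ then yields $\vert\supp(\psi_\tau)\vert=2^{k-r}(n-k+1)!$, which is claim~2.

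The main obstacle I anticipate is the no-cancellation assertion in the diamond expansion: one must guarantee that the positive and negative words generated never coincide, both within a single cycle and after concatenating the cycles. This is precisely what the bookkeeping on the content of the leading block is designed to ensure, and once it is in place every remaining step is a clean counting argument.
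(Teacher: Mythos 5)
Your proof is correct and follows essentially the same route as the paper's: first show, by analyzing the diamond expansion produced by algorithm \ref{basis generation}, that $x_{\tau}$ takes values in $\{-1,1\}$ on exactly $2^{k-r}$ distinct words of content $\supp(\tau)$, then push this through $\phi_{n}$ using the pairwise disjointness of the sets $\Sn[\pi]$ and the count $\vert\Sn[\pi]\vert = (n-k+1)!$. If anything, your version is slightly more careful than the paper's: your no-cancellation lemma handles a diamond of two arbitrary homogeneous-content chains (the paper only records the recursion for $x\diamond b$ with $b$ a single letter, although the algorithm's bracketing can pair two composite quantities), and you spell out the uniqueness of a content-$A$ contiguous block inside a permutation, which the paper leaves implicit in ``applying $\phi_{n}$ concludes the proof.''
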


This first proposition provides some general intuition about the wavelet basis. In particular, property \textit{1.} is interesting because it means that all the properties of a wavelet function simply depend on the sign of its values and on the combinatorial structure of its support. The following proposition shows the interaction between wavelets and translations. It appears clearly in the representation of the full wavelet basis of $L(\Sym{4})$ in example \ref{ex:full-basis} that at scale $k$, wavelet functions in $W_{A}$ with $A\in\binom{\n}{k}$ are the translated of wavelet functions in $W_{\{1, \dots, k\}}$. And indeed, as $(\psi_{\tau})_{\tau\in\mathcal{D}_{\{1, \dots, k\}}}$ is a basis of $W_{\{1, \dots, k\}}$ (by theorem \ref{wavelet basis general}), $(T_{\sigma_{0}}\psi_{\tau})_{\tau\in\mathcal{D}_{\{1, \dots, k\}}}$ is a basis of $W_{A}$ for any $\sigma_{0}\in\Sn$ such that $\sigma_{0}(\{1, \dots, k\}) = A$, by proposition \ref{displacement operator}. The following proposition refines this result.

\begin{proposition}
\label{translation wavelets}
Let $\tau\in \Sn$ and $\sigma_{0}\in\Sn$ a permutation that preserves the order of the elements of $\supp(\tau)$, \textit{i.e.} if $\supp(\tau) = \{a_{1}, \dots, a_{k}\}$ with $a_{1} < \dots < a_{k}$, then $\sigma_{0}(a_{1}) < \dots < \sigma_{0}(a_{k})$. Then we have
\[
T_{\sigma_{0}}\psi_{\tau} = \psi_{\sigma_{0}\tau\sigma_{0}^{-1}}.
\]
\end{proposition}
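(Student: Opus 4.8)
The plan is to reduce the statement about the wavelet functions $\psi_\tau = \phi_n(x_\tau)$ to a corresponding statement about the chains $x_\tau \in H_A$, using the commutation property of translations with the embedding operator. Since $\psi_{\sigma_0\tau\sigma_0^{-1}} = \phi_n(x_{\sigma_0\tau\sigma_0^{-1}})$ by definition and property \textit{3.} of lemma \ref{properties translations} gives $T_{\sigma_0}\psi_\tau = T_{\sigma_0}\phi_n(x_\tau) = \phi_n(T_{\sigma_0}x_\tau)$, it suffices to establish the identity at the level of the chains $x_\tau$, namely
\[
T_{\sigma_0}x_\tau = x_{\sigma_0\tau\sigma_0^{-1}},
\]
for every $\sigma_0\in\Sn$ that preserves the order of the elements of $\supp(\tau)$. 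Once this is shown, applying $\phi_n$ to both sides yields the proposition.

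First I would examine how conjugation by $\sigma_0$ acts on the cycle structure of $\tau$. If $\tau = \gamma_1\dots\gamma_r$ is the standard cycle form, then $\sigma_0\tau\sigma_0^{-1}$ has cycles $\sigma_0\gamma_i\sigma_0^{-1}$, where each cycle $(a_1\dots a_m)$ becomes $(\sigma_0(a_1)\dots\sigma_0(a_m))$. The crucial point is that the order-preserving hypothesis on $\sigma_0$ guarantees that the \emph{standard} cycle form is respected by conjugation: since $\sigma_0$ preserves the relative order of the elements of $\supp(\tau)$, the minimum element of each cycle is sent to the minimum of the image cycle, and the increasing arrangement of cycles by minimum letters is preserved. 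Hence algorithm \ref{basis generation} receives $\sigma_0\tau\sigma_0^{-1}$ written in standard cycle form, and the sequence of $\star$-insertions and diamond-reductions is structurally identical to the one for $\tau$, with every letter $a$ replaced by $\sigma_0(a)$. This is the heart of the argument.

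Then I would verify that $T_{\sigma_0}$ intertwines the algorithm's operations with this relabeling. Property \textit{2.} of lemma \ref{properties translations} shows $T_{\sigma_0}$ commutes with the insertion operators $\mathfrak{i}_\omega,\mathfrak{j}_\omega$ appropriately; more directly, $T_{\sigma_0}$ is multiplicative for concatenation and intertwines the diamond operator, i.e. $T_{\sigma_0}(x\diamond y) = (T_{\sigma_0}x)\diamond(T_{\sigma_0}y)$ and $T_{\sigma_0}(xy) = (T_{\sigma_0}x)(T_{\sigma_0}y)$, since $T_{\sigma_0}$ simply relabels every letter by $\sigma_0$ and the concatenation product of words is preserved under relabeling. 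An induction on the number of diamond-reductions, comparing the two algorithm runs step by step, then gives $T_{\sigma_0}x_\tau = x_{\sigma_0\tau\sigma_0^{-1}}$.

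The main obstacle I anticipate is the \emph{ordering} bookkeeping in the induction rather than any deep computation: I must confirm that at every stage of the algorithm the \emph{same} $\star$ is selected in both runs (``the symbol with the largest right-hand neighbor''), so that the reductions proceed in lockstep. This is exactly where the order-preserving hypothesis is indispensable — it ensures that comparisons between right-hand neighbors are preserved under $\sigma_0$, so the same positional choices are made throughout. Verifying that the selection rule commutes with the relabeling, and that the standard-cycle-form normalization is stable under order-preserving conjugation, is the delicate part; the algebraic facts that $T_{\sigma_0}$ respects concatenation and the diamond product are routine consequences of lemma \ref{properties translations}.
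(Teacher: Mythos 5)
Your proposal is correct and follows essentially the same route as the paper's proof: reduce to the chain identity $T_{\sigma_{0}}x_{\tau} = x_{\sigma_{0}\tau\sigma_{0}^{-1}}$ via point \textit{3.} of lemma \ref{properties translations}, observe that $T_{\sigma_{0}}x_{\tau}$ is the output of algorithm \ref{basis generation} on the relabeled cycle form, and invoke the classic conjugation formula $\sigma_{0}(a_{1}\dots a_{m})\sigma_{0}^{-1} = (\sigma_{0}(a_{1})\dots\sigma_{0}(a_{m}))$. If anything, you are more careful than the paper, which compresses into ``it is easy to see'' the lockstep argument you spell out — namely that order-preservation guarantees both that the relabeled form is standard and that the algorithm's ``largest right-hand neighbor'' selections coincide in the two runs (a genuine issue, since $\diamond$ is not associative and the reduction order affects the output).
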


\begin{proof}
If $\tau = id$, $\psi_{id}$ is invariant under translations and the equality is trivially verified. We assume $\tau\neq id$, thus $\psi_{\tau} = \phi_{n}\, x_{\tau}$. By lemma \ref{properties translations}, $T_{\sigma_{0}}\phi_{n}\, x_{\tau} = \phi_{n}\, T_{\sigma_{0}}x_{\tau}$. Let $\gamma_{1}\dots\gamma_{r}$ be the standard cycle form of $\tau$ with $\gamma_{i} = (a_{i, 1}\dots a_{i,k_{i}})$. Then it is easy to see that $T_{\sigma_{0}}x_{\tau}$ is the output of algorithm \ref{basis generation} when taking as input the permutation with cycle form $\gamma_{1}^{\prime}\dots\gamma_{r}^{\prime}$ where $\gamma_{i}^{\prime} = (\sigma_{0}(a_{i, 1})\dots\sigma_{0}(a_{i,k_{i}}))$. The order-preserving condition on $\sigma_{0}$ assures that this is a standard cycle form. The proof is concluded by a classic result (or a simple verification) that says that this is the cycle form of the permutation $\sigma_{0}\tau\sigma_{0}^{-1}$.
\end{proof}

The third general property concerns the marginals of the wavelet functions. It actually only relies on the embedding operator $\phi_{n}$, and not on algorithm \ref{basis generation}. For $A\in\mathcal{P}(\n)$, we define the embedding operator $\phi_{A} : \bigoplus_{B\subset A}L(\Gamma(B)) \rightarrow L(\Rank{A})$ on the Dirac functions by
\begin{equation}
\phi_{A} : \pi \mapsto \1{\Rank{A}[\pi]} = \sum_{\sigma\in\Rank{A}[\pi]}\sigma.
\end{equation}

\begin{proposition}
\label{marginal-embedding}
Let $A\in\mathcal{P}(\n)$ and $\pi\in\Gamma_{n}$ such that $c(\pi)\subset A$. Then
\[
M_{A}\phi_{n}\pi = \frac{(n-\vert\pi\vert+1)!}{(\vert A\vert-\vert\pi\vert+1)!}\ \phi_{A}\pi.
\]
\end{proposition}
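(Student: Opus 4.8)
The plan is to reduce everything to the identity $M_A = \varrho_{\n\setminus A}$ established in equation \eqref{characterization marginal} and then to track the effect of a single deletion on the indicator function of a ``contiguous'' set. First I would write $M_A\phi_n\pi = \varrho_{\n\setminus A}\1{\Sn[\pi]}$ and observe that, since $c(\pi)\subset A$, none of the letters of $\pi$ is ever deleted, so the block $\pi$ survives each deletion intact as a contiguous subword.

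The heart of the argument is the following single-step lemma: for $B\subset\n$ with $c(\pi)\subset B$ and any $b\in B\setminus c(\pi)$,
\[
\varrho_b\1{\Rank{B}[\pi]} = (\vert B\vert - \vert\pi\vert + 1)\,\1{\Rank{B\setminus\{b\}}[\pi]}.
\]
To prove it I would use the explicit description $\Rank{B}[\pi] = \{\omega_1\pi\omega_2 \;\vert\; (\omega_1,\omega_2)\in\Gamma(B)^2,\ c(\omega_1)\sqcup c(\omega_2) = B\setminus c(\pi)\}$ recalled just before the statement. Deleting $b$ from such a word $\omega_1\pi\omega_2$ (where $b$ lies in $\omega_1$ or $\omega_2$, never inside the block) produces a word of $\Rank{B\setminus\{b\}}[\pi]$, so $\varrho_b$ maps $\Rank{B}[\pi]$ into $\Rank{B\setminus\{b\}}[\pi]$. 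The key point is that this map has constant fibre size: given $\tau\in\Rank{B\setminus\{b\}}[\pi]$, its preimages are exactly the words obtained by reinserting $b$ into $\tau$ at any position that does not split the block $\pi$. Treating $\pi$ as a single symbol, $\tau$ consists of $\vert B\vert - \vert\pi\vert$ symbols and hence has $\vert B\vert - \vert\pi\vert + 1$ admissible gaps, giving exactly $\vert B\vert - \vert\pi\vert + 1$ preimages; summing over $\tau$ yields the lemma.

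Then I would finish by induction. Writing $\n\setminus A = \{b_1,\dots,b_{n-\vert A\vert}\}$ and applying the lemma repeatedly, the size parameter decreases from $n$ down to $\vert A\vert$, and the accumulated scalar is the telescoping product
\[
(n-\vert\pi\vert+1)(n-\vert\pi\vert)\cdots(\vert A\vert-\vert\pi\vert+2) = \frac{(n-\vert\pi\vert+1)!}{(\vert A\vert-\vert\pi\vert+1)!}.
\]
Since $\1{\Sn[\pi]} = \phi_n\pi$ and $\1{\Rank{A}[\pi]} = \phi_A\pi$, this gives $M_A\phi_n\pi = \varrho_{\n\setminus A}\1{\Sn[\pi]} = \frac{(n-\vert\pi\vert+1)!}{(\vert A\vert-\vert\pi\vert+1)!}\,\phi_A\pi$, as claimed.

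The main obstacle is the fibre count in the single-step lemma: one must verify that every word of $\Rank{B\setminus\{b\}}[\pi]$ has the \emph{same} number of preimages, which hinges on $b\notin c(\pi)$ so that reinsertion is constrained only by the requirement of not breaking the block. The hypothesis $c(\pi)\subset A$ is exactly what guarantees that throughout the entire deletion process the block is never touched, keeping the fibre count uniform at each stage. An equivalent route, bypassing the induction, would be to count directly the fibres of the single map $\sigma\mapsto\sigma_{\vert A}$ from $\Sn[\pi]$ to $\Rank{A}[\pi]$, by inserting the $n-\vert A\vert$ letters of $\n\setminus A$ into a fixed $\tau\in\Rank{A}[\pi]$ everywhere outside the block; this produces the same factor $\frac{(n-\vert\pi\vert+1)!}{(\vert A\vert-\vert\pi\vert+1)!}$ in one stroke.
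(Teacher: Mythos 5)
Your proof is correct, but it follows a genuinely different route from the paper's. The paper obtains the proposition as an immediate consequence of a more general counting result, Lemma \ref{cardinality-intersection}: for an \emph{arbitrary} pair $(\pi,\pi^{\prime})$ of injective words it computes $\vert\Sn[\pi]\cap\Sn(\pi^{\prime})\vert$ in one stroke by a stars-and-bars argument (distributing the letters of $\n\setminus(c(\pi)\cup c(\pi^{\prime}))$ into the blanks surrounding the letters of $\pi^{\prime}$ and the block $\pi$, then reordering), and then specializes to $\pi^{\prime}\in\Rank{A}$ with $c(\pi)\subset A$, where the count equals $(n-\vert\pi\vert+1)!/(\vert A\vert-\vert\pi\vert+1)!$ precisely when $\pi^{\prime}$ contains $\pi$ as a contiguous subword, and $0$ otherwise. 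You instead stay entirely inside the deletion-operator formalism: starting from $M_{A}=\varrho_{\n\setminus A}$ you peel off one letter at a time, and the whole content of your argument is the constant fibre count $\vert B\vert-\vert\pi\vert+1$ for the map induced by $\varrho_{b}$ from $\Rank{B}[\pi]$ to $\Rank{B\setminus\{b\}}[\pi]$, followed by a telescoping product; the count is sound (it is consistent with $\vert\Rank{B}[\pi]\vert=(\vert B\vert-\vert\pi\vert+1)!$), and the hypothesis $b\notin c(\pi)$ is indeed exactly what keeps the block intact and the fibres uniform. Your induction is more elementary, avoids any global combinatorial count, and meshes naturally with the projective-system structure of subsection \ref{subsec:injective-words}; what it gives up is generality, since it only establishes the special case needed here, whereas the paper's lemma also covers pairs with $c(\pi)\not\subset c(\pi^{\prime})$ and the vanishing case where the common content fails to sit contiguously in $\pi^{\prime}$. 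Finally, the ``equivalent route'' you sketch at the end — counting the fibres of $\sigma\mapsto\sigma_{\vert A}$ over a fixed $\tau\in\Rank{A}[\pi]$ by inserting the letters of $\n\setminus A$ outside the block — is essentially the paper's counting argument restricted to this special case, so your main proof and your aside together recover both perspectives.
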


Proposition \ref{marginal-embedding} is a direct consequence of the following lemma, of which proof is given in the Appendix section.

\begin{lemma}
\label{cardinality-intersection}
Let $(\pi,\pi^{\prime})\in(\Gamma_{n})^{2}$, and $\pi^{0}$ be the subword of $\pi$ with content $c(\pi)\cap c(\pi^{\prime})$. Denoting by $\vert\pi\vert = k$, $\vert\pi^{\prime}\vert = l$ and $\vert c(\pi)\cap c(\pi^{\prime})\vert = m$, we have
\begin{equation*}
\vert\Sn[\pi]\cap\Sn(\pi^{\prime})\vert =
\left\{ \begin{aligned}
 \frac{(n-k+1)!}{(l-m+1)!} &\qquad\text{if $\pi^{0}$ is a contiguous subword of $\pi^{\prime}$,}\\
 0\qquad &\qquad\text{otherwise.}
 \end{aligned}\right.
\end{equation*}
\end{lemma}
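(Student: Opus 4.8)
The plan is to realize each $\sigma\in\Sn[\pi]$ in the block form $\sigma = \omega_{1}\pi\omega_{2}$ provided by the characterization of $\Sn[\pi]$ recalled above, and then to translate the condition $\sigma\in\Sn(\pi')$ into a condition on the relative order of a small number of ``items''. First I would compute the restriction $\sigma_{\vert c(\pi')}$: scanning $\sigma = \omega_{1}\pi\omega_{2}$ and keeping only the letters of $c(\pi')$ yields
\[
\sigma_{\vert c(\pi')} = (\omega_{1})_{\vert c(\pi')}\ \pi^{0}\ (\omega_{2})_{\vert c(\pi')},
\]
since the letters of $c(\pi)\cap c(\pi')$ appearing inside the block $\pi$ form exactly $\pi^{0}$. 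Hence $\sigma\in\Sn(\pi')$ holds if and only if $\pi'$ admits the factorization $\pi' = (\omega_{1})_{\vert c(\pi')}\,\pi^{0}\,(\omega_{2})_{\vert c(\pi')}$. Because $\pi^{0}$ then occurs as one contiguous block of $\pi'$, this is impossible unless $\pi^{0}$ is a contiguous subword of $\pi'$, which settles the second case and gives the value $0$.

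Now assume $\pi^{0}$ is a contiguous subword of $\pi'$. Since $\pi'$ is injective, the occurrence of $\pi^{0}$ is unique, so there is a unique factorization $\pi' = \alpha\,\pi^{0}\,\beta$, and the condition above becomes $(\omega_{1})_{\vert c(\pi')} = \alpha$ together with $(\omega_{2})_{\vert c(\pi')} = \beta$. In particular the letters of $c(\pi')\setminus c(\pi)$ lying in $\omega_{1}$ must be exactly those of $c(\alpha)$, appearing in the order of $\alpha$, and likewise for $\omega_{2}$ and $\beta$; the remaining letters, namely those of $\n\setminus(c(\pi)\cup c(\pi'))$, are unconstrained.

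To count, I would encode $\sigma$ as a linear arrangement of the $n-k+1$ ``items'' consisting of the single block $\pi$ together with the $n-k$ individual letters of $\n\setminus c(\pi)$; this is a bijection with $\Sn[\pi]$, matching $\vert\Sn[\pi]\vert = (n-k+1)!$. The condition $\sigma\in\Sn(\pi')$ depends only on the relative order of the $l-m+1$ \emph{relevant} items, i.e. the block $\pi$ and the $l-m$ letters of $c(\pi')\setminus c(\pi)$: reading off these items in order and replacing the block by $\pi^{0}$ recovers $\sigma_{\vert c(\pi')}$. Exactly one relative order of the relevant items yields $\pi'$, namely the letters of $\alpha$, then the block, then the letters of $\beta$. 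The final step is the symmetry principle: among the $(n-k+1)!$ arrangements of all items, each of the $(l-m+1)!$ possible relative orders of a fixed set of $l-m+1$ items is realized equally often, hence by $(n-k+1)!/(l-m+1)!$ arrangements, which is the announced value.

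I expect the main obstacle to be organizing the count so as to avoid a messy sum over the position of the block and over the interleavings of the free letters; the device of collapsing $\pi$ to a single item and invoking the uniform distribution of relative orders is what makes this transparent. The only other point requiring care is checking that the factorization $\pi' = \alpha\,\pi^{0}\,\beta$ is forced and unique, which uses the injectivity of $\pi'$.
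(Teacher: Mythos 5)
Your proof is correct, and it takes a genuinely different route from the paper's in both halves of the argument. For the vanishing case, the paper constructs a witness---a subword $\pi^{\ast}$ of $\pi^{0}$ contiguous in $\pi$, a letter $a\in c(\pi^{\prime})\setminus c(\pi^{0})$ and an interior position $i$ with $\pi^{\ast}\triangleleft_{i}a$ a subword of $\pi^{\prime}$---and derives a contradiction; that construction tacitly assumes the common letters occur in the same relative order in $\pi$ and $\pi^{\prime}$ (if they do not, no such $a$ need exist: take $\pi = 12$, $\pi^{\prime} = 21$, where $c(\pi^{\prime})\setminus c(\pi^{0}) = \emptyset$). Your identity $\sigma_{\vert c(\pi^{\prime})} = (\omega_{1})_{\vert c(\pi^{\prime})}\,\pi^{0}\,(\omega_{2})_{\vert c(\pi^{\prime})}$ for $\sigma = \omega_{1}\pi\omega_{2}\in\Sn[\pi]$ disposes of both sub-cases (wrong order, or right order but non-contiguous) in one stroke, so on this point your argument is cleaner and in fact more complete than the paper's. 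For the count, the paper fixes the skeleton $\pi^{\prime}_{1}\dots\pi^{\prime}_{i-1}\,\pi\,\pi^{\prime}_{i+m}\dots\pi^{\prime}_{l}$ and enumerates the fillings of its $l-m+2$ gaps by the $n-(k+l-m)$ free letters via stars and bars, $\binom{n-k+1}{l-m+1}(n-(k+l-m))! = (n-k+1)!/(l-m+1)!$, whereas you collapse $\pi$ to a single item among $n-k+1$ and invoke the exchangeability principle: each of the $(l-m+1)!$ relative orders of the relevant items is realized by exactly $(n-k+1)!/(l-m+1)!$ arrangements, and precisely one order is admissible. Both counts are valid; yours avoids the composition/binomial manipulation at the price of the (standard) symmetry principle. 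One caveat you share with the paper: both proofs, and indeed the stated formula, require $m\geq 1$. You need it for the uniqueness of the factorization $\pi^{\prime} = \alpha\,\pi^{0}\,\beta$ (the empty word occurs everywhere), the paper needs it to define the position $i$ of $\pi^{0}$ in $\pi^{\prime}$; when $c(\pi)\cap c(\pi^{\prime}) = \emptyset$ the block may occupy any of the $l+1$ slots relative to the letters of $\pi^{\prime}$, so the true count is $(n-k+1)!/l!$ rather than the displayed value. This is immaterial for the paper, which only invokes the lemma with $c(\pi)\subset c(\pi^{\prime})$ (proposition \ref{marginal-embedding}), but it would be worth stating the hypothesis $c(\pi)\cap c(\pi^{\prime})\neq\emptyset$ explicitly in your write-up.
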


The combination of proposition \ref{information localization} and \ref{marginal-embedding} give an explicit formula for the marginals of any elements of a space $W_{B}$, in particular for the marginals of wavelet functions.

\begin{proposition}[Marginals of the wavelet functions]
\label{marginals wavelets}
Let $A\in\mathcal{P}(\n)$. $M_{A}\psi_{id}$ is the constant function on $\Rank{A}$ equal to $n!/\vert A\vert!$, and for $\tau\in\Sn\setminus\{id\}$,
\begin{equation*}
M_{A}\psi_{\tau} = \left\{
\begin{aligned}
\frac{(n-\vert\tau\vert+1)!}{(\vert A\vert - \vert\tau\vert + 1)!}\ \phi_{A}(x_{\tau}) &\qquad\qquad\text{if } \supp(\tau)\subset A,\\
0 \qquad\qquad\qquad&\qquad\qquad\text{otherwise.}
\end{aligned}\right.
\end{equation*}
\end{proposition}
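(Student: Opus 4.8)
The plan is to split the statement into three cases and assemble earlier results rather than compute anything new: the case $\tau = id$, the vanishing case $\supp(\tau)\not\subset A$, and the case $\supp(\tau)\subset A$.

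First I would dispose of $\psi_{id} = \1{\Sn}$ directly from the definition of the marginal operator. For each $\pi\in\Rank{A}$ we have $M_A\1{\Sn}(\pi) = \sum_{\sigma\in\Sn(\pi)}1 = \vert\Sn(\pi)\vert$, and Lemma \ref{combinatorics} (taken with $B = \n$) gives $\vert\Sn(\pi)\vert = n!/\vert A\vert!$ independently of $\pi$. Hence $M_A\psi_{id}$ is the announced constant function on $\Rank{A}$.

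For $\tau\neq id$ I would first record that $\psi_\tau = \phi_n(x_\tau)$ with $x_\tau\in H_{\supp(\tau)}$ by Proposition \ref{elements of H}, so that $\psi_\tau\in\phi_n(H_{\supp(\tau)}) = W_{\supp(\tau)}$. When $\supp(\tau)\not\subset A$, equivalently $A\not\supset\supp(\tau)$, applying Proposition \ref{information localization} to the space $W_{\supp(\tau)}$ shows that the marginal set $A$ figures among the indices $B$ in the intersection $\bigcap_{B\not\supset\supp(\tau)}\ker M_B$; hence $W_{\supp(\tau)}\subset\ker M_A$ and $M_A\psi_\tau = 0$, settling the ``otherwise'' branch. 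For the remaining case $\supp(\tau)\subset A$, I would expand $x_\tau = \sum_{\pi}x_\tau(\pi)\pi$ over the words $\pi$ in its support --- all of which have content exactly $\supp(\tau)$, hence common size $\vert\pi\vert = \vert\tau\vert$, since $x_\tau\in L(\Gamma(\supp(\tau)))$. Applying $M_A$ and using linearity together with Proposition \ref{marginal-embedding} term by term (each $\pi$ satisfies $c(\pi) = \supp(\tau)\subset A$), every summand acquires the same scalar $\frac{(n-\vert\tau\vert+1)!}{(\vert A\vert-\vert\tau\vert+1)!}$, which factors out, leaving $\frac{(n-\vert\tau\vert+1)!}{(\vert A\vert-\vert\tau\vert+1)!}\,\phi_A(x_\tau)$ by linearity of $\phi_A$.

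The argument is essentially an assembly of the two cited propositions rather than a fresh calculation, so there is no serious obstacle; the one point requiring care --- and the step I would single out --- is checking that the coefficient produced by Proposition \ref{marginal-embedding} is constant across the entire support of $x_\tau$. This is exactly what the constancy of $\vert\pi\vert = \vert\tau\vert$ on $\Gamma(\supp(\tau))$ guarantees, and it is what permits pulling the scalar outside the sum so as to recover $\phi_A(x_\tau)$ rather than a term-dependent combination.
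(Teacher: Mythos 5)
Your proposal is correct and follows exactly the route the paper intends: the paper gives no separate proof of Proposition \ref{marginals wavelets}, stating only that it follows by combining Proposition \ref{information localization} (which you use for the case $\supp(\tau)\not\subset A$) with Proposition \ref{marginal-embedding} (which you apply termwise, with the constant coefficient $\frac{(n-\vert\tau\vert+1)!}{(\vert A\vert-\vert\tau\vert+1)!}$ factoring out precisely because every word in the support of $x_{\tau}$ has size $\vert\tau\vert$). Your handling of the $\tau=id$ case via Lemma \ref{combinatorics} is likewise the intended argument, so there is nothing to add.
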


This last proposition provides the explicit wavelet basis for the space $\mathbb{M}_{\mathcal{A}}$ for any observation design $\mathcal{A}\subset\mathcal{P}(\n)$.

\begin{example}
We come back to the same example as in subsection \ref{subsec:injective-words}: $n=4$ and $\mathcal{A} = \{ \{1,3\}, \{2,4\},\\ \{3,4\}, \{1,2,3\}, \{1,3,4\} \}$. The space $\mathbb{M}_{\mathcal{A}}$ has dimension $11$ and its basis is represented by figure \ref{fig:wavelet-basis} (only the marginals on subsets $A\in\mathcal{A}$ are represented, all with the same scale).

\begin{figure}[h!]
\centering

\begin{tabular}{@{}c@{}c@{}c@{}}
$(1/2)\psi_{id}$ & $\psi_{(12)}$ & $\psi_{(13)}$ \\
\includegraphics[scale=0.9]{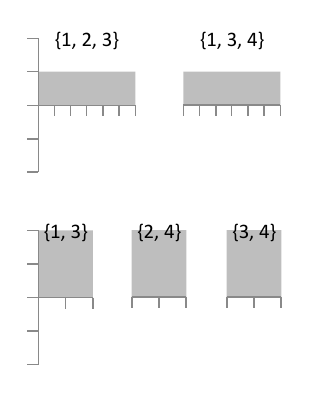} & \includegraphics[scale=0.9]{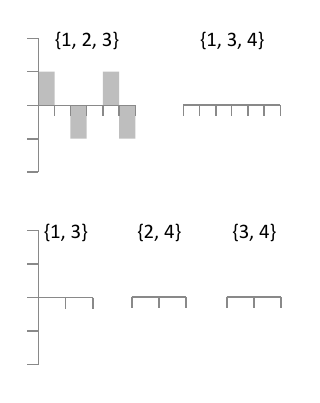} & \includegraphics[scale=0.9]{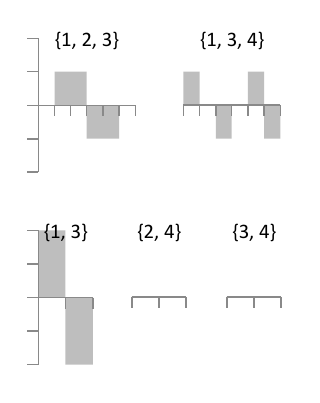}
\end{tabular}

\begin{tabular}{@{}c@{}c@{}c@{}c@{}}
$\psi_{(14)}$ & $\psi_{(23)}$ & $\psi_{(24)}$ & $\psi_{(34)}$ \\
\includegraphics[scale=0.9]{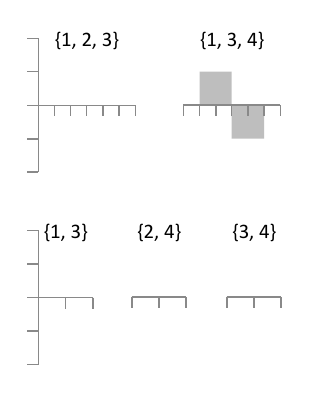} & \includegraphics[scale=0.9]{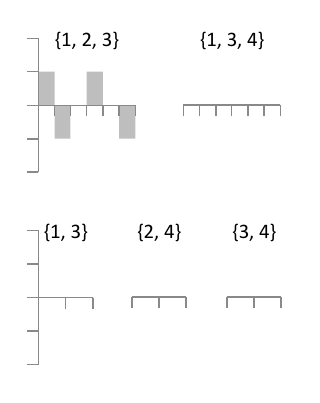} & \includegraphics[scale=0.9]{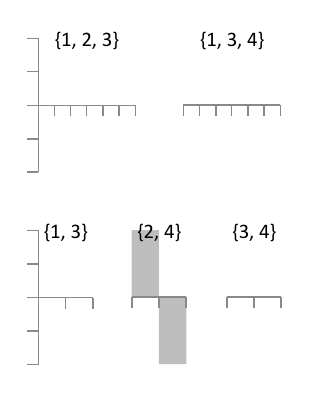} & \includegraphics[scale=0.9]{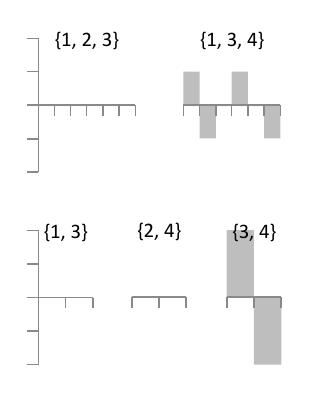}
\end{tabular}

\begin{tabular}{@{}c@{}c@{}c@{}c@{}}
 $\psi_{(123)}$ & $\psi_{(132)}$ & $\psi_{(134)}$ & $\psi_{(143)}$ \\
\includegraphics[scale=0.9]{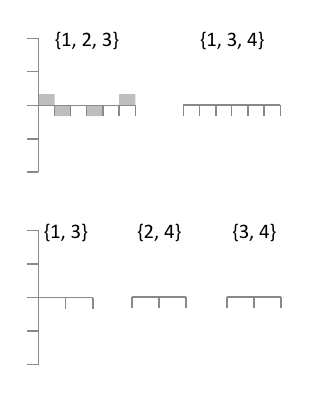} & \includegraphics[scale=0.9]{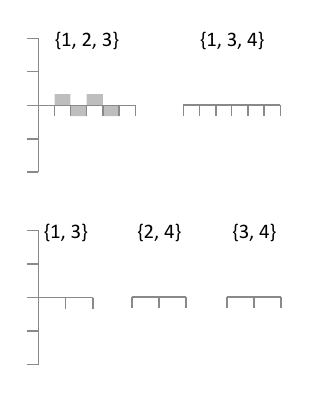} & \includegraphics[scale=0.9]{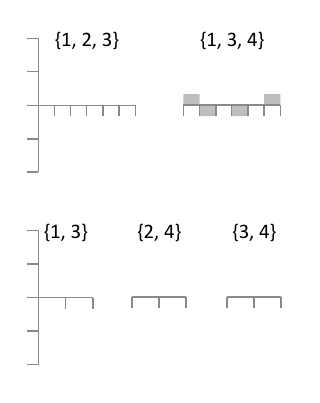} & \includegraphics[scale=0.9]{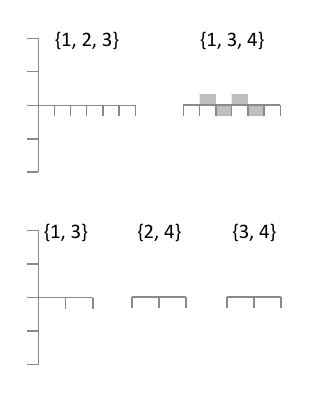}
\end{tabular}

\caption{Wavelet basis of $\mathbb{M}_{\mathcal{A}}$, for $\mathcal{A} = \{ \{1,3\}, \{2,4\}, \{3,4\}, \{1,2,3\}, \{1,3,4\} \}$}
\label{fig:wavelet-basis}
\end{figure}

\end{example}

\subsection{Structure of the wavelet basis}
\label{subsec:structure-wavelet-basis}

The properties of a multiresolution analysis $(\tilde{V}^{j})_{j\in\mathbb{Z}}$ on $L^{2}(\mathbb{R})$ directly lead to the definition of an adapted wavelet basis  $(\tilde{\psi}_{j,n})_{(j,n)\in\mathbb{Z}^{2}}$: take $\tilde{\psi}\in \tilde{V}^{0}$ and define $\tilde{\psi}_{j,n}(x) = 2^{j/2}\tilde{\psi}(2^{j}x-n)$. Then $(\tilde{\psi}_{j,n})_{(j,n)\in\mathbb{Z}^{2}}$ is a basis of $L^{2}(\mathbb{R})$ adapted to $(\tilde{V}^{j})_{j\in\mathbb{Z}}$ (see \cite{Mallat1989}) and has a simple interpretation, $\tilde{\psi}$ is the ``mother'' wavelet and all the wavelet functions are obtained from $\tilde{\psi}$ by dilation and translation. More specifically, at scale $j$, the wavelet function $\tilde{\psi}_{j,0}$ is obtained by dilation of $\tilde{\psi}_{j-1,0}$, $\tilde{\psi}_{j,0}(x) = \sqrt{2}\tilde{\psi}_{j-1,0}(2x)$, and all the $\tilde{\psi}_{j,n}$'s by translation of $\tilde{\psi}_{j,0}$, $\tilde{\psi}_{j,n}(x) = \tilde{\psi}_{j,0}(x-2^{-j}n)$. These relations encode the structure of the basis and are at the core of many applications.

In the present setting, while the translation operators are adapted to the multiresolution decomposition, the latter can only be equipped with a dezooming operator, as explained in subsection \ref{subsec:multiresolution-analysis}. Hence, there is no natural operation that, in conjunction with translations, would fully encode the structure of any wavelet basis associated to it. Our wavelet basis possesses however a particular structure that stems from its generative algorithm. It is encoded in two relations that show how to obtain a wavelet chain $x_{\tau}$ from the wavelet chains $x_{\tau^{\prime}}$ of lower scales. The first relation encode the links between wavelet chains indexed by one cycle. It uses a recursive structure on cycles, given by the following lemma, the proof of which is only technical and left in appendix.

\begin{lemma}
\label{cycles-combinatorics}
Let $A = \{a_{1}, \dots, a_{k}\}\subset\n$ with $k\in\{1, \dots, n-1\}$ and $b\not\in A$. Then
\begin{enumerate}
	\item $(a_{1}\, \dots\, a_{k})(a_{j}\, b) = (a_{1}\,\dots\,a_{j}\,b\,a_{j+1}\,\dots\,a_{k}) \quad$ for $j\in\{1, \dots, k\}$,
	\item $\Cycle(A\cup\{b\}) = \{ \gamma\cdot(a_{j}\, b) \;\vert\; j\in\{ 1, \dots, k\}, \gamma\in\Cycle(A)\}$.
\end{enumerate}
\end{lemma}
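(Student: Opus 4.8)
The plan is to prove part 1 by a direct point-wise computation of the permutation $\gamma t$, and then to deduce part 2 by combining part 1 with an injectivity-plus-cardinality argument.

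For part 1, set $\gamma = (a_{1}\dots a_{k})$ and $t = (a_{j}\, b)$, and recall the composition law $\gamma t(i) = \gamma(t(i))$, so that $t$ acts \emph{first}. I would evaluate $\gamma t$ on every point of $\n$, distinguishing three cases (with the convention $a_{k+1} := a_{1}$): for $a_{i}$ with $i\neq j$, $t$ fixes $a_{i}$ and hence $\gamma t(a_{i}) = \gamma(a_{i}) = a_{i+1}$; for $a_{j}$, one has $t(a_{j}) = b$ and $\gamma(b) = b$, so $\gamma t(a_{j}) = b$; and for $b$, one has $t(b) = a_{j}$ and $\gamma(a_{j}) = a_{j+1}$, so $\gamma t(b) = a_{j+1}$. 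Every point outside $A\cup\{b\}$ is fixed by both $t$ and $\gamma$. Comparing these images with the action of the claimed cycle $(a_{1}\dots a_{j}\, b\, a_{j+1}\dots a_{k})$ shows they agree on all of $\n$, which is exactly part 1. The only thing to watch is the order of composition and the wrap-around at $j = k$.

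For part 2, I would first obtain the inclusion $\subseteq$: reading part 1 for an arbitrary cyclic ordering of $A$ (by relabeling), every product $\gamma\cdot(a_{j}\, b)$ with $\gamma\in\Cycle(A)$ is the cycle obtained by inserting $b$ immediately after $a_{j}$ in the cyclic sequence of $\gamma$, hence is a single cycle of length $k+1$ with support $A\cup\{b\}$, so it lies in $\Cycle(A\cup\{b\})$. To upgrade this to an equality I would show that the map $\Phi:\Cycle(A)\times\{1,\dots,k\}\to\Cycle(A\cup\{b\})$, $(\gamma,j)\mapsto \gamma\cdot(a_{j}\, b)$, is injective. Writing $\delta = \gamma\cdot(a_{j}\, b)$, I compute $\delta(a_{j}) = \gamma\bigl((a_{j}\, b)(a_{j})\bigr) = \gamma(b) = b$, since $b\notin\supp(\gamma) = A$; thus $a_{j} = \delta^{-1}(b)$ is recovered from $\delta$, so $j$ is determined, and then $\gamma = \delta\cdot(a_{j}\, b)$ is determined as well (because $(a_{j}\, b)$ is an involution). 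Finally, $\lvert\Cycle(A)\rvert\cdot k = (k-1)!\,k = k! = \lvert\Cycle(A\cup\{b\})\rvert$, so the injective map $\Phi$ between finite sets of equal cardinality is a bijection, which is precisely the asserted equality.

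All the computations are elementary; the only genuine subtlety is bookkeeping the composition order (recall $\sigma\tau$ applies $\tau$ first) together with the boundary index $j=k$. I would also dispatch the degenerate case $k=1$ separately: there one uses the natural convention that the trivial arrangement of the singleton $\{a_{1}\}$ represents the identity, and both parts collapse to the identity $id\cdot(a_{1}\, b) = (a_{1}\, b)$, which is the unique cycle on $\{a_{1},b\}$.
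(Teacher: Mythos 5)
Your proof is correct, and part 1 coincides exactly with the paper's argument: the same point-wise evaluation of $\gamma\,(a_{j}\,b)$ on the $a_{i}$ with $i\neq j$, on $a_{j}$, on $b$, and on the points outside $A\cup\{b\}$, with the wrap-around convention $a_{k+1}=a_{1}$. Where you diverge is part 2. The paper gets the reverse inclusion constructively: given $\delta\in\Cycle(A\cup\{b\})$ it sets $a^{\ast}=\delta^{-1}(b)$, lets $\gamma^{\prime}$ be the cycle obtained by deleting $b$ from $\delta$, and invokes part 1 to conclude $\delta=\gamma^{\prime}\cdot(a^{\ast}\,b)$; surjectivity is thus exhibited directly. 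You instead prove that the insertion map $\Phi:(\gamma,j)\mapsto\gamma\cdot(a_{j}\,b)$ is injective (recovering $a_{j}=\delta^{-1}(b)$, hence $j$, hence $\gamma=\delta\cdot(a_{j}\,b)$) and then close with the count $\vert\Cycle(A)\vert\cdot k=(k-1)!\,k=k!=\vert\Cycle(A\cup\{b\})\vert$. Both routes are sound. The paper's is shorter and purely constructive --- it exhibits the preimage (deletion of $b$), which is precisely the operation underlying the recursive tree enumeration of cycles that the lemma is used for. Yours buys the sharper conclusion that $\Phi$ is a bijection, i.e.\ each cycle on $A\cup\{b\}$ has a \emph{unique} parent, at the cost of invoking the cycle-count formulas; note the paper's argument also yields this uniqueness implicitly, since $a^{\ast}=\delta^{-1}(b)$ is forced. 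Your separate treatment of $k=1$ is a genuine improvement in care: under the paper's definition a cycle has length at least two, so $\Cycle(\{a_{1}\})$ is strictly speaking empty, and both your counting identity and the statement itself only hold under the convention (which you state) that the singleton contributes the identity; the paper's proof silently glosses over this degenerate case.
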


This lemma means that the set of cycles with support $A\cup\{b\}$ can be obtained recursively from the cycles with support $A$ by inserting $b$ in each cycle $\gamma\in\Cycle(A)$ to the right of an element $a_{j}$ of this cycle. This can be represented by a tree.

\begin{example}
Cycles with support $\{1,2,3,4\}$ are obtained via the following tree.
\begin{center}
\includegraphics[scale=0.5]{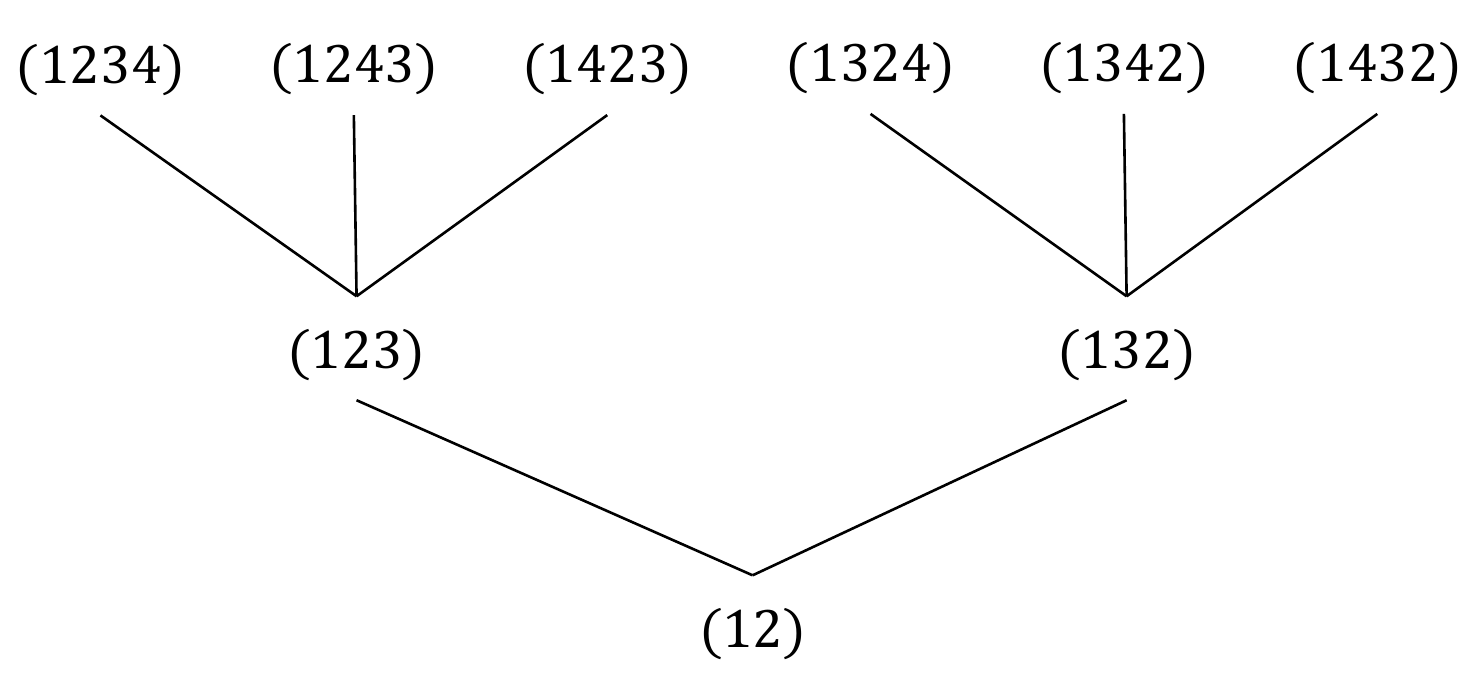}
\end{center}
\end{example}

For $a,b\in\n$, we define the elementary chain $\varepsilon_{b,a}\in L(\Gamma_{n})$ by
\begin{equation*}
\varepsilon_{b,a}(\pi) = \left\{
\begin{aligned}
1 &\qquad\text{if } a,b\in c(\pi) \text{ and } \pi(b) - \pi(a) = 1,\\
-1 &\qquad\text{if } a,b\in c(\pi) \text{ and } \pi(b) - \pi(a) = -1,\\
0 &\qquad\text{otherwise}.
\end{aligned}
\right.
\end{equation*}

\begin{theorem}
\label{cycle-recurrence}
Let $\gamma = (a_{1}\, \dots\, a_{k})$, $A = \supp(\gamma)$, $b > \max A$ and $j\in\{1, \dots, k\}$. Then for all $\pi\in\Rank{A\cup\{b\}}$,
\[
x_{\gamma\cdot (a_{j}\, b)} = \varepsilon_{b, a_{j}}(\pi)\, x_{\gamma}(\pi\setminus\{b\}).
\]
\end{theorem}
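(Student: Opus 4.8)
The plan is to reduce the statement to a single structural fact about Algorithm \ref{basis generation}, after which the coefficient formula follows by pure bilinear algebra. By part 1 of Lemma \ref{cycles-combinatorics} we have $\gamma\cdot(a_j\,b) = (a_1\,\dots\,a_j\,b\,a_{j+1}\,\dots\,a_k)$, so the task is to compare the output of the algorithm on this enlarged cycle with its output $x_\gamma$ on $\gamma = (a_1\,\dots\,a_k)$. First I would record that, since $x_\gamma\in H_A$, every word in $\supp(x_\gamma)$ has content exactly $A$; in particular each such word contains $a_j$ exactly once and can be written uniquely as $u\,a_j\,v$ for (possibly empty) words $u,v$.

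The key structural claim is that $x_{\gamma\cdot(a_j\,b)}$ is obtained from $x_\gamma$ by replacing the single leaf $a_j$ with the chain $a_j\diamond b = a_j b - b a_j$. To establish it I would run Algorithm \ref{basis generation} on $(a_1\,\dots\,a_j\,b\,a_{j+1}\,\dots\,a_k)$ and exploit the hypothesis $b > \max A$. Since $b$ is then the globally largest letter, the symbol $\star$ immediately to its left (between $a_j$ and $b$) has the largest right-hand neighbor and is processed first, producing the block $(a_j\diamond b)$ in the slot formerly occupied by $a_j$. From this point the working string differs from the starting string for $\gamma$ only in that $a_j$ has been replaced by the block $(a_j\diamond b)$, the number and positions of the remaining $\star$ symbols being identical. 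I would then argue, by induction on the number of remaining $\star$ symbols, that every subsequent elimination coincides with the corresponding elimination in the run on $\gamma$: the block $(a_j\diamond b)$, whenever it serves as a right-hand neighbor, carries the same comparison key $a_j$ as the bare letter $a_j$ does in the run on $\gamma$, while $b$ itself, now buried inside the block, never reappears as a bare right-hand neighbor. Hence the two runs build the \emph{same} binary tree of $\diamond$-operations, differing only in the leaf $a_j$ versus $(a_j\diamond b)$, which is the claim.

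With the structural claim in hand the coefficient formula is routine. The concatenation product and the operator $\diamond$ (Definition \ref{operator diamond}) are both bilinear, so $x_\gamma$ is multilinear in its leaves, and $a_j$ occupies exactly one leaf; substituting $a_j\diamond b = a_j b - b a_j$ therefore amounts to performing the substitution termwise in each word of $\supp(x_\gamma)$. Writing $x_\gamma = \sum_{\sigma} x_\gamma(\sigma)\,\sigma$ with $\sigma = u\,a_j\,v$, I obtain
\[
x_{\gamma\cdot(a_j\,b)} = \sum_{\sigma} x_\gamma(\sigma)\,\bigl(u\,a_j\,b\,v - u\,b\,a_j\,v\bigr).
\]
Fixing $\pi\in\Rank{A\cup\{b\}}$ and deleting $b$ recovers the unique candidate $\sigma = \pi\setminus\{b\}$, so reading off the coefficient of $\pi$ splits into three cases according to the position of $b$ relative to $a_j$: if $b$ immediately follows $a_j$ then $\pi = u\,a_j\,b\,v$ and the coefficient is $+x_\gamma(\pi\setminus\{b\})$; if $b$ immediately precedes $a_j$ then $\pi = u\,b\,a_j\,v$ and it is $-x_\gamma(\pi\setminus\{b\})$; otherwise $\pi$ belongs to neither family and the coefficient is $0$. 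These three cases are exactly the values $+1$, $-1$, $0$ of $\varepsilon_{b,a_j}(\pi)$, yielding $x_{\gamma\cdot(a_j\,b)}(\pi) = \varepsilon_{b,a_j}(\pi)\,x_\gamma(\pi\setminus\{b\})$.

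The main obstacle is the structural claim, and more precisely the bookkeeping that the run of the algorithm on the enlarged cycle mirrors the run on $\gamma$ after the first elimination. The hypothesis $b > \max A$ is exactly what makes this work: it forces $b$ to be bundled first and then to disappear from all further right-hand-neighbor comparisons, so that those comparisons depend only on the unchanged relative order of $a_1,\dots,a_k$ and treat the block $(a_j\diamond b)$ interchangeably with the letter $a_j$. Making this mirroring rigorous — for instance by exhibiting an explicit bijection between the sequences of $\star$-eliminations in the two runs, or by a careful induction tracking the working string — is the only genuinely technical part; everything after it is bilinearity and a three-way case check.
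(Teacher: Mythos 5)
Your proof is correct and takes essentially the same route as the paper's: identify $\gamma\cdot(a_{j}\,b)$ as the enlarged cycle via Lemma \ref{cycles-combinatorics}, observe that $b>\max A$ forces the algorithm to form the block $(a_{j}\diamond b)$ first so that the remainder of the run coincides with the run on $\gamma$, then expand $a_{j}\diamond b=a_{j}b-ba_{j}$ and read off the three cases matching $\varepsilon_{b,a_{j}}(\pi)$. The only difference is one of detail: you spell out (by induction on the remaining $\star$-eliminations, with the block compared by its leftmost letter $a_{j}$) the mirroring argument that the paper compresses into the shorthand $\ldots\diamond\ldots(a_{j}\diamond b)\ldots\diamond\ldots$, which is a welcome tightening rather than a different method.
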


\begin{proof}
By lemma \ref{cycles-combinatorics}, $\gamma\cdot (a_{j}\, b) = (a_{1}\,...\,a_{j}\,b\,a_{j+1}\,...\,a_{k})$. Since $b > \max A$, applying algorithm \ref{basis generation} to $\gamma\cdot (a_{j}\, b)$ gives
\begin{align*}
x_{\gamma\cdot (a_{j}\, b)} &= x_{(a_{1}\,...\,a_{j}\,b\,a_{j+1}\,...\,a_{k})} \\
							&= ...\diamond...(a_{j}\diamond b)...\diamond... \\
							&= ...\diamond...(a_{j}b -b a_{j})...\diamond... \\
							&= ...\diamond...(a_{j}b)...\diamond... - ...\diamond...(b a_{j})...\diamond...
\end{align*}
Thus for $\pi\in\Rank{A\cup\{b\}}$,
\begin{align*}
x_{\gamma\cdot (a_{j}\, b)}(\pi) &=
\left\{
\begin{aligned}
x_{\gamma}(\pi\setminus\{b\}) &\qquad\text{if } \pi(b) - \pi(a_{j}) = 1 \\
- x_{\gamma}(\pi\setminus\{b\}) &\qquad\text{if } \pi(b) - \pi(a_{j}) = -1 \\
0 &\qquad\text{otherwise}
\end{aligned}\right. \\
&= \varepsilon_{b, a_{j}}(\pi)\, x_{\gamma}(\pi\setminus\{b\}).
\end{align*}
\end{proof}

\begin{example}
For $A = \{1,2,3,4\}$, for all $\pi\in\Rank{A}$,
\[
x_{(1342)}(\pi) = \varepsilon_{4,3}(\pi)\, x_{(132)}(\pi\setminus\{4\}).
\]
\end{example}

Theorem \ref{cycle-recurrence} leads to an explicit formula for $x_{\gamma}(\pi)$ by a simple induction. It just requires some more notations. For $A\in\mathcal{P}(\n)$, we define a sequence of subsets by $A^{(0)} = A$ and
\[
A^{(j)} = A^{(j-1)}\setminus \{\max A^{(j-1 )}\} \qquad\text{for } j\in\{ 1, \dots, \vert A\vert - 1\}.
\]
If $A = \{a_{1}, \dots, a_{k}\}$ with $a_{1} < \dots < a_{k}$, then $A^{(j)} = \{a_{1}, \dots, a_{k-j}\}$. It is easy to see that for any $\gamma\in\Cycle(A)$, there exists a unique $(u_{1}, \dots, u_{k-1})\in A^{(k-1)}\times\dots\times A^{(1)}$, denoted by $\mathbf{u}(\gamma)$,  such that
\[
\gamma = (u_{1}\,a_{2})(u_{2}\,a_{3})\dots(u_{k-1}\,a_{k})
\]
(It is given by $u_{k-1} = \gamma^{-1}(a_{k})$, and $u_{i} = \left[\gamma\,(u_{k-1}\,a_{k})\dots(u_{i+1}\,a_{i+2})\right]^{-1}(a_{i+1})$, for $i\in\{ 1, \dots, k-2\}$).

\begin{corollary}
\label{explicit-recurrence}
Let  $A = \lbrace a_{1}, \dots, a_{k}\rbrace\subset\n$ with $a_{1} <  \dots < a_{k}$, and $\gamma\in\Cycle(A)$. We set $\mathbf{u}(\gamma) = (u_{1}, \dots, u_{k-1})$. Then for all $\pi\in\Rank{A}$,
\[
x_{\gamma}(\pi) = \prod_{j=0}^{k-2}\varepsilon_{a_{k-j},u_{k-j-1}}(\pi_{\vert A^{(j)}}).
\]
\end{corollary}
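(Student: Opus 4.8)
The plan is to argue by induction on $k = \vert A\vert$, using Theorem~\ref{cycle-recurrence} as the recursive engine that strips off the largest letter of $A$ one step at a time. For the base case $k = 2$, the set is $A = \{a_1, a_2\}$ and the unique cycle is $\gamma = (a_1\, a_2)$, so $\mathbf{u}(\gamma) = (a_1)$; running Algorithm~\ref{basis generation} gives $x_\gamma = a_1a_2 - a_2a_1$, whose value at $\pi$ is exactly $\varepsilon_{a_2, a_1}(\pi)$, which is the single ($j=0$) factor of the claimed product.

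For the inductive step, I would peel off $a_k = \max A$. Writing $\gamma = (u_1\, a_2)\cdots(u_{k-1}\, a_k)$ and setting $\gamma^{(1)} = (u_1\, a_2)\cdots(u_{k-2}\, a_{k-1})$, one has $\gamma = \gamma^{(1)}\cdot(u_{k-1}\, a_k)$. Since $a_k > \max A^{(1)}$ and $u_{k-1}\in A^{(1)}$, Theorem~\ref{cycle-recurrence} applies verbatim and gives, for every $\pi\in\Rank{A}$,
\[
x_\gamma(\pi) = \varepsilon_{a_k, u_{k-1}}(\pi)\, x_{\gamma^{(1)}}(\pi\setminus\{a_k\}).
\]
Here the first factor is precisely the $j=0$ term $\varepsilon_{a_k, u_{k-1}}(\pi_{\vert A^{(0)}})$ (as $\pi_{\vert A^{(0)}} = \pi$), and $\pi\setminus\{a_k\} = \pi_{\vert A^{(1)}}$.

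To invoke the induction hypothesis on $x_{\gamma^{(1)}}$ I must first check that $\gamma^{(1)}$ is a cycle with support $A^{(1)} = \{a_1, \dots, a_{k-1}\}$ and that $\mathbf{u}(\gamma^{(1)}) = (u_1, \dots, u_{k-2})$. This follows from Lemma~\ref{cycles-combinatorics}(1): the partial products $\gamma_m = (u_1\, a_2)\cdots(u_{m-1}\, a_m)$ satisfy $\gamma_{m+1} = \gamma_m\cdot(u_m\, a_{m+1})$ with $u_m\in\supp(\gamma_m)$ and $a_{m+1}$ larger than every letter of $\gamma_m$, so each step inserts $a_{m+1}$ into the current cycle; by induction $\gamma_m$ is a cycle on $\{a_1, \dots, a_m\}$, and the uniqueness of the decomposition pins down $\mathbf{u}(\gamma^{(1)})$. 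Applying the hypothesis to $x_{\gamma^{(1)}}(\pi_{\vert A^{(1)}})$, then using $(A^{(1)})^{(j)} = A^{(j+1)}$ and the transitivity of restriction $(\pi_{\vert A^{(1)}})_{\vert A^{(j+1)}} = \pi_{\vert A^{(j+1)}}$, and finally shifting the product index by one reproduces the factors $\varepsilon_{a_{k-j}, u_{k-j-1}}(\pi_{\vert A^{(j)}})$ for $j = 1, \dots, k-2$; multiplying by the $j=0$ factor already extracted closes the induction.

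The analytic content is carried entirely by Theorem~\ref{cycle-recurrence}, so I anticipate no real obstacle; the only delicate part is the bookkeeping, namely confirming that $\gamma^{(1)}$ carries the correct $\mathbf{u}$-vector over $A^{(1)}$ and that the index shift $j\mapsto j+1$ together with the identities $(A^{(1)})^{(j)} = A^{(j+1)}$ aligns every elementary-chain factor with the right restriction of $\pi$.
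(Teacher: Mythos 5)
Your proof is correct and follows exactly the route the paper intends: the paper states that Corollary \ref{explicit-recurrence} follows from Theorem \ref{cycle-recurrence} ``by a simple induction,'' and your argument carries out precisely that induction, peeling off $\max A$ at each step. The bookkeeping you flag as delicate (that $\gamma^{(1)}\in\Cycle(A^{(1)})$ with $\mathbf{u}(\gamma^{(1)})=(u_{1},\dots,u_{k-2})$, via Lemma \ref{cycles-combinatorics} and uniqueness of the decomposition) is handled correctly, so nothing is missing.
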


\begin{example}
For $A = \{1,2,3,4\}$, for all $\pi\in\Rank{A}$,
\[
x_{(1342)}(\pi) = \varepsilon_{4,3}(\pi)\varepsilon_{3,1}(\pi_{\vert\{1,2,3\}})\varepsilon_{2,1}(\pi_{\vert\{1,2\}}).
\]
\end{example}

The second relation that encodes the structure of the wavelet chains gives the link between a wavelet chain indexed by a product of cycles and the wavelet chains indexed by these cycles. It stems directly from the definition of algorithm \ref{basis generation}.

\begin{theorem}
\label{cycle-decomposition}
Let $\tau = \gamma_{1}\dots\gamma_{r} \in\Sn$ written in standard cycle form. Then $x_{\gamma_{1}\dots\gamma_{r}}$ is the concatenation of $x_{\gamma_{1}}$, \dots, $x_{\gamma_{r}}$:
\[
x_{\gamma_{1}\dots\gamma_{r}} = x_{\gamma_{1}}\dots\,x_{\gamma_{r}}.
\]
\end{theorem}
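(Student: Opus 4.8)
The plan is to read the claim off the mechanics of algorithm \ref{basis generation} by showing that its execution on $\tau = \gamma_{1}\dots\gamma_{r}$ factors into $r$ independent executions, one per cycle. The starting observation is that Step 1 inserts a $\star$ only between two consecutive letters \emph{of a single cycle}, never between two cycles. Consequently, immediately after Step 1 the working string is the juxtaposition of $r$ \emph{blocks}, the $i$-th block being $\gamma_{i}$ with $\star$'s inserted, and the boundaries between consecutive blocks carry no $\star$. I would then argue that this block structure is an invariant of the main loop: every replacement $Q\star R \mapsto (Q\diamond R)$ acts on a $\star$ lying strictly inside one block, so both flanking quantities $Q$ and $R$ are built from letters of that same cycle, and the replacement nests a parenthesized quantity inside that block without crossing a block boundary. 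Hence no $\diamond$ ever combines letters of two different cycles, and throughout the run the string remains the juxtaposition of the (partially resolved) blocks.

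The second ingredient concerns the selection rule in Step 2. I would observe that the right-hand neighbor of a given $\star$ --- the original letter immediately to its right in the standard-cycle-form string produced by Step 1 --- is never altered by the replacements: a $\diamond$-replacement only packages the quantity to the right of \emph{some other} $\star$ into a block, leaving unchanged which original letter flanks each surviving $\star$. Writing each cycle in standard form as $(a_{1}\dots a_{m})$ with $a_{1}=\min$, the right-hand neighbors occurring inside it are exactly its non-minimal letters $a_{2},\dots,a_{m}$; over all cycles these are precisely the elements of $\supp(\tau)$ other than the cycle minima, so they are pairwise distinct. Therefore Step 2 determines a fixed strict total order on all the $\star$'s, and its restriction to the $\star$'s of one cycle is exactly the order in which the isolated run of algorithm \ref{basis generation} on that cycle would eliminate them.

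Combining the two points finishes the argument: since the blocks never interact, the subsequence of replacements that touches block $i$ depends only on block $i$'s own state, and by the order statement it coincides, state for state, with the full isolated run on $\gamma_{i}$, the interleaving with operations on other blocks being irrelevant. Hence block $i$ terminates as $x_{\gamma_{i}}$, and the final string is the juxtaposition $x_{\gamma_{1}}\dots x_{\gamma_{r}}$, which is exactly the concatenation product because the supports $\supp(\gamma_{1}),\dots,\supp(\gamma_{r})$ are pairwise disjoint. I would make this rigorous by induction on the number of remaining $\star$'s, carrying the invariant ``the current string is the juxtaposition of blocks, each an intermediate state of the isolated run on its cycle.'' The main obstacle is exactly this bookkeeping: one must pin down that ``right-hand neighbor'' refers to the fixed original right-adjacent letter, so that the global selection order restricts correctly to each cycle, and verify that the block boundaries are genuinely preserved by every $\diamond$-replacement. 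Once these are nailed down, the factorization, and hence the theorem, is immediate.
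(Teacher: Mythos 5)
Your proof is correct and takes essentially the same route as the paper, which offers no separate argument at all: it asserts that the theorem ``stems directly from the definition of algorithm~\ref{basis generation}'' and even invokes $x_{\tau} = x_{\gamma_{1}}\dots x_{\gamma_{r}}$ as definitional in the proof of proposition~\ref{elements of H}. Your two observations (replacements never cross cycle boundaries, and the selection order restricted to one cycle coincides with the isolated run) are precisely the rigorous elaboration of that assertion, and your convention that each $\star$'s comparison key is the letter originally to its right is the reading the paper itself relies on implicitly in the proof of theorem~\ref{cycle-recurrence}.
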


For $\tau = \gamma_{1}\dots\gamma_{r}\in\Sn$ written in standard cycle form, we define the decomposition of a word $\pi\in\Rank{\supp(\tau)}$ associated to the cycle structure of $\tau$ by the tuple of contiguous subwords $(\pi^{1}, \dots, \pi^{r})$ such that $\pi = \pi^{1}...\,\pi^{r}$ and $\vert\pi^{i}\vert = l(\gamma_{i})$ for all $i\in\{1, \dots, r\}$. The explicit version of theorem \ref{cycle-decomposition} is given by the following corollary.

\begin{corollary}
\label{explicit-decomposition}
Let $\tau = \gamma_{1}\dots\gamma_{r}\in\Sn$ written in standard cycle form, and $\pi\in\Rank{\supp(\tau)}$. Let $\left(\pi^{1}, \dots, \pi^{r}\right)$ be the decomposition of $\pi$ associated to the cycle structure of $\tau$. Then
\begin{equation*}
x_{\gamma_{1}\dots\gamma_{r}}(\pi) = \left\{
\begin{aligned}
\prod_{i=1}^{r}x_{\gamma_{i}}\left(\pi^{i}\right) \qquad&\text{if } c(\pi^{i}) = \supp(\gamma_{i}) \text{ for all } i\in\{1, \dots, r\},\\
0 \qquad&\text{otherwise}.
\end{aligned}\right.
\end{equation*}
\end{corollary}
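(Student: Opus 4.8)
The plan is to derive this explicit formula as a direct consequence of Theorem~\ref{cycle-decomposition}, which already gives $x_{\gamma_1\dots\gamma_r} = x_{\gamma_1}\dots x_{\gamma_r}$ as a concatenation product of chains. The entire task is then to compute the coefficient of the word $\pi$ in this product. I would begin by unwinding the definition of the concatenation product: since it is bilinear and defined on Dirac functions, one has
\[
x_{\gamma_1}\dots x_{\gamma_r} = \sum_{\omega_1,\dots,\omega_r\in\Gamma_n} x_{\gamma_1}(\omega_1)\dots x_{\gamma_r}(\omega_r)\,\omega_1\dots\omega_r,
\]
where $\omega_1\dots\omega_r$ denotes the concatenation product of the words, equal to the plain concatenation when their contents are pairwise disjoint and $0$ otherwise. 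Evaluating at $\pi$ therefore yields
\[
x_{\gamma_1\dots\gamma_r}(\pi) = \sum_{\omega_1\dots\omega_r = \pi} \prod_{i=1}^{r} x_{\gamma_i}(\omega_i),
\]
the sum ranging over all factorizations of $\pi$ into consecutive blocks $\omega_1,\dots,\omega_r$.

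Next I would use the support information for each factor. By Proposition~\ref{elements of H}, $x_{\gamma_i}\in H_{\supp(\gamma_i)}\subset L(\Gamma(\supp(\gamma_i)))$, so $x_{\gamma_i}(\omega_i)\neq 0$ forces $c(\omega_i) = \supp(\gamma_i)$, and in particular $\vert\omega_i\vert = l(\gamma_i)$. The key combinatorial observation is that a factorization $\pi = \omega_1\dots\omega_r$ with prescribed block sizes $\vert\omega_i\vert = l(\gamma_i)$ is unique: the blocks must be exactly the contiguous subwords $\pi^1,\dots,\pi^r$ of the decomposition of $\pi$ associated to the cycle structure of $\tau$. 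Hence at most one term in the sum can be nonzero, namely the term $(\omega_1,\dots,\omega_r) = (\pi^1,\dots,\pi^r)$.

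I would then conclude by a case split. If $c(\pi^i) = \supp(\gamma_i)$ for every $i$, this distinguished factorization contributes, giving $x_{\gamma_1\dots\gamma_r}(\pi) = \prod_{i=1}^r x_{\gamma_i}(\pi^i)$. Otherwise there is some $i$ with $c(\pi^i)\neq\supp(\gamma_i)$, so $x_{\gamma_i}(\pi^i)=0$ and, since no other factorization can contribute, the coefficient vanishes. This matches exactly the two branches of the stated formula. The proof is essentially bookkeeping; the only point requiring genuine care—and thus the main (modest) obstacle—is justifying that the size constraints $\vert\omega_i\vert = l(\gamma_i)$ pin down the factorization uniquely and that $\sum_i l(\gamma_i) = l(\tau) = \vert\supp(\tau)\vert = \vert\pi\vert$, so that the blocks tile $\pi$ with no slack; once this is observed, the uniqueness of the nonzero term and hence the product formula follow immediately.
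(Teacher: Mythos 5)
Your proposal is correct and follows exactly the route the paper intends: the paper states Corollary~\ref{explicit-decomposition} as the ``explicit version'' of Theorem~\ref{cycle-decomposition} without further proof, and your argument simply fills in the bookkeeping---expanding the concatenation product by multilinearity, using that $x_{\gamma_i}\in H_{\supp(\gamma_i)}\subset L(\Gamma(\supp(\gamma_i)))$ to force $c(\omega_i)=\supp(\gamma_i)$, and observing that prescribed block sizes single out the factorization $(\pi^{1},\dots,\pi^{r})$. Your care about the size constraint $\sum_i l(\gamma_i)=\vert\pi\vert$ and the uniqueness of the tiling is precisely the point that makes the corollary immediate, so there is nothing to add.
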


\begin{example}
Let $\tau = (134)(25) = \gamma_{1}\gamma_{2}$. We have $\supp(\gamma_{1}) = \{1,3,4\}$ and $\supp(\gamma_{2}) = \{2,5\}$. The decomposition of a word $\pi = \pi_{1}...\,\pi_{5}\in\Sym{5}$ associated to the cycle structure of $\tau$ is given by $\pi^{1} = \pi_{1}\pi_{2}\pi_{3}$ and $\pi^{2} = \pi_{4}\pi_{5}$.
\begin{itemize}
	\item For $\pi = 24351$, $(c(\pi^{1}), c(\pi^{2})) = (\{2,3,4\}, \{1,5\}) \neq (\supp(\gamma_{1}),\supp(\gamma_{2}))$, so  \[ x_{(134)(25)}(24351) = 0. \]
	\item For $\pi = 41352$, $(c(\pi^{1}), c(\pi^{2})) = (\{1,3,4\}, \{2,5\}) = (\supp(\gamma_{1}), \supp(\gamma_{2}))$, so  \[ x_{(134)(25)}(41352) = x_{(134)}(413)x_{(25)}(52). \]
\end{itemize}
\end{example}

The two relations given by theorems \ref{cycle-recurrence} and \ref{cycle-decomposition} encode the full structure of the wavelet basis, and allow to compute recursively any wavelet chain, from the wavelet chains of scale $2$. We do not have an analogous concept of the ``mother'' wavelet in our case because the operations involved in the computation of a wavelet chain vary at each stage, but these relations remain the base for many applications, such as the design of fast decomposition algorithms in the wavelet basis.

\section{Conclusion and perspectives}
\label{sec:perspectives}

Exploiting the powerful formalism of injective words, we developed the first general framework to perform data analysis on incomplete rankings in the present paper. Its cornerstone is the multiresolution decomposition of $L(\Sn)$ in function of the spaces $W_{A}$, that provides a decomposition of the space $\mathbb{M}_{\mathcal{A}}$ for any observation design $\mathcal{A}$. The explicit wavelet basis $\Psi$ adapted to this multiresolution decomposition is the key to use this framework in practice, allowing to perform linear or nonlinear approximation in any space $\mathbb{M}_{\mathcal{A}}$. It paves the way for many statistical applications, such as estimation of a ranking distribution or prediction of a ranking on a new subset of items, aggregation of many incomplete rankings into one full ranking or clustering of incomplete rankings. All these applications require the design of fast decomposition algorithms as well as the theoretical study of the properties of the wavelet basis regarding (nonlinear) approximation. This will be the subject of forthcoming articles. At last, another line of further research consists in trying to generalize the present framework to incomplete rankings which also allow ties.

\bibliographystyle{plain}

\bibliography{biblio}

\section{Appendix}

\subsection{Background on group theory}

A group is a set $G$ equipped with an associative operation $G^{2}\rightarrow G, (g,h) \mapsto gh$ and an element $e\in G$ such that for all $g\in G$, $ge = eg = g$ and there exists  $g^{-1}\in G$ such that $gg^{-1} = g^{-1}g = e$. The element $e$ is called the identity element, and $g^{-1}$, necessarily unique, is called the inverse of $g\in G$. The operation is not necessarily commutative. A subgroup of $G$ is a subset $H\subset G$ such that $e\in H$ and for all $(h,h^{\prime})\in H^{2}$, $hh^{\prime}\in H$. A left coset of a subgroup $H$ of $G$ is a subset (usually not a subgroup) of $G$ of the form $\{gh \;\vert\; h\in H\}$ with $g\in G$. A simple result states that for any subgroup $H$ of a finite group $G$, all the left cosets of $H$ have same cardinality $\vert H\vert$ and they constitute a partition of $G$.

An action of a group $G$ over a set $E$ is an operation $G\times E \rightarrow E, (g,x) \mapsto g\cdot x$ such that for all $(g,g^{\prime})\in G^{2}$ and $x\in E$, $e\cdot x = x$ and $g^{\prime}\cdot (g\cdot x) = (g^{\prime}g)\cdot x$. For $x\in E$, its orbit under the action of $G$ is the set $O_{x} = \{ g\cdot x \;\vert\; g\in G\}$, and its stabilizer is the subgroup of $G$ $\{ g\in G \;\vert\; g\cdot x = x\}$. A subset of $E$ is an orbit of $G$ if it is equal to a $O_{x}$. The collection of all the orbits of $G$ is a partition of $E$. The action of $G$ on $E$ is called transitive if it has only one orbit ($E$), \textit{i.e.} if for all $x\in E$, $O_{x} = E$.

A representation of a group $G$ is couple $(V,\rho)$ where $V$ is a linear space and $\rho$ a mapping $\rho : G \rightarrow GL(V)$, where $GL(V)$ is the group of invertible linear maps from $V$ to $V$, such that for all $(g,g^{\prime})\in G^{2}$, $\rho(gg^{\prime}) = \rho(g)\rho(g^{\prime})$. We speak indifferently of the representation $(V, \rho)$, the representation $\rho$ or the representation $V$. When $G$ acts transitively on a finite set $E$, there is a canonical representation of $G$ on $L(E)$, called the permutation representation, defined on the Dirac functions by $\rho(g)\delta_{x} = \delta_{g\cdot x}$, for $x\in E$. From an analytical point of view, the operators $\rho(g)$ are exactly the translations operators on $L(E)$ associated to the action of $G$, and besides, for all $f\in L(E)$, $g\in G$ and $x\in E$, $(\rho(g)f)(x) = f(g^{-1}\cdot x)$. When $E = G$, this representation is called the regular representation.

A representation $(V, \rho)$ of $G$ is called irreducible if $V\neq\{0\}$ and there is no subspace $W\subset V$ such that $\rho(g)(W)\subset W$ for all $g\in G$ other than $\{0\}$ and $V$. Two representations $(V_{1}, \rho_{1})$ and $(V_{2}, \rho_{2})$ of a group $G$ are isomorphic if there exists an isomorphism $\phi$ between $V_{1}$ and $V_{2}$ such that $\phi(\rho_{1}(g)v) = \rho_{2}(g)\phi(v)$ for all $g\in G$ and $v\in V$. Irreducible representations of a group are assimilated to their equivalence class of isomorphic representations.

A major result in the representation theory of finite groups is that the number of irreducible representations of a finite group $G$ is finite (actually equal to the number of conjugacy classes of $G$) and that any finite-dimensional representation $V$ of $G$ admits a decomposition as a direct sum of irreducible representations. The number of copies of one irreducible representation in this decomposition is called its multiplicity. The decomposition of the regular representation $L(G)$ involves all the irreducible representations of $G$, each appearing with multiplicity equal to its dimension. If $\text{Irr}(G)$ denotes the set of irreducible representations of $G$, then
\[
L(G) \cong \bigoplus_{W\in\text{Irr}(G)}d_{W}W,
\]
where for $W\in\text{Irr}(G)$, $d_{W} = \dim W$. See \cite{Diaconis1988} for more developments on group representation theory.

\subsection{Technical proofs}

\begin{proof}[Proof of lemma \ref{combinatorics}]
Let $(A,B)\in\mathcal{P}(\n)^{2}$ with $A\subset B$. The permutation group $\Sym{A}$ acts on $\Rank{A}$ and $\Rank{B}$. The mapping $r_{B,A}:\Rank{B}\rightarrow\Rank{A}, \sigma\mapsto\sigma_{\vert A}$, is equivariant for this action, \emph{i.e.}, for any $\tau\in\Sym{A}$ and $\sigma\in\Rank{B}$, $r_{B,A}(\tau\cdot\sigma)= \tau\cdot r_{B,A}(\sigma)$. The action being transitive on $\Rank{A}$, $r_{B,A}$ is surjective. Moreover, for $\pi\in\Rank{A}$, $\Rank{B}(\tau\cdot\pi) = r_{B,A}^{-1}(\{\tau\cdot\pi\}) = \tau\cdot r_{B,A}^{-1}(\{\pi\}) = \tau\cdot\Rank{B}(\pi)$. Consequently $|\Rank{B}(\pi)| = |\Rank{B}(\sigma\cdot\pi)|$, which, combined with $\Rank{B} = \sqcup_{\pi\in\Rank{A}}r_{B,A}^{-1}(\pi)$, gives the sought result.
\end{proof}

\begin{proof}[Proof of lemma \ref{commutation}]
Let $\omega\in\Gamma_{n}$, $a\in\n\setminus c(\omega)$ and $\pi\in\Gamma_{n}$. If $c(\pi)\cap c(\omega) \neq \emptyset$, then also $c(\varrho_{a}\pi)\cap c(\omega) \neq \emptyset$, and both $\mathfrak{i}_{\omega}\pi$ and  $\mathfrak{i}_{\omega}\varrho_{a}\pi$ are equal to $0$ by definition. If $c(\pi)\cap c(\omega) = \emptyset$, then $\mathfrak{i}_{\omega}\pi = \omega\pi$. Since $a\not\in c(\omega)$, it can only be deleted in the word $\omega\pi$ if it is deleted from $\pi$. This means that $\varrho_{a}\omega\pi = \omega\varrho_{a}\pi$, whether $a\in c(\pi)$ or not. We prove identically that $\varrho_{a}\mathfrak{j}_{\omega} = \mathfrak{j}_{\omega}\varrho_{a}$.
\end{proof}

\begin{proof}[Proof of proposition \ref{general wavelets}]
The proof of this proposition is a simple analysis of algorithm \ref{basis generation}. For a cycle $\gamma = (a_{1}\dots a_{k})$, the associated $x_{\gamma}$ is equal to an expression of the form $a_{1}\diamond \dots \diamond a_{k}$ with a particular way to put parentheses. When expanded, this expression gives $2^{k-1}$ terms with sign $+$ or $-$ between them. It could happen that some of the terms are the same and thus add or balance. But actually, for $x\in L(\Gamma(A))$ with $A\subset\n$, $1\leq \vert A\vert\leq n-1$ and $b\in\n\setminus A$, $\supp(x\diamond b) = \{\pi b \;\vert\; \pi\in\supp(x)\}\sqcup\{b\pi \;\vert\; \pi\in\supp(x)\}$. By recursion, we obtain that $\vert\supp(x_{\gamma})\vert = 2^{k-1}$, meaning also that all the terms in the expanded version of $a_{1}\diamond \dots \diamond a_{k}$ are different. Furthermore, for $x\in L(\Gamma(A))$ and $y\in L(\Gamma(B))$ with $A,B \subset\n$, $A,B\neq \emptyset$ and $A\cap B = \emptyset$, we have $\vert\supp(xy)\vert = \vert\supp(x)\vert \vert\supp(y)\vert$. Now, let $\tau = \gamma_{1}\dots\gamma_{r}$ be a permutation written in standard cycle form, with $\gamma_{i} = (a_{i,1}\dots a_{i,k_{i}})$. Then $x_{\tau} = (a_{1,1}\diamond \dots \diamond a_{1,k_{1}})\dots(a_{r,1}\diamond \dots \diamond a_{r,k_{r}})$, and this expression expands in $2^{k_{1}-1}\dots2^{k_{r}-1} = 2^{k-r}$ different terms. This shows both that $\vert\supp(x_{\tau})\vert = 2^{k-r}$ and that $x_{\tau}$ takes its values in $\{-1,0,1\}$. Applying $\phi_{n}$ concludes the proof.
\end{proof}

\begin{proof}[Proof of lemma \ref{cardinality-intersection}]
Let $(\pi,\pi^{\prime})\in(\Gamma_{n})^{2}$, and $\pi^{0}$ be the subword of $\pi$ with content $c(\pi)\cap c(\pi^{\prime})$. We denote by $\vert\pi\vert = k$, $\vert\pi^{\prime}\vert = l$ and $\vert c(\pi)\cap c(\pi^{\prime})\vert = m$. By definition, $\Sn[\pi]\cap\Sn(\pi^{\prime}) = \{\sigma\in\Sn \;\vert\; \sigma$ admits $\pi$ as a contiguous subword and $\pi^{\prime}$ as a subword$\}$. If $\pi^{0}$ is not a contiguous subword of $\pi^{\prime}$, then there exist a subword $\pi^{\ast}$ of $\pi^{0}$ which is a contiguous subword of $\pi$, $a\in c(\pi^{\prime})\setminus c(\pi^{0})$ and $i\in\{2, \dots, m\}$ such that $\pi^{\ast}\triangleleft_{i}a$ is a subword of $\pi^{\prime}$. So if $\sigma\in\Sn[\pi]\cap\Sn(\pi^{\prime})$, $\sigma$ admits \textit{a fortiori} $\pi^{\ast}$ as a contiguous subword and $\pi^{\ast}\triangleleft_{i}a$ as a subword, which is not possible. Hence,  $\vert\Sn[\pi]\cap\Sn(\pi^{\prime})\vert = 0$ in this case. We now assume that $\pi^{0}$ is a contiguous subword of $\pi^{\prime}$. Let $i\in\{1, \dots, l\}$ such that $\pi_{i}^{\prime}\dots\pi_{i+m-1}^{\prime} = \pi^{0}$. Then each element of $\Sn[\pi]\cap\Sn(\pi^{\prime})$ can be seen as a way of filling the blanks denoted by $\underline{\phantom{aaaaa}}$ with all the elements of $\n\setminus (c(\pi)\cup c(\pi^{\prime}))$, in the following figure.
\[
\underline{\phantom{aaaaa}}\ \pi_{1}^{\prime}\ \underline{\phantom{aaaaa}}\ \dots\ \underline{\phantom{aaaaa}}\ \pi_{i-1}^{\prime}\ \underline{\phantom{aaaaa}}\ \pi\ \underline{\phantom{aaaaa}}\ \pi_{i+m}^{\prime}\ \dots \underline{\phantom{aaaaa}}\ \pi_{l}^{\prime}\ \underline{\phantom{aaaaa}}.
\]
If we do not take the order of the elements into account, the number of such fillings is equal to the number of ways of putting $n-(k+l-m)$ indistinguishable balls (the elements of $\n\setminus (c(\pi)\cup c(\pi^{\prime}))$) into $l-m+2$ boxes (the blanks). From a classic result in combinatorics, this number is equal to
\[
\binom{(n-(k+l-m))+(l-m+2)-1}{(l-m+2)-1} = \binom{n-k+1}{l-m+1}.
\]
Now, to take the order into account, we have to multiply by the number of possible reorderings of the filling elements, equal to $(n-(k+l-m))!$. The final result is thus
\[
\binom{n-k+1}{l-m+1}(n-(k+l-m))! = \frac{(n-k+1)!}{(l-m+1)!}.
\]
\end{proof}

\begin{proof}[Proof of lemma \ref{cycles-combinatorics}]
Let $A = \{a_{1}, \dots, a_{k}\}\subset\n$ with $k\in\{1, \dots, n-1\}$ and $b\not\in A$. For $j\in\{1, \dots, k\}$, $\gamma = (a_{1}\, \dots\, a_{k})$  and $\tau = (a_{1}\, \dots\, a_{k})(a_{j}\, b)$,  we have
\begin{align*}
\tau(a_{i}) &= \gamma(a_{i}) = a_{i+1} \qquad\text{for } i\in\{1, \dots, k\}\setminus\{j\} \text{ with } a_{k+1} = a_{1} \text{ by convention,} \\
\tau(a_{j}) &= \gamma(b) = b,\\
\tau(b) &= \gamma(a_{j}) = a_{j+1},\\
\tau (a^{\prime}) &= \gamma(a^{\prime}) = a^{\prime} \ \ \ \qquad\text{ for all } a^{\prime}\not\in A\cup\{b\}.
\end{align*}
Hence $\tau = (a_{1}\,\dots\,a_{j}\,b\,a_{j+1}\,\dots\,a_{k})$. This proves $1.$ and at the same time that $\{ \gamma\cdot(a_{j}\, b) \;\vert\; j\in\{ 1, \dots, k\}, \gamma\in\Cycle(A)\} \subset \Cycle(A\cup\{b\})$. Now, let $\gamma\in\Cycle(A\cup\{b\})$, $a^{\ast} = \gamma^{-1}(b)$ and $\gamma^{\prime}\in\Cycle(A)$ be the cycle obtained when deleting $b$ in $\gamma$. Then by $1.$, $\gamma = \gamma^{\prime}\cdot(a^{\ast}\, b)$. This concludes the proof.
\end{proof}

\end{document}